 \numberwithin{equation}{section}
\theoremstyle{nonumberplain}  
\newtheorem{proof}{Proof} 
\theoremstyle{plain}  
\newtheorem{prop}{Proposition}[section]  
\newtheorem{corollary}[prop]{Corollary}  
\newtheorem{lemma}[prop]{Lemma}  
\newtheorem{theorem}[prop]{Theorem}   
\newtheorem{remark}[prop]{Remark}
\newtheorem{example}[prop]{Example}  
\newtheorem{definition}[prop]{Definition} 
\newtheorem{notation}[prop]{Notation} 
\theoremstyle{nonumberplain}
\newcommand*{\grad}{\operatorname{grad}}
\newcommand{\R}{\mathbb{R}}
\newcommand{\Eh}{\mathcal{E}}
\newcommand{\N}{\mathbb{N}}
\newcommand{\C}{\mathbb{C}}
\newcommand{\dd}{\mathrm{d}}
\newcommand{\End}{\mathrm{End}}
\renewcommand*{\div}{\operatorname{div}}
\newcommand{\id}{\mathrm{id}}
\newcommand{\Cinf}{C^\infty}
\newcommand{\m}{\mathfrak{m}}
\newcommand{\DD}{\mathrm{D}}
\newcommand{\V}{\mathcal{V}}
\newcommand{\im}{\mathrm{im}}
\renewcommand{\Re}{\mathrm{Re}}
\newcommand{\X}{\mathfrak{X}}
\newcommand*\circled[1]{\tikz[baseline=(char.base)]{
            \node[shape=circle,draw,inner sep=2pt] (char) {#1};}}
\title{Vector Fields with a non-degenerate Source}
\author{Matthias Ludewig}
\begin{document}

\maketitle

\begin{center}
  Universität Potsdam / Institut für Mathematik \\ 
  Am Neuen Palais 10 / 14469 Potsdam, Germany \\ \medskip
 matthias.ludewig@uni-potsdam.de 
\end{center}

\vspace{0.5cm}

\begin{abstract}
\noindent We discuss the solution theory of operators of the form $\nabla_X + A$, acting on smooth sections of a vector bundle with connection $\nabla$ over a manifold $M$, where $X$ is a vector field having a critical point with positive linearization at some point $p \in M$. As an operator on a suitable space of smooth sections $\Gamma^\infty(U, \V)$, it fulfills a Fredholm alternative, and the same is true for the adjoint operator. Furthermore, we show that the solutions depend smoothly on the data $\nabla$, $X$ and $A$.
\end{abstract}


\section{Introduction}

Let $M$ be a manifold of dimension $n$ and $X$ be a smooth vector field on $M$. We consider points $p \in M$, where the vector field has what we will call a {\em strictly positive source}, meaning that $X(p) = 0$ and all eigenvalues of the linearization of $X$ at $p$  have strictly positive real part.

Let furthermore $\V$ be a real or complex vector bundle over $M$ endowed with a connection $\nabla$ and some given endomorphism field $A$, i.e.\ a smooth section of the bundle $\End(\V)$. In this paper, we discuss properties of the differential operator $\nabla_X + A$, where we assume that $X$ has a strictly positive source at $p \in M$, as explained above. The goal is to solve differential equations of the form
\begin{equation} \label{TheEquation0}
  (\nabla_X + A)u = \lambda u + v.
\end{equation}
In the case that $M \subseteq \R^n$ is open and $A$ is just a matrix-valued function, we may assume that $p = 0$ and the eigenvalue equation $(\nabla_X+A)u=v$ is equivalent to the system
\begin{equation}
  X^i(y) \frac{\partial}{\partial y^i}u^k(y) + A^k_j(y) u^j(y) = v^k(y), ~~~~~~~ k = 1, \dots, m
\end{equation}
of scalar first order equations, where the functions $X^i$ vanish at zero and the real part of each of the eigenvalues of the matrix $(\DD_j X^i)_{ij}$ is positive.

Usually, first order equations can be easily solved with the method of characteristics, but the singular nature of the operator does not admit this approach near the critical point of $X$. In fact, we will see that operators of this type have some striking analogies to elliptic operators;
we show in particular that as an operator on a suitable space of smooth sections $\Gamma^\infty(U, \V)$, the operator fulfills a Fredholm alternative, and the same is true for the dual operator on the space of distributions $\Eh^\prime(U, \V^*)$. Furthermore, the solutions depend smoothly on the data and the solution operator is a smooth map between suitable Fréchet spaces.

Operators of the above form appear in various situations in geometry and mathematical physics. For example, the so-called recursive transport equations that appear in the construction of the asymptotic expansion of the heat kernel on a Riemannian manifold or the Hadamard solution associated to a d'Alembert operator on a Lorentzian manifold are of the form \eqref{TheEquation0}. Also, the transport equations one has to solve in semiclassical analysis when constructing formal WKB expansions to Schrödinger operators $\hbar^2 \Delta + V$ near critical points of the potential $V$ have the form \eqref{TheEquation0}. We discuss these examples in section \ref{section4}.

The study of differential equations of this form has some history in the theory of WKB approximations in semiclassical analysis (see e.g.\ \cite{helffersjostrand84}, \cite{helffer88} and \cite{dimassisjoestrand99}). Most of the results of this paper are therefore not particularly new but presented in a more are general and conceptual form, involving in particular a vector-valued geometric setting. In particular the formulation of Thm.\ \eqref{ThmFredholm} seems more clear and less {\em ad hoc} than the corresponding statements available in the literature. The use of the more refined estimates from Thm.\ \ref{ThmODESolutionEstimate} to get smooth dependence on initial data (Thm.\ \ref{SmoothDependence}) seems to be new.

 We use many of the ideas from the cited references, but we will need to adapt the proofs to fit the more general setting we shall discuss.

\medskip

{\bfseries Acknowledgement.} Thanks to Florian Hanisch, Christoph Stephan, Christian Bär and Rafe Mazzeo for many helpful discussions. I am also indebted to Robert Bryant and Alberto Abbondandolo, who both provided great help via mathoverflow.net. Last but not least, I want to thank Potsdam Graduate School (POGS) for financial support.


\section{Outline of the Results} \label{section1_2}

\begin{definition}
Let $M$ be a manifold of dimension $n$ and $X$ be a smooth vector field on $M$. A point $p \in M$  with $X(p)=0$ is called {\em strictly positive source} of $X$ if the real parts of all eigenvalues of the linearization $\nabla X|_p \in \End(T_pM)$ are strictly positive.
\end{definition}

Whenever we speak of eigenvalues of an endomorphism of a finite-dimensional vector space, we always mean the roots of the characteristic polynomial, counted with algebraic multiplicity. In the definition above, $\nabla$ is any connection on $TM$; because $X(p) = 0$, the linearization $\nabla X|_p$ is independent of the choice of connection.

\begin{definition} \label{DefStarshaped}
If $X$ has a strictly positive source at $p \in M$, an open neighborhood $U$ of $p$ is called {\em star-shaped} around $p$ with respect to $X$, if for all $q \in U$, the flow $\Phi_t(q)$ of $X$ exists for all $t\leq 0$ with
\begin{equation*}
  \lim_{t \longrightarrow - \infty} \Phi_t(q) = p,
\end{equation*}
and furthermore $\Phi_t(U) \subseteq U$ for all $t\leq 0$. 
The stable manifold theorem (see for example \citep[p.\ 116]{perko91}) guarantees the existence of star-shaped neighborhoods around $p$.
\end{definition}

For open subsets $U \subseteq M$, we equip the space of sections $\Gamma^\infty(U, \V)$ with its Fréchet topology (induced by the $C^m$ norms on compact subsets of $U$). The dual space is then $\Eh^\prime(U, \V^*)$, the space of compactly supported distributions with values in $\V^*$. 

\begin{theorem}[Fredholm Alternative] \label{ThmFredholm}
Let $X$ be a vector field on $M$ with a strictly positive source at $p$ and let $U$ be star-shaped around $p$ with respect to $X$. Consider the operator $\nabla_X + A$ as a bounded linear operator on $\Gamma^\infty(U, \V)$, as well as the dual operator $(\nabla_X + A)^\prime$ on the space $\Eh^\prime(U, \V^*)$.

Then either
\begin{enumerate}
\item[a)] $\lambda$ is not an eigenvalue of $\nabla_X + A$ and the inhomogeneous equation
\begin{equation} \label{InhomogeneousEquation}
 (\nabla_X + A)u = \lambda u + v
\end{equation} 
has exactly one solution for each $v \in \Gamma^\infty(U, \V)$; or
\item[b)] both the homogeneous equation
\begin{equation} \label{HomogeneousEquation}
  (\nabla_X + A)u = \lambda u, ~~~~~~~ u \in \Gamma^\infty(U, \V)
\end{equation}
and the dual equation
\begin{equation} \label{DualEquation}
  (\nabla_X + A)^\prime T = \lambda T, ~~~~~~~ T \in \Eh^\prime(U, \V^*)
\end{equation}
have $k < \infty$ linearly independent solutions. Then the inhomogeneous equation \eqref{InhomogeneousEquation} has a solution if and only if $v \in \ker ((\nabla_X + A)^\prime- \lambda)_\perp$. In this case, the space of solutions is $k$-dimensional affine subspace of $\Gamma^\infty(U, \V)$, the direction of which is the space of solutions to \eqref{HomogeneousEquation}.
\end{enumerate}
\end{theorem}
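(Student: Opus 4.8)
The plan is to reduce the Fredholm alternative to a statement about a single compact operator, in the spirit of the Riesz–Schauder theory. First I would establish, via the method of characteristics along the backward flow $\Phi_t$, an explicit solution formula for the inhomogeneous equation $(\nabla_X+A)u = \lambda u + v$ on a star-shaped neighbourhood $U$: writing $P_t(q)\in\mathrm{Hom}(\V_{\Phi_t(q)},\V_q)$ for the parallel transport along the flow line combined with the endomorphism $A$, one is led to the candidate
\begin{equation*}
  (G_\lambda v)(q) = -\int_{-\infty}^0 e^{-\lambda t}\, P_t(q)\, v(\Phi_t(q))\, \dd t,
\end{equation*}
and the key analytic input — which I expect to be supplied by the earlier estimates, in particular Thm.\ \ref{ThmODESolutionEstimate} — is that this integral converges and defines a \emph{bounded} operator on $\Gamma^\infty(U,\V)$, with all $C^m$ seminorms controlled; here the positivity of the source (all eigenvalues of $\nabla X|_p$ have positive real part) is precisely what forces exponential contraction of the flow towards $p$ and hence convergence, after possibly shrinking $U$ and restricting to $\Re\lambda$ in a half-plane. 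For general $\lambda$ one argues that $G_\lambda$, if it exists as a bounded operator, is a genuine two-sided inverse of $(\nabla_X+A-\lambda)$ on $\Gamma^\infty(U,\V)$, so that option (a) holds whenever $\lambda$ is not an eigenvalue.

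Second, I would exhibit the spectral structure by writing, for a fixed reference value $\mu$ in the convergence half-plane, the identity
\begin{equation*}
  (\nabla_X + A - \lambda) = (\mu - \lambda)\Bigl(\tfrac{1}{\mu-\lambda}\,\mathrm{id} - G_\mu\Bigr)(\nabla_X+A-\mu),
\end{equation*}
valid since $G_\mu$ inverts $(\nabla_X+A-\mu)$; because $(\nabla_X+A-\mu)$ is invertible, solvability of the $\lambda$-equation is equivalent to solvability of $(z\,\mathrm{id} - G_\mu)w = \tilde v$ with $z = 1/(\mu-\lambda)$. So the whole theorem follows from the Riesz theory of the operator $G_\mu$ on the Fréchet space $\Gamma^\infty(U,\V)$, \emph{provided} $G_\mu$ is compact. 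To get compactness I would use that $G_\mu$ involves an integral over $t\in(-\infty,0]$ of values pulled back by $\Phi_t$, which for $t\to-\infty$ concentrates near $p$ with exponentially small weight; combined with the smoothing effect of integration in $t$ and the Arzelà–Ascoli/Montel property of $\Gamma^\infty$, one shows $G_\mu$ maps bounded sets to relatively compact sets. This compactness (for a nuclear-type operator on a Fréchet space, Riesz theory still applies) is the step I expect to be the main obstacle, since one must control the loss of derivatives: differentiating $G_\mu v$ in $q$ differentiates $\Phi_t(q)$, whose derivatives grow like $e^{Ct}$ as $t\to-\infty$ with $C$ related to the \emph{largest} eigenvalue, and one needs the $e^{-\mu t}$ and the contraction to still win — this is exactly where the refined weighted estimates of Thm.\ \ref{ThmODESolutionEstimate} are needed rather than the crude ones.

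Third, with $G_\mu$ compact, the standard Riesz–Schauder dichotomy gives: either $z\,\mathrm{id} - G_\mu$ is invertible (case (a)), or $\ker(z\,\mathrm{id}-G_\mu)$ is finite-dimensional of some dimension $k<\infty$, equals the dimension of $\ker(z\,\mathrm{id}-G_\mu')$ on the dual space, and the range of $z\,\mathrm{id}-G_\mu$ is the annihilator of $\ker(z\,\mathrm{id}-G_\mu')$ (case (b)). Transporting this back through the invertible factor $(\nabla_X+A-\mu)$ and its transpose identifies $\ker(z\,\mathrm{id}-G_\mu)$ with the solution space of the homogeneous equation \eqref{HomogeneousEquation}, identifies $\ker(z\,\mathrm{id}-G_\mu')$ with the solution space of the dual equation \eqref{DualEquation} (here one checks that $G_\mu' = (\nabla_X+A-\mu)'^{-1}$ in a suitable sense on $\Eh'(U,\V^*)$, so the transpose of the Riesz decomposition is exactly the statement about $(\nabla_X+A)'$), and shows the affine solution set of \eqref{InhomogeneousEquation} is a translate of $\ker(\nabla_X+A-\lambda)$, with solvability obstruction exactly $v\in\ker((\nabla_X+A)'-\lambda)_\perp$. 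The only genuinely new bookkeeping beyond Riesz–Schauder is matching the dual picture on compactly supported distributions with the transpose of $G_\mu$, which is routine once duality of $\Gamma^\infty(U,\V)$ with $\Eh'(U,\V^*)$ is invoked.
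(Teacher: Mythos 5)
Your overall architecture (explicit resolvent $G_\mu$ along the backward flow, then Riesz--Schauder for the compact operator $G_\mu$) is quite different from the paper's argument, which never invokes compactness: the paper reduces everything to finite-dimensional linear algebra on the jet spaces $\mathcal{P}_N = \Cinf(U,V)/\m^{N+1}$ via the two Correspondence Theorems \ref{ThmCorrespondence} and \ref{ThmCorrespondence2}, whose engine is the flat-solution Theorem \ref{ThmFlat} together with a recursive power-series construction and Borel's theorem. Unfortunately, your route has a genuine gap exactly at the step you flag as the main obstacle, and I do not think it can be closed. The criterion you propose for compactness --- that $G_\mu$ maps bounded sets to relatively compact sets, via the Montel property of $\Gamma^\infty(U,\V)$ --- is vacuous: on a Fr\'echet--Montel space \emph{every} continuous operator, including the identity, maps bounded sets into relatively compact sets, and the identity certainly does not obey the Riesz alternative. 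The Riesz theory on a non-normable Fr\'echet space requires the genuinely stronger property that $G_\mu$ map some neighbourhood of $0$ (which only controls finitely many derivatives on one compact set) into a relatively compact, hence $C^\infty$-bounded, set; equivalently $G_\mu$ would have to gain infinitely many derivatives. It does not: $G_\mu$ is the inverse of a first-order operator and gains regularity only along the flow lines of $X$, not transversally, so $\|G_\mu v\|_{C^{m+1}(K)}$ cannot be controlled by $\|v\|_{C^{m}(K)}$. Theorem \ref{ThmODESolutionEstimate} gives $C^m\to C^m$ bounds, which is exactly what the paper needs and exactly not enough for compactness.

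There is also a structural reason to distrust any compact-operator argument here: the paper's Example \ref{ExampleLowRegularity} shows that the eigenspaces of $\nabla_X+A$ genuinely depend on the regularity class (there are additional $C^1$ eigenfunctions that are not smooth), so one cannot retreat to a Banach space $C^m(K)$ where compactness might be easier to arrange --- the resulting spectral theory would be a different one. What does survive from your proposal is the first step: the integral formula for $G_\mu$ with $\Re\mu < \min\Re\,\mathrm{spec}\,A(p)$ is precisely the paper's formula \eqref{TheSolution2}, used there to prove smooth dependence on the data (Lemma \ref{LemmaSmoothDependence}), and your factorization of $(\nabla_X+A-\lambda)$ through $(\nabla_X+A-\mu)$ is correct as far as it goes. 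To finish, however, you need a substitute for compactness; the paper's substitute is the splitting $\Cinf(U,V)=\mathcal{P}_N\oplus\m^{N+1}$, on which the operator is lower-triangular, invertible on the flat part $\m^{N+1}$ by Theorem \ref{ThmFlat}, and an honest finite-dimensional endomorphism on $\mathcal{P}_N$, where the equality of the dimensions of the left and right $\lambda$-eigenspaces and the solvability criterion $v\in\ker((\nabla_X+A)^\prime-\lambda)_\perp$ are elementary linear algebra.
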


 In the theorem,
\begin{equation*}
  \ker ((\nabla_X + A)^\prime- \lambda)_\perp := \{ u \in \Gamma^\infty(U, \V) \mid T(u) = 0 ~~ \forall ~~ T \in \ker ((\nabla_X + A)^\prime - \lambda) \}.
\end{equation*}
We will give a proof in Section \ref{section3_5}.
A similar theorem holds for the dual operator $(\nabla_X + A)^\prime$, compare Cor. \ref{ThmFredholmDual}.  Results similar to Thm.\ \ref{ThmFredholm} appear in \citep{dimassisjoestrand99} and partly in \citep[Thm.\ 2.3.1]{helffer88}, but in a different form. 

In this sense, operators of the form $\nabla_X + A$ behave similar to elliptic operators. There is no analog to elliptic regularity however, and indeed there may be additional non-smooth solutions of \eqref{HomogeneousEquation} (see Example \ref{ExampleLowRegularity}). For this reason, it is suitable to consider the spaces of smooth sections as opposed to some Banach or Hilbert space setting one usually considers.

\begin{theorem}[Eigenvalues]\label{ThmEigenvalues}
Assume that $\V$ is a complex vector bundle of rank $m$. A number $\lambda \in \C$ is an eigenvalue of the operator $\nabla_X + A$ (i.e.\ the homogeneous equation \eqref{HomogeneousEquation} has a solution), if and only if it has the form
\begin{equation} \label{TheEigenvalue}
  \lambda = \alpha_1 \mu_1 + \dots + \alpha_n \mu_n + \rho_j, 
\end{equation}
for some multi-index $\alpha \in \N_0^n$ and a number $1 \leq j \leq m$, where $\mu_1, \dots \mu_n$ are the eigenvalues of the linearization $\nabla X|_p$ of $X$ and $\rho_1, \dots, \rho_m$ are the eigenvalues of $A(p)$, each repeated with multiplicity. Furthermore, in this case we have the inequality
\begin{equation*}
  1 \leq \dim \ker (\nabla_X + A) \leq m(\lambda),
\end{equation*}
where $m(\lambda)$ is the the number of ways to write $\lambda$ in the above form for different eigenvalues $\rho$ of $A(p)$ and multi-indices $\alpha$.
\end{theorem}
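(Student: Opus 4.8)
The plan is to reduce the eigenvalue problem to a local model at $p$ and to analyze the action of $\nabla_X + A$ on (formal) Taylor expansions at the source. First I would observe that by Theorem~\ref{ThmFredholm} it suffices to work in an arbitrarily small star-shaped neighborhood $U$ of $p$; shrinking $U$ and trivializing $\V$ over $U$ by parallel transport along the flow lines of $X$ (which is possible because every flow line limits to $p$), the connection term can be absorbed and the operator becomes $X + A$ acting on $\C^m$-valued functions, with $X$ a vector field vanishing at $p=0$ whose linearization $L = \nabla X|_p$ has eigenvalues $\mu_1,\dots,\mu_n$ of positive real part, and $A(0)$ having eigenvalues $\rho_1,\dots,\rho_m$. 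The key structural fact is that the operator preserves the filtration of $C^\infty(U,\C^m)$ by order of vanishing at $p$: on the space $P_N$ of $\C^m$-valued homogeneous polynomials of degree $N$ (or on the associated graded $\m^N/\m^{N+1} \otimes \C^m$), the operator $X + A$ acts as $L^{(N)} + A(0)$, where $L^{(N)}$ is the Lie-derivative action of the linear vector field $y \mapsto Ly$ on degree-$N$ polynomials. A standard computation shows that the eigenvalues of $L^{(N)}$ are exactly the numbers $\alpha_1\mu_1 + \dots + \alpha_n\mu_n$ with $|\alpha| = N$ (diagonalize $L$; then monomials are eigenvectors). Hence the eigenvalues of $X+A$ acting on $\m^N/\m^{N+1}\otimes\C^m$ are precisely the numbers of the form \eqref{TheEigenvalue} with $|\alpha|=N$.

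For the ``only if'' direction: if $(\nabla_X+A)u = \lambda u$ with $u \not\equiv 0$, let $N$ be the order of vanishing of $u$ at $p$ (this is finite, since a smooth solution vanishing to infinite order at $p$ must vanish identically — this follows by integrating along flow lines backward in time, using that $u$ decays faster than any power of the flow-distance while the homogeneous solution is determined by its germ at $p$; alternatively it follows from the uniqueness part of Theorem~\ref{ThmFredholm} applied on smaller neighborhoods). Then the degree-$N$ part $u_N \in \m^N/\m^{N+1}\otimes\C^m$ is a nonzero eigenvector of $L^{(N)} + A(0)$ with eigenvalue $\lambda$, so $\lambda$ has the form \eqref{TheEigenvalue} with $|\alpha| = N$. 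For the ``if'' direction: given $\lambda = \alpha\cdot\mu + \rho_j$, one first solves the equation formally order by order. At order $N = |\alpha|$ the operator $L^{(N)} + A(0) - \lambda$ has nontrivial kernel, so we can pick a nonzero $u_N$ in it; then at each higher order $N' > N$ we must solve $(L^{(N')} + A(0) - \lambda) u_{N'} = (\text{lower-order data})$, which is solvable provided $\lambda$ is \emph{not} an eigenvalue of $L^{(N')}+A(0)$, i.e.\ not of the form \eqref{TheEigenvalue} with $|\alpha'| = N'$. Since all $\mu_i$ have positive real part, $\Re(\alpha'\cdot\mu)$ grows without bound as $|\alpha'|\to\infty$, so this obstruction occurs for only finitely many $N'$; at those finitely many resonant orders one has to check that the right-hand side lies in the range, which is the genuinely delicate bookkeeping point. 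Having produced a formal power series solution $\hat u$, Borel summation gives a smooth $\tilde u$ with that Taylor expansion, and then $(\nabla_X+A - \lambda)\tilde u =: w$ vanishes to infinite order at $p$; by Theorem~\ref{ThmFredholm} (the inhomogeneous equation is solvable when the right-hand side is $\perp$ to the finite-dimensional dual kernel, and $w$, vanishing to infinite order, can be arranged or corrected to satisfy this) we solve $(\nabla_X+A-\lambda)r = w$, and $u := \tilde u - r$ is the desired honest eigenfunction, nonzero because its Taylor expansion is $\hat u \neq 0$.

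The dimension bounds come from the same analysis. The lower bound $\dim\ker \geq 1$ is immediate once existence is established. For the upper bound, the map sending an eigenfunction to its leading Taylor coefficient $u_N$ is injective on each graded piece (again by the infinite-order-vanishing implies zero principle), so $\dim\ker(\nabla_X+A-\lambda)$ is bounded by $\sum_N \dim\ker(L^{(N)}+A(0)-\lambda)$ over the finitely many resonant $N$; and for each such $N$, the eigenspace of $L^{(N)}+A(0)$ for $\lambda$ has dimension at most the number of pairs $(\alpha,j)$ with $|\alpha|=N$ and $\alpha\cdot\mu + \rho_j = \lambda$ (when $L$ and $A(0)$ are simultaneously triangularizable this is an equality for the generic configuration; in general the algebraic eigenspace has exactly this dimension and the geometric one is no larger). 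Summing over resonant $N$ gives exactly $m(\lambda)$, the total number of representations \eqref{TheEigenvalue}.

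The main obstacle I anticipate is the resonant orders: at those finitely many $N'$ where $\lambda$ \emph{is} an eigenvalue of $L^{(N')}+A(0)$, one must show the accumulated right-hand side actually lies in the image, and control the resulting ambiguity, in order to simultaneously (i) guarantee a formal solution exists and (ii) pin down the exact dimension count rather than just an inequality. This requires carefully tracking how the iteration feeds lower-order coefficients into higher orders, and is where the possibility of strict inequality $\dim\ker < m(\lambda)$ enters — it happens precisely when a resonant right-hand side fails to be in the range of $L^{(N')}+A(0)-\lambda$. The rest (trivializing by parallel transport, the infinite-order-vanishing uniqueness, Borel summation) is routine given the tools already set up.
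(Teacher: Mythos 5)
Your reduction to the graded action on $\m^N/\m^{N+1}$, the computation of its spectrum, and the ``only if'' direction (leading Taylor coefficient of an eigenfunction, plus the fact that a flat eigenfunction vanishes) all match the paper's route via Lemma \ref{LemmaEigenvalues} and Theorem \ref{ThmFlat}. But the ``if'' direction --- and with it the lower bound $\dim\ker\geq 1$ --- has a genuine gap, and it is exactly the point you flag as ``the main obstacle I anticipate'': in your order-by-order scheme you pick a nonzero $u_N$ in $\ker(L^{(N)}+A(0)-\lambda)$ and then must push it through the finitely many higher resonant orders $N'$, where solvability of $(L^{(N')}+A(0)-\lambda)u_{N'}=(\text{lower-order data})$ is not automatic; you never show that some admissible choice of leading data makes all these steps solvable. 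The paper sidesteps this entirely: it works on the whole truncated space $\mathcal{P}_N=C^\infty(U,V)/\m^{N+1}$ with $N$ chosen \emph{past all resonances}. There $[\DD_X+A]_N$ is a single finite-dimensional block-lower-triangular operator whose characteristic polynomial has $\lambda$ as a root of multiplicity $m(\lambda)\geq 1$, so a nonzero eigenvector $\boldsymbol{u}\in\mathcal{P}_N$ exists by plain linear algebra (geometric multiplicity of an actual eigenvalue is at least one) --- no graded-piece-by-graded-piece solvability check is needed. The correspondence theorem then extends $\boldsymbol{u}$ to a genuine eigenfunction, because beyond order $N$ the operators $[\DD_X+A]_{N+k}-\lambda$ on $\mathcal{H}_{N+k}$ are all invertible and the recursion is unobstructed. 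Your upper bound via the associated graded is fine and is essentially the paper's ``geometric $\leq$ algebraic multiplicity on $\mathcal{P}_N$'' argument.

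Two further points on the final correction step. First, after Borel summation you need the error $w=(\nabla_X+A-\lambda)\tilde u$ (which is flat) to be removable by a \emph{flat} correction $r$; otherwise $u=\tilde u-r$ need not have Taylor series $\hat u$ and could in principle vanish. What is required is precisely the paper's Theorem \ref{ThmFlat} (unique flat solution for flat right-hand side, with $\lambda$ absorbed into $A$), not Theorem \ref{ThmFredholm}. Second, invoking Theorem \ref{ThmFredholm} here is circular in the paper's logical order: its proof rests on the same correspondence machinery that underlies the statement you are proving. Your primary, direct argument for ``flat eigenfunction $\Rightarrow$ zero'' (integrating backward along flow lines and comparing decay rates) is the right one and is what the paper makes precise with the flow estimates of Section \ref{section1_5} and Lemma \ref{FlatSolutionsEstimate}; you should rely on that rather than on Theorem \ref{ThmFredholm}.
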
 

\begin{remark} \label{RemarkOnMlambda}
In general, there need not be $m(\lambda)$ eigenfunctions to a given eigenvalue $\lambda$. This "lack of eigenfunctions" can have two reasons: First, $\DD X|_0$ or $A(p)$ need not be diagonalizable, i.e.\ a number $\rho$ or $\mu_j$ may be a root of order $k$ of the characteristic polynomial of $A(p)$ or $\nabla X|_p$ at $p$, but only admit $l<k$ linearly independent eigenfunctions. In this case, we "loose" eigenfunctions of $\nabla_X + A$. However, even if both these endomorphisms are diagonalizable, there is no guarantee to have $m(\lambda)$ eigenfunctions, unless the vector field fulfills some additional conditions, which are discussed in section \ref{section4}.
\end{remark}

Thm.\ \ref{ThmEigenvalues} is proved in Section \ref{section3}. In Section \ref{sectionsmooth}, we furthermore show that the solutions to \eqref{InhomogeneousEquation} depend smoothly on the data $X$, $A$, $v$ and the connection $\nabla$ (at least in the case that $\nabla_X + A$ has trivial kernel). For this, we need some estimates on the flow of the vector field $X$, which are established in Section \ref{section1_5}. Parts of these estimates are needed in the other sections as well. In Section \ref{section4}, we apply the results obtained so far to some particular problems and give further discussion.

\begin{remark}
Clearly, all results can as well be applied to the case that a vector field $X$ singular at a point $p \in M$ has a negative definite linearization $\nabla X|_p$. One only needs to replace $X$ by $-X$. Furthermore, if $X$ is a vector field with a hyperbolic critical point at $p$, one can apply the theorems above to both the stable and the unstable manifold corresponding to the singularity.
\end{remark}


\section{Estimates} \label{section1_5}

In this section, we establish some needed estimates on the flow of vector fields and the solutions to linear equations. All these results should be well-known; however, no definite references seem to exist quite in the form needed. We divested the technical lemmas to the appendix in order keep the section brief.

\begin{notation} \label{NotationXU}
Let $U \subseteq M$ be an open subset. We denote by $\X_U$ set of all vector fields $X$ that have a strictly positive source at some point $p \in U$ (that may depend on $X$) such that $U$ is star-shaped around $p$ with respect to $X$. For $X \in \X_U$, the flow $\Phi_t(y) =\Phi_t^X(y)$ exists for all $t \leq 0$ and $y \in U$, and depends smoothly on $X$ (for $X$ in the interior of $\X_U$). 
\end{notation}

Let $U \subseteq \R^n$ be open and let $V$ be a finite-dimensional real or complex vector space (we assume that we have chosen a norm, but the results do not really depend on this choice, as all norms are equivalent). Choose $A \in C^\infty(U, \End(V))$ and $X \in \X_U$. The parametrized initial value problem
  \begin{equation} \label{InitialValueProblem23}
    \frac{\partial}{\partial t}E_{X,A}(t, y) = A(\Phi_t^X(y))E_{X,A}(t, y), ~~~~~~~ E_{X, A}(0, y) = \id
  \end{equation}
has a unique solution $E_{X, A} \in C^\infty(\R \times U, \End(V))$, defined for all $t\leq 0$ and $y \in U$. $E_{X, A}$ depends smoothly on $A$, $X$ and $y$, $t$.

\begin{theorem} \label{ThmODESolutionEstimate}
Let $X \in \X_U$ be a vector field with a strictly positive source $p \in U$, and let $A \in C^\infty(U, \End(V))$. Choose $\nu \in \R$ with 
\begin{equation*} 
  \nu < \inf \Re \, \mathrm{spec}\, A(p).
\end{equation*}
Then for every $m \in \N$ and every compact neighborhood $K$ of $p$, there exists a $C^m(K)$-ball
\begin{equation*} \label{mBall}
  B_R^{m,K}(X, A) := \Bigl\{(X^\prime, A^\prime) ~\bigl|~ \|X^\prime - X\|_{C^m(K)}^2 + \|A^\prime - A\|_{C^m(K)}^2 \leq R^2 \Bigr\}
\end{equation*}
and a constant $C>0$ such that
\begin{equation}
  \| E_{X^\prime, A^\prime}(t) \|_{C^m(K)} \leq C e^{ t \nu}
\end{equation}
for all $t \leq 0$ and for each $(X^\prime, A^\prime) \in B_R^{m,K}(X, A) \cap \bigl(\X_U \times C^\infty(U, \End(V))\bigr)$.
\end{theorem}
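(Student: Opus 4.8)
The plan is to reduce everything to a single scalar estimate for the propagator and then bootstrap to the $C^m$-norm by differentiating the flow equation. First I would fix $p$, $X$, $A$ and choose $\nu < \inf\Re\,\mathrm{spec}\,A(p)$; by continuity of eigenvalues we may also fix $\nu' $ with $\nu < \nu' < \inf\Re\,\mathrm{spec}\,A(p)$ and a compact neighborhood $K$ small enough that $\inf_{y\in K}\Re\,\mathrm{spec}\,A(y) > \nu'$, shrinking $K$ if necessary (note the critical point $p'$ of a nearby $X'$ stays close to $p$, so one should really pick $K$ with $p'\in K$ for all $(X',A')$ in a small ball, which is possible since the source is non-degenerate hence persistent). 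The first real step is the pointwise bound $\|E_{X,A}(t,y)\| \leq C_0 e^{t\nu}$ for $t\leq 0$, $y\in K$: this is a standard Lyapunov-type argument. Passing to a basis in which $A(p)$ is close to block-diagonal, or more cleanly using that for a matrix $B$ with $\Re\,\mathrm{spec}\,B > \nu'$ one has $\|e^{sB}\|\leq c\,e^{-s\nu'}$ for $s\geq 0$, one writes $E_{X,A}(t,y)$ via a Dyson/variation-of-constants expansion around the constant matrix $A(p')$ and controls the error $A(\Phi^X_t(y)) - A(p')$. Here the crucial input from the star-shaped hypothesis and Notation~\ref{NotationXU} is that $\Phi^X_t(y)\to p'$ as $t\to-\infty$, in fact exponentially (the linearization at $p'$ is positive, so the stable-manifold/Hartman-type estimates give $|\Phi^X_t(y)-p'|\leq c\,e^{t\delta}$ for some $\delta>0$ uniformly on $K$); this integrable decay is exactly what lets the Grönwall/Dyson estimate close with exponent $\nu$ rather than $\nu'$.

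Next I would promote this to a $C^m(K)$ estimate. Differentiating \eqref{InitialValueProblem23} in $y$ up to order $m$ produces a triangular linear system: $\partial_y^\beta E$ satisfies $\partial_t (\partial_y^\beta E) = A(\Phi_t(y))\,\partial_y^\beta E + F_\beta(t,y)$, where $F_\beta$ is a universal polynomial expression in lower-order $y$-derivatives of $E$ and in $y$-derivatives of $A\circ\Phi_t$ up to order $|\beta|$. The $y$-derivatives of $A\circ\Phi_t$ are themselves polynomial in $(DA)\circ\Phi_t,\dots$ and in the $y$-derivatives of the flow $\Phi_t$; the latter satisfy the variational equations along $X$, and on a star-shaped neighborhood of a strictly positive source they are bounded — indeed, since the linearization of $X$ at $p'$ has spectrum with positive real part, the first variation $D_y\Phi_t$ decays like $e^{t\delta'}$ and higher variations are bounded (this is again a Hartman–Grobman / Sternberg-type fact, and the technical lemmas "divested to the appendix" presumably supply precisely these flow estimates). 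Feeding these bounds in, $F_\beta(t,y)$ is controlled by $\text{(const)}\cdot e^{t\nu'}\cdot(\text{sup over lower-order }\|\partial_y^{\beta'}E(t)\|)$, and one then solves the inhomogeneous linear ODE by the variation-of-constants formula using the order-zero bound as the homogeneous kernel, inducting on $|\beta|$: at each stage $\|\partial_y^\beta E(t)\| \leq C_\beta e^{t\nu}$, possibly after replacing $\nu$ by a slightly larger value and then relabeling, or — cleaner — by keeping a fixed gap between $\nu$ and the spectral infimum and absorbing polynomial-in-$t$ factors into the exponential.

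Finally, uniformity over the ball $B_R^{m,K}(X,A)$: every constant appearing above (the bound $c$ on $\|e^{sA(p')}\|$, the flow-decay rates $\delta,\delta'$, the sup of $A$ and its derivatives on $K$, the flow-variation bounds) depends continuously on $(X',A')$ in the $C^m(K)$ topology — the spectrum of $A'(p')$ moves continuously, the persistent critical point $p'$ depends continuously on $X'$, and the flow and its variations depend smoothly on $X'$ by Notation~\ref{NotationXU}. Hence there is $R>0$ so small that on $B_R^{m,K}(X,A)$ one still has $\inf_{y\in K}\Re\,\mathrm{spec}\,A'(y) > \nu'$ and all the auxiliary constants are bounded by fixed multiples of their values at $(X,A)$; choosing $C$ as the resulting uniform constant gives the claim for all $(X',A')$ in the ball that actually lie in $\X_U\times C^\infty(U,\End(V))$.

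I expect the main obstacle to be organizing the $C^m$ bootstrap cleanly — in particular tracking how $y$-derivatives of the flow $\Phi_t$ enter $F_\beta$ and verifying they stay bounded (not merely polynomially growing) on the star-shaped neighborhood, and doing so uniformly in the data. The exponential decay $|\Phi_t(y)-p'|\lesssim e^{t\delta}$ and the boundedness of the higher variational flows are the load-bearing facts; if the appendix lemmas provide these with uniform-in-$(X',A')$ constants, the rest is Grönwall and induction. A secondary subtlety is that the nearby vector field $X'$ has its source at a slightly moved point $p'$ and one must ensure $K$ is a genuine (compact) neighborhood of $p'$ for all $(X',A')$ in the ball, and that $\nu < \inf\Re\,\mathrm{spec}\,A'(p')$ persists; both follow from shrinking $R$.
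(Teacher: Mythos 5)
Your proposal is correct in substance but follows a genuinely different route from the paper. You prove the $C^0$ bound first (Dyson expansion around $A(p')$, using exponential convergence of $\Phi^X_t(y)\to p'$) and then bootstrap to $C^m$ by inducting on the order of the $y$-derivative, solving each inhomogeneous variational equation by variation of constants. The paper instead stacks \emph{all} derivatives $D^{\beta}E$, $|\beta|\le m$, into a single vector: differentiating \eqref{InitialValueProblem23} yields one big linear system $\dot F = W_{X,A}(t,y)F$ whose coefficient matrix is block-triangular with copies of $A(\Phi^X_t(y))$ on the diagonal, hence has the same spectrum; the whole theorem then follows from a \emph{single} application of the appendix comparison result (Thm.~\ref{ThmEstimateLemma}), which compares $W_{X,A}(t,y)$ with its $t\to-\infty$ limit $W_0$ and needs only that $W$ be eventually \emph{close} to $W_0$ (not integrably or exponentially close). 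That trick buys three things your route has to work for: there is no induction and no polynomial-in-$t$ loss to absorb; no two-time propagator bound $\|E(t)E(s)^{-1}\|\le Me^{(t-s)\nu}$ is needed (your variation-of-constants step implicitly requires this strengthening of the order-zero bound, which you should state and prove, e.g.\ by time-shifting as in the proof of Thm.~\ref{ThmEstimateLemma}); and uniformity over the ball reduces to a single continuity statement for $W_{X,A}$ in $(X,A)$. Your route, in exchange, is more elementary and makes explicit the role of the backward flow-derivative decay, which the paper's proof also uses tacitly (to define $W_0$ and to verify \eqref{Distance} uniformly in $t$) and which you must establish independently of Cor.~\ref{FlowEstimatesII}, since the paper derives that corollary \emph{from} this theorem --- your appeal to stable-manifold estimates for $D\Phi_t$ and the variational equations does avoid the circularity. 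Two small slips: the inequality ``$\|e^{sB}\|\le c\,e^{-s\nu'}$ for $s\ge 0$'' should read $\|e^{tB}\|\le c\,e^{t\nu'}$ for $t\le 0$ (equivalently a bound on $e^{-sB}$, cf.\ \eqref{EstimateOnE0}); and shrinking $K$ is not available to you, since the statement quantifies over every compact neighborhood $K$ of $p$ --- but this is harmless, as only the behavior for $t\le t_0$ (when the trajectory has entered a small neighborhood of $p'$) constrains the exponent, the finite time interval $[t_0,0]$ contributing only to the constant $C$.
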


\begin{proof}
By differentiating \eqref{InitialValueProblem23} with respect to $y$, we see that $D^\alpha E_{X, A}(t, y)$ fulfills the differential equation
\begin{equation*}
  \frac{\partial}{\partial t} D^\alpha E_{X, A}(t, y) = \sum_{\beta \leq \alpha} \binom{\alpha}{\beta} \DD^{\alpha - \beta} [A(\Phi_t^X(y))] \,\, \DD^{\beta} E_{X, A}(t, y), ~~~~~~ D^\alpha E_{X, A}(0, y) = 0.
\end{equation*}
Set
\begin{equation*}
  W_{X, A}(t, y) := \left( \binom{\gamma}{\beta} D^{\gamma-\beta} [A(\Phi_t^X(y))] \right)_{|\gamma|, |\beta| \leq m}
\end{equation*}
and let $F_{X,A}(t, y)$ be the solution of the problem
\begin{equation*}
  \frac{\partial}{\partial t} F_{X, A}(t, y) = W_{X,A}(t, y) F_{X,A}(t, y), ~~~~~~~~ F_{X,A}(0, y) = \mathrm{id}.
\end{equation*}
By construction, $\|E(t)\|_{C^m(K)} \leq C \| F(t)\|_{C^0(K)}$ for all $t \leq 0$ and some $C$ depending on the choice of matrix norm and on $m$, so estimating the $C^0$ norm of $F$ amounts to estimating the $C^m$ norm of $E$.
  
There exists a basis such that $W_{X,A}(t, y)$ is an upper triangular matrix with copies of $A(\Phi_t^X(y))$ on the diagonal, so $W_{X,A}(t, y)$ and $A(\Phi_t^X(y))$ have the same eigenvalues. Furthermore, it is clear that
\begin{equation*}
  W_0 := \lim_{t \longrightarrow -\infty} W_{X,A}(t, y)
\end{equation*}
exists and is independent of $y$. 
 
The rest is an application of Thm.\ \ref{ThmEstimateLemma}. Set $\varepsilon = \min \Re \, \mathrm{spec}(W_0) - \nu$ and choose $t_0< 0$ such that
\begin{equation*}
  \|W_{X,A}(t, y) - W_0\| < \frac{\varepsilon/2}{M(W_0, \varepsilon/2)} - \delta
\end{equation*}
for all $t<t_0$ and all $y \in K$, where $\delta>0$ is chosen so small that the right side is still positive. Then for all $(X^\prime, A^\prime)$ such that
\begin{equation} \label{Distance}
\|W_{X,A}(t, y) - W_{X^\prime,A^\prime}(t, y)\|< \delta
\end{equation} 
for all $t\leq 0$ and all $y \in K$, we get that
\begin{equation*}
  \|W_{X^\prime,A^\prime}(t, y) - W_0\| < \frac{\varepsilon/2}{M(W_0, \varepsilon/2)}
\end{equation*}
by the triangle inequality. By Thm.\ \ref{ThmEstimateLemma}, this means that
\begin{equation*}
  \|F_{X^\prime,A^\prime}(t, y)\| \leq C e^{t\nu},
\end{equation*}
for all $y \in K$, where 
\begin{align*}
  C &= 
  M(W_0, \varepsilon/2) M(W_0, \varepsilon) \exp\left(-t_0M(W_0, \varepsilon)\sup_{t \leq 0}\|W_{X^\prime,A^\prime}(t, y) - W_0\|\right) \\
  &\leq M(W_0, \varepsilon/2) M(W_0, \varepsilon) \exp\left(-t_0M(W_0, \varepsilon)\Bigl(\delta + \sup_{t \leq 0, y \in K}\|W_{X,A}(t, y) - W_0\|\Bigr)\right).
\end{align*}
On the other hand, \eqref{Distance} holds certainly in some $C^m(K)$-ball around $(X, A)$ small enough, as $W_{X, A}$ depends $C^m$-continuously on $X$ and $A$.
\end{proof}

\begin{corollary}[Flow Estimates] \label{FlowEstimatesII}
Let $X \in \X_U$ have a positive definite source at $p \in U$. Assume that
\begin{equation*}
  \nu < \inf \mathrm{Re} \,\mathrm{spec}(\DD X|_p).
\end{equation*}
Then for every $m \in \N_0$ and every compact subset $K$ of $U$, there exists an open $C^m(K)$-ball $B^{m,K}_R(X)$  in $\X_U$ around $X$ and a constant $C$ such that
\begin{equation}
  \| \DD \Phi_t^{X^\prime} \|_{C^m(K)} \leq Ce^{\nu t}
\end{equation}
for all $t \leq 0$ and $X^\prime \in B^{m,K}_R(X)$.
\end{corollary}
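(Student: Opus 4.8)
The plan is to recognize the Jacobian $\DD\Phi_t^X(y)$ of the flow in the space variable as the solution of exactly the parametrized initial value problem \eqref{InitialValueProblem23}, with the endomorphism field $A$ taken to be the Jacobian field $\DD X \in C^\infty(U, \End(\R^n))$, $y \mapsto \bigl(\partial_j X^i(y)\bigr)_{ij}$, and then to quote Theorem \ref{ThmODESolutionEstimate}. Indeed, differentiating the flow equation $\partial_t \Phi_t^X(y) = X(\Phi_t^X(y))$ in $y$ yields the equation of first variation
\[
  \partial_t \DD_y \Phi_t^X(y) = \DD X\bigl(\Phi_t^X(y)\bigr)\, \DD_y\Phi_t^X(y), \qquad \DD_y\Phi_0^X(y) = \id,
\]
so by uniqueness of solutions $\DD\Phi_t^X(y) = E_{X, \DD X}(t, y)$, and likewise $\DD\Phi_t^{X'} = E_{X', \DD X'}(t, \cdot)$ for every $X' \in \X_U$. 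Since $p$ is a critical point of $X$, the value $(\DD X)(p)$ is precisely the linearization $\DD X|_p$; hence the hypothesis $\nu < \inf\Re\,\mathrm{spec}(\DD X|_p)$ is exactly the hypothesis $\nu < \inf\Re\,\mathrm{spec}\,A(p)$ required to apply Theorem \ref{ThmODESolutionEstimate} with $A = \DD X$.

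Before doing so I would replace the given compact $K \subseteq U$ by a compact neighborhood of $p$: setting $K' := K \cup \overline{B_\varepsilon(p)}$ with $\varepsilon$ so small that $\overline{B_\varepsilon(p)} \subseteq U$ gives a compact neighborhood of $p$ containing $K$, and it suffices to estimate $\|\DD\Phi_t^{X'}\|_{C^m(K')}$. Applying Theorem \ref{ThmODESolutionEstimate} at order $m$ with this $K'$ and $A = \DD X$ provides an $R > 0$ and a $C > 0$ with $\|E_{X', A'}(t)\|_{C^m(K')} \le C e^{t\nu}$ for all $t \le 0$ and all $(X', A') \in B_R^{m,K'}(X, \DD X) \cap \bigl(\X_U \times C^\infty(U, \End(\R^n))\bigr)$. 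It remains to pull this estimate back along $X' \mapsto (X', \DD X')$: because $\|\DD X' - \DD X\|_{C^m(K')} \le c_n\|X' - X\|_{C^{m+1}(K')}$, the pair $(X', \DD X')$ lies in $B_R^{m, K'}(X, \DD X)$ whenever $X'$ is close enough to $X$ in $C^{m+1}(K')$. For such $X' \in \X_U$ we conclude $\|\DD\Phi_t^{X'}\|_{C^m(K)} \le \|\DD\Phi_t^{X'}\|_{C^m(K')} = \|E_{X', \DD X'}(t)\|_{C^m(K')} \le C e^{t\nu}$, which is the assertion.

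I do not anticipate a genuine obstacle: the corollary is really just the remark that the first variation of the flow solves \eqref{InitialValueProblem23}. The only two points needing attention are cosmetic: the loss of one derivative --- controlling $\DD\Phi^{X'}$ in $C^m$ requires $X'$ close to $X$ in $C^{m+1}$, so the ball in the statement should be read in the $C^{m+1}(K)$ norm (equivalently, one applies Theorem \ref{ThmODESolutionEstimate} with $m$ replaced by $m+1$) --- and the trivial enlargement of $K$ to a neighborhood of $p$ needed to meet the hypotheses of Theorem \ref{ThmODESolutionEstimate}.
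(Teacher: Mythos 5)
Your proof is correct and is essentially identical to the paper's: differentiate the flow equation in $y$ to see that $\DD\Phi_t^{X}$ solves \eqref{InitialValueProblem23} with $A = \DD X$, then apply Thm.\ \ref{ThmODESolutionEstimate}. Your two technical remarks (enlarging $K$ to a compact neighborhood of $p$, and the one-derivative loss in passing from $X'$ to $\DD X'$, so the ball should really be read in $C^{m+1}(K)$) are accurate points that the paper's two-line proof silently glosses over.
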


\begin{proof}
 By differentiating the defining equation $\frac{\partial}{\partial t}{\Phi}_t(y) = X(\Phi_t(y))$, $\Phi_0(y) = y$ for the flow with respect to $y$, we get that $\DD \Phi_t(y)$ fulfills the differential equation
\begin{equation*}
  \frac{\partial}{\partial t} \DD \Phi_t|_y = \DD X|_{\Phi_t(y)} \cdot \DD \Phi_t|_y, ~~~~~~ \DD \Phi_0|_y = \mathrm{id}
\end{equation*}
Now apply Thm.\ \ref{ThmODESolutionEstimate} with $A := \DD X$.
\end{proof}


\section{Flat Solutions} \label{section2}

In this section, we prove the following theorem, which is a key to everything that follows. Recall that a local section $u$ of $\V$ is called flat at $p$ if $U(p)=0$ and all its derivatives (repeated, into any direction) vanish at $p$ as well. 

\begin{theorem} \label{ThmFlat}
Let $X$ be a smooth vector field on $M$ with a strictly positive source at $p \in M$ and let $U$ be open and star-shaped around $p$ with respect to $X$. Then for each section $v \in \Gamma^\infty(U, \V)$ that is flat at $p$, there exists a unique flat section $u \in \Gamma^\infty(U, \V)$ such that
\begin{equation} \label{TheEquationFlat}
  (\nabla_X + A)u = v.
\end{equation}
In particular, the homogeneous equation \eqref{HomogeneousEquation} has no non-trivial flat solutions.
\end{theorem}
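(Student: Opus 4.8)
The plan is to solve \eqref{TheEquationFlat} by the variation-of-constants formula along the flow of $X$, using the exponential decay established in Section~\ref{section1_5} to make the resulting improper integral converge, and then to bootstrap that convergence to all derivatives using the flatness of $v$. Since everything can be done in a coordinate chart around $p$ (uniqueness and flatness are local statements, and away from $p$ the equation is a regular ODE along characteristics that is forced by the behaviour near $p$ together with star-shapedness), I would first reduce to the case $U \subseteq \R^n$ open, $\V = U \times V$ trivial, $\nabla = d + \omega$ for a connection one-form $\omega$, so that $\nabla_X + A$ becomes $X^i \partial_i + B$ with $B = \omega(X) + A \in C^\infty(U, \End V)$. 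Pulling back along the flow, for fixed $y \in U$ set $u_y(t) := u(\Phi_t(y))$ and $v_y(t) := v(\Phi_t(y))$; then \eqref{TheEquationFlat} becomes the linear ODE $\frac{d}{dt} u_y(t) = -B(\Phi_t(y)) u_y(t) + v_y(t)$ in $t \leq 0$, whose solutions are $u_y(t) = E_{X,B}(t,y)^{-1}\bigl( c(y) - \int_0^t E_{X,B}(s,y) v_y(s)\,ds\bigr)$ with $E_{X,B}$ the fundamental solution of \eqref{InitialValueProblem23}. The requirement that $u$ be continuous (indeed flat) at $p$ forces the constant: letting $t \to -\infty$ and using that $v$ is flat while $\Phi_t(y) \to p$, the only admissible choice is $c(y) = \int_0^{-\infty} E_{X,B}(s,y) v_y(s)\,ds$, giving the candidate
\begin{equation} \label{CandidateFlat}
  u(y) = -\int_0^{-\infty} E_{X,B}(s,y)\, v(\Phi_s(y))\, ds = \int_{-\infty}^0 E_{X,B}(s,y)\, v(\Phi_s(y))\, ds.
\end{equation}
This already shows uniqueness: any two flat solutions differ by a flat solution of the homogeneous equation, and along each characteristic such a solution is $E_{X,B}(t,y)^{-1} c(y)$, which tends to $p$ with a nonzero limit unless $c(y) = 0$ — establishing the last sentence of the theorem.

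The substance is to show that \eqref{CandidateFlat} defines a smooth section that is flat at $p$ and actually solves \eqref{TheEquationFlat}. Convergence of the integral and smoothness: differentiating \eqref{CandidateFlat} in $y$ produces, by the Leibniz rule, a sum of terms each containing a factor $\DD^\beta E_{X,B}(s,y)$, factors $\DD^\gamma(v \circ \Phi_s)(y)$, and products of derivatives $\DD^{(\cdot)}\Phi_s(y)$ of the flow. By Thm.~\ref{ThmODESolutionEstimate} (applied with any $\nu < \inf\Re\,\mathrm{spec}\,B(p)$; note $B(p) = A(p)$ since $X(p)=0$, but in fact any $\nu$ suffices here) each $\DD^\beta E_{X,B}(s,y)$ grows no worse than $C e^{s\nu}$, i.e.\ is bounded as $s\to-\infty$ for $\nu<0$ or grows subexponentially; by Cor.~\ref{FlowEstimatesII} the flow derivatives $\DD^{(\cdot)}\Phi_s(y)$ decay like $e^{\nu' s}$ for any $\nu' < \inf\Re\,\mathrm{spec}(\DD X|_p)$, which we may take positive, so $|\Phi_s(y) - p| \le C e^{\nu' s}$ as $s \to -\infty$; and because $v$ is flat at $p$, Taylor's theorem with remainder gives $|\DD^\gamma(v\circ\Phi_s)(y)| \le C_N |\Phi_s(y) - p|^N \cdot (\text{flow derivatives}) \le C_N e^{N\nu' s}$ for every $N$. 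Choosing $N$ large beats the at-most-exponential growth coming from $E_{X,B}$, so every differentiated integral converges absolutely and uniformly on compact subsets of $U$; hence $u \in \Gamma^\infty(U,\V)$ by standard differentiation-under-the-integral, and moreover $\DD^\gamma u(y) \to 0$ as $y \to p$ for every $\gamma$ (the same estimates give a bound $C_N |y-p|^N$ on a neighbourhood of $p$ after reintroducing the $y$-dependence), so $u$ is flat at $p$.

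That \eqref{CandidateFlat} solves the equation is a direct computation: the fundamental-solution identities $\frac{\partial}{\partial s} E_{X,B}(s,y) = B(\Phi_s(y)) E_{X,B}(s,y)$ and $E_{X,B}(s, \Phi_r(y)) = E_{X,B}(s+r,y) E_{X,B}(r,y)^{-1}$ (the cocycle property, from uniqueness of ODE solutions) show that $t \mapsto u(\Phi_t(y))$ is exactly the variation-of-constants solution with the constant we fixed, so $\frac{d}{dt}\big|_{t=0} u(\Phi_t(y)) = -B(y)u(y) + v(y)$, i.e.\ $(X^i\partial_i + B)u = v$, which is \eqref{TheEquationFlat} in the chart. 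The main obstacle is purely the decay bookkeeping in the previous paragraph: one must combine the three ingredients — possibly-growing $E_{X,B}$, exponentially decaying flow derivatives, and the arbitrarily-high-order vanishing of $v$ at $p$ — uniformly in a neighbourhood of $p$ and over all multi-indices up to a given order, and confirm that the constant $c(y)$ in \eqref{CandidateFlat} depends smoothly on $y$ (which follows from the same estimates, as $c(y) = u(y)$). Everything else, including passing from the chart back to the manifold and extending the solution from a neighbourhood of $p$ to all of $U$ by solving the regular ODE along characteristics outward (using $\Phi_t(U)\subseteq U$ and $\Phi_t(q)\to p$), is routine.
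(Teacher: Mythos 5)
Your route is the same as the paper's: reduce to a chart with trivial bundle and absorbed connection, write the solution as an improper integral along the backward flow, get convergence in every $C^m(K)$-norm (and flatness of $u$) from the exponential estimates of Thm.~\ref{ThmODESolutionEstimate} and Cor.~\ref{FlowEstimatesII} combined with the arbitrarily fast decay of $v\circ\Phi_s$ that flatness provides, and verify the equation via the cocycle identity. One bookkeeping point should be repaired: along a characteristic the equation is $\dot u_y = -B(\Phi_t(y))u_y + v_y$, so the relevant fundamental solution is $F$ with $\dot F = -B(\Phi_t(y))F$, $F(0)=\id$, and variation of constants gives $u_y(t) = F(t)\bigl(c(y) + \int_0^t F(s)^{-1}v_y(s)\,\dd s\bigr)$, hence the candidate $u(y) = \int_{-\infty}^0 F(s,y)^{-1}v(\Phi_s(y))\,\dd s$. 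Your $E_{X,B}$, as defined by \eqref{InitialValueProblem23}, solves $\dot E = +B(\Phi_t(y))E$ by \emph{left} multiplication; since the matrices $B(\Phi_t(y))$ at different times need not commute, $E_{X,B}(s,y)$ is not $F(s,y)^{-1}$ (the latter solves the right-multiplication equation $\dot G = GB$), and the displayed expression $E_{X,B}(t,y)^{-1}\bigl(c(y)-\int_0^t E_{X,B}v\bigr)$ does not solve the ODE. This is a fixable convention slip, but the candidate formula as literally written is not a solution.

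The genuine gap is in uniqueness. You assert that a flat homogeneous solution restricted to a characteristic ``tends to $p$ with a nonzero limit unless $c(y)=0$.'' There is no sign restriction on $A(p)$, and whenever its spectrum has negative real part this is false: already for scalar constant $B\equiv b<0$ the homogeneous solution is $w(\Phi_t(y)) = e^{-bt}w(y)\to 0$ as $t\to-\infty$, so neither continuity nor vanishing at $p$ forces $w\equiv 0$ (the same issue undermines your earlier claim that continuity alone pins down $c(y)$). What does work is the quantitative version: applying Thm.~\ref{ThmODESolutionEstimate} to $F(t,y)^{-1}$ yields $|F(t,y)w|\geq C^{-1}e^{-\nu t}|w|$ for some \emph{fixed} $\nu$, so a nonzero homogeneous solution decays along the characteristic at most at a fixed exponential rate; flatness at $p$, on the other hand, forces $|w(\Phi_t(y))|\leq C_S e^{St}$ for \emph{every} $S$, because $|\Phi_t(y)|\geq \delta e^{\mu t}|y|$ (this is the content of Lemma~\ref{FlatSolutionsEstimate}). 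You have all the ingredients for this argument in hand, but the uniqueness step as stated does not prove it.
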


This theorem appears in \citep[Prop.\ 2.3.7]{helffer88} and \citep[p.\ 23]{dimassisjoestrand99} (and probably other sources), though in a less general setup. However, the proof of it is held very brief in all sources known to the author, which made it seem worthwhile to write down a more extensive version.

To make our lives a little easier, we use the following lemma.

\begin{lemma} \label{LemmaSufficient}
It suffices to check Thm.\ \ref{ThmFlat} and Thm.\ \ref{ThmFredholm} for one star-shaped neighborhood only.
\end{lemma}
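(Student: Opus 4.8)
The plan is to show that the validity of Theorem \ref{ThmFlat} (and likewise Theorem \ref{ThmFredholm}) on one star-shaped neighborhood $U_0$ of $p$ propagates to an arbitrary star-shaped neighborhood $U$. The key observation is that if $U$ and $U'$ are both star-shaped around $p$ with respect to $X$, then so is $U \cap U'$: the flow lines of points in the intersection stay in both sets for $t \leq 0$ and converge to $p$. Moreover, any star-shaped $U$ is \emph{exhausted from inside by the flow} in the sense that $U = \bigcup_{t \leq 0} \Phi_t(U)$, and, more usefully, for any star-shaped $U_0 \subseteq U$ one has $U = \bigcup_{t \leq 0} \Phi_t(U_0)$, since for $q \in U$ the backward flow enters and eventually stays in any neighborhood of $p$, in particular in $U_0$. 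This is the geometric mechanism that lets us move solutions around.

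The main step is a \emph{transport (propagation) argument}. Suppose the theorem holds on $U_0$ and let $U$ be an arbitrary star-shaped neighborhood; by passing to $U_0 \cap U$ we may assume $U_0 \subseteq U$. Given $v \in \Gamma^\infty(U, \V)$ flat at $p$, restrict it to $U_0$ to obtain a flat solution $u_0$ on $U_0$ with $(\nabla_X + A)u_0 = v$ there. Now extend $u_0$ to all of $U$ by flowing: for $q \in U$ pick $t \leq 0$ with $\Phi_t(q) \in U_0$ and set $u(q)$ to be the value obtained by solving the linear ODE along the characteristic through $q$ with "initial" condition $u_0(\Phi_t(q))$ at parameter $t$. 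Concretely, along the integral curve $s \mapsto \Phi_s(q)$ the equation $(\nabla_X + A)u = v$ becomes, after trivializing $\V$ by parallel transport, the linear ODE $\frac{\dd}{\dd s}\tilde u = \tilde A \tilde u + \tilde v$ whose solution exists and is unique for all $s$ for which $\Phi_s(q) \in U$; using the solution $u_0$ on the overlap fixes the constant of integration. One checks that this $u$ is well-defined (independent of the choice of $t$, by uniqueness of ODE solutions), agrees with $u_0$ on $U_0$, is smooth (by smooth dependence of ODE solutions on initial data and parameters, away from $p$ this is classical; at $p$ it follows because near $p$ we are inside $U_0$ where $u$ already equals the smooth $u_0$), solves \eqref{TheEquationFlat} on all of $U$, and is flat at $p$ (again since it coincides with the flat $u_0$ near $p$). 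Uniqueness on $U$ follows from uniqueness on $U_0$ together with the ODE uniqueness along characteristics: the difference of two solutions is flat at $p$, vanishes on $U_0$ by the uniqueness statement there, hence vanishes along every characteristic, hence everywhere on $U$.

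For Theorem \ref{ThmFredholm} the reduction is analogous but phrased at the level of the operator. Restriction $\Gamma^\infty(U, \V) \to \Gamma^\infty(U_0, \V)$ and the extension-by-flow map just constructed are mutually inverse on the relevant solution spaces: a solution of $(\nabla_X + A)u = \lambda u + v$ on $U$ restricts to one on $U_0$, and conversely a solution on $U_0$ (with $v$ the restriction of a global $v$) extends uniquely to $U$ by the same characteristic-transport procedure, because the inhomogeneity $\lambda u + v$ only requires knowing $u$ itself along the characteristic, which the ODE determines. Hence the kernel of $(\nabla_X + A) - \lambda$ on $\Gamma^\infty(U, \V)$ is isomorphic to that on $\Gamma^\infty(U_0, \V)$; similarly for the dual operator on $\Eh'$, where one uses that a compactly supported distribution on $U_0$ pushes forward along the flow and that $(\nabla_X+A)'$ intertwines these maps (the adjoint of restriction is extension by zero, and the flow identification respects supports). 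Solvability and the codimension/finite-rank statements then transfer verbatim.

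The step I expect to be the main obstacle is verifying \emph{smoothness of the flow-extended section $u$ across all of $U$}, in particular checking that there is no loss of regularity when one chooses different overlap times $t$ for nearby points $q$ and that the glued object is genuinely $C^\infty$ rather than merely smooth on $U \setminus \{p\}$ plus flat at $p$. This is handled by the observation that every point of $U$, including $p$ itself, has a neighborhood on which $u$ agrees with a \emph{single} smooth object — either $u_0$ (near $p$ and on $U_0$) or the solution of a fixed ODE with smooth coefficients and smoothly varying data (away from $p$), using the smooth dependence of ODE solutions on data established implicitly via the flow regularity in Notation \ref{NotationXU} — so smoothness is local and already known on each piece. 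Everything else is routine bookkeeping with uniqueness of characteristics.
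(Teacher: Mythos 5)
Your argument is correct and is essentially the paper's own proof: both transport solutions between the two star-shaped neighborhoods by solving the linear ODE along the characteristics of $X$, with uniqueness of ODE solutions giving well-definedness and uniqueness (the paper parametrizes the characteristics by a transversal ``sphere'' $S \subset U \cap U'$ rather than by overlap times, but that is cosmetic, and your extra attention to smoothness of the glued section only makes the terse original more complete). One small slip: the identity $U = \bigcup_{t \le 0} \Phi_t(U_0)$ cannot be right, since $\Phi_t(U_0) \subseteq U_0$ for $t \le 0$ makes that union equal to $U_0$; what you actually use, and what is true, is that every $q \in U$ satisfies $\Phi_t(q) \in U_0$ for some $t \le 0$, i.e.\ $U$ is covered by the forward-time images of $U_0$.
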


\begin{proof}
Suppose that the theorems have been established for some open neighborhood $U$ star-shaped with respect to $X$ around $p$. We claim that in this case, they already hold for any other star-shaped set $U^\prime$. 

First observe that there is a solution at all on $U^\prime$: Choose a "sphere" with respect to $X$ in $U \cap U^\prime$, by which we mean an $n-1$-dimensional submanifold $S$ of $M$ such that for each point $q \in U^\prime$, there is exactly one $t_0$ such that $\Phi_{t_0}(q) \in S$ ($\Phi_t$ being the flow of $X$). If $u \in \Gamma^\infty(U, \V)$ is a solution of \eqref{InhomogeneousEquation} on $U$, then a solution on $U^\prime$ can be constructed with the method of characteristics by solving the ordinary differential equation
\begin{equation}
  \frac{\dd}{\dd t} u^\prime(t) + A(\Phi_t(q))u^\prime(t) = \lambda u^\prime(t) + v(\Phi_t(y)), ~~~~~~ u^\prime(t_0) = u(y)
\end{equation}
 along the outward integral curves of $X$ with initial values given by $u|_S$; this is then the only solution on $U^\prime$ that coincides with $u$ on $U$. 

In the same way, the restriction of any solution $u^\prime$ on $U^\prime$ to $U \cap U^\prime$ can  be uniquely extended to a solution on $U$.
\end{proof}

\begin{corollary}
We may assume $M \subseteq \R^n$ open and that $\V = M \times V$ for some fixed vector space $V$. Furthermore, we may assume that the connection on $\V$ is trivial, $\nabla = \DD$, so in the case of Thm.\ \ref{ThmFlat}, we are left with the differential equation
\begin{equation} \label{TheEquation3}
  (\DD_X + A)u = v.
\end{equation}
\end{corollary}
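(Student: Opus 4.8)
The plan is to use Lemma \ref{LemmaSufficient} to move the whole problem into one conveniently chosen star-shaped neighborhood, and then to transport it through a chart and a bundle trivialization. First I would pick a coordinate chart $\kappa \colon U_0 \to \R^n$ around $p$ with $\kappa(p) = 0$. Pushing forward along $\kappa$ turns $X$ into a vector field $\kappa_* X$ on the open set $\kappa(U_0) \subseteq \R^n$ and turns $\V|_{U_0}$, $\nabla$, $A$ into a bundle, connection and endomorphism field there. Since $X(p) = 0$, the linearization of $\kappa_* X$ at $0$ is conjugate (by $\DD\kappa|_p$) to $\nabla X|_p$, hence has the same eigenvalues, so $0$ is again a strictly positive source; and $\kappa$ conjugates the two flows, so a set is star-shaped with respect to $X$ if and only if its $\kappa$-image is star-shaped with respect to $\kappa_* X$. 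Thus we may assume from the start that $M \subseteq \R^n$ is open and $p = 0$.

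Next I would shrink. By the stable manifold theorem (cf.\ Definition \ref{DefStarshaped}) there exist arbitrarily small star-shaped neighborhoods of $p$; any such neighborhood is moreover contractible, being deformation-retracted onto $p$ by the backward flow of $X$. Choose one, call it $U$, small enough to lie in the chart domain and to admit a trivialization $\Psi \colon \V|_U \to U \times V$ (which exists by contractibility), and fix such a $\Psi$. By Lemma \ref{LemmaSufficient} it suffices to prove Theorems \ref{ThmFlat} and \ref{ThmFredholm} on this $U$. Under $\Psi$, smooth sections of $\V|_U$ correspond to elements of $C^\infty(U, V)$, and this correspondence is an isomorphism of Fréchet spaces; dually it identifies $\Eh^\prime(U, \V^*)$ with $\Eh^\prime(U, V^*)$. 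It also preserves flatness at $p$: in a frame, flatness of a section means the vanishing at $p$ of all its component functions together with all their partial derivatives, and by the Leibniz rule this property is unchanged under the smooth invertible change of frame effected by $\Psi$.

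Finally I would absorb the connection. In the trivialization $\nabla = \DD + \omega$ for a smooth $\End(V)$-valued $1$-form $\omega$, so $\nabla_X = \DD_X + \omega(X)$ and hence $\nabla_X + A = \DD_X + A^\prime$ with $A^\prime := \omega(X) + A \in C^\infty(U, \End(V))$ still smooth. Under $\Psi$ the equation $(\nabla_X + A)u = \lambda u + v$ becomes $(\DD_X + A^\prime)u = \lambda u + v$, which for $\lambda = 0$ is precisely \eqref{TheEquation3}, and the dual equation \eqref{DualEquation} transforms the same way. Since all of the identifications above are Fréchet isomorphisms compatible with duality and with flatness at $p$, the statements of Theorems \ref{ThmFlat} and \ref{ThmFredholm} for $\DD_X + A^\prime$ over $U \subseteq \R^n$ yield the general statements. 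I do not expect any genuine obstacle here beyond bookkeeping; the only points that deserve a line of justification are the existence of a star-shaped neighborhood small enough to trivialize $\V$, and the compatibility of ``passing to components of a section'' with the Fréchet topology, with taking duals, and with the notion of flatness, all of which are routine.
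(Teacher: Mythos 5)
Your proposal is correct and follows essentially the same route as the paper: reduce via Lemma \ref{LemmaSufficient} to a star-shaped neighborhood inside a chart domain, trivialize $\V$ there (the paper also implicitly uses that such a $U$ is contractible), and absorb the Christoffel symbols $X^i\Gamma^k_{ij}$ — your $\omega(X)$ — into $A$. The extra bookkeeping you supply (preservation of flatness, Fréchet topology, and duality under the trivialization) is left implicit in the paper but is exactly the right justification.
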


\begin{proof}
Choose a neighborhood $U$ of $p$ that is star-shaped with respect to $X$ and lies in the domain of a chart. By the above discussion, it suffices to verify Thm.\ \ref{ThmFlat} on this neighborhood; hence we may assume $M \subseteq \R^n$ open and $\V = M \times V$, as $\V$ must be trivial on $U$. By replacing
\begin{equation*}
   A^k_j(y) + X^i(y)\Gamma^k_{ij}(y) ~~~~~\rightsquigarrow~~~~~ A^k_j(y),
\end{equation*}
we need only to consider equations of the form \eqref{TheEquation3}.
\end{proof}

\medskip

By the above observations, we may assume that $M = \R^n$ and $\V = \R^n \times V$ for some fixed vector space $V$. Let $X \in \Cinf(U, \R^n)$ have a strictly positive source at $0$ and assume that $U$ is open and star-shaped around $0$ with respect to $X$. Let $A \in \Cinf(U, \End(V))$ and let $E(t, y)$ be the unique matrix solution of the problem
  \begin{equation*}
    \frac{\partial}{\partial t}E(t, y) = - A(\Phi_t(y))E(t, y), ~~~~~~~ E(0, y) = \id,
  \end{equation*}
$\Phi_t(y)$ denoting the flow of $X$. Then
  \begin{equation} \label{SolutionIdentities}
    E(s, \Phi_t(y)) = E(s+t, y)E(t, y)^{-1}
  \end{equation}  
which is easy to verify using uniqueness of solutions. Note also that $E(t, y)^{-1}$ solves the problem
\begin{equation*}
  \frac{\partial}{\partial t}[E(t, y)^{-1}] = E(t, y)^{-1} A(\Phi_t(y)), ~~~~~~~ E(0, y)^{-1} = \id,
\end{equation*}
so Thm.\ \ref{ThmODESolutionEstimate} gives similar estimates for $E^{-1}(t, y)$ as for $E(t, y)$.

\medskip

{\em The solution.} We claim that
  \begin{equation} \label{TheSolution}
    \fbox{$u(y) = \displaystyle\int_{-\infty}^0 E(s, y)^{-1} v(\Phi_s(y)) \dd s$}
  \end{equation}
 is a solution to the differential equation \eqref{TheEquationFlat}. Assuming for a moment that the integral converges absolutely for all $y \in U$, we calculate
 \begin{align*}
 u(\Phi_t(y)) &= \int_{-\infty}^0 E(s, \Phi_t(y))^{-1} v(\Phi_{s+t}(y)) \dd s
 \stackrel{\text{\eqref{SolutionIdentities}}}{=} \int_{-\infty}^0 \bigl(E(s+t, y)E(t, y)^{-1}\bigr)^{-1} v(\Phi_{s+t}(y)) \dd s \\
 &= \int_{-\infty}^0 E(t, y) E(s+t, y)^{-1} v(\Phi_{s+t}(y)) \dd s
 = \int_{-\infty}^t E(t, y) E(s, y)^{-1} v(\Phi_{s}(y)) \dd s
 \end{align*}
 for all $t$ and $y$ such that $\Phi_t(y) \in U$ so that
  \begin{align*}
  \DD_X u(y) &= \left.\frac{\dd}{\dd t} \right|_{t=0} u(\Phi_t(y))
  = \left.\frac{\dd}{\dd t} \right|_{t=0}\int_{-\infty}^t E(t, y) E(s, y)^{-1} v(\Phi_{s}(y)) \dd s \\
  &= v(\Phi_0(y)) - A(y) \int_{-\infty}^0 E(0, y) E(s, y)^{-1} v(\Phi_s(y)) \dd s
   = v(y) - A(y) u(y)
  \end{align*}
as desired. 

It remains to show that $u$ is well-defined, smooth and that it is the only flat solution. A key to this will be the following lemma.

\begin{lemma}[Characterization of flat Functions] \label{FlatSolutionsEstimate}
 Let $X \in \Cinf(U, \R^n)$ be a vector field with a strictly positive source at $0$ and assume that $U$ is star-shaped around $0$ with respect to $X$. Let 
 \begin{equation}
   \nu < \min \Re\, \mathrm{spec}\, \DD X|_0.
\end{equation}
 A function $u \in \Cinf(U, V)$ is in $\m^{N+1}$, the space of functions vanishing of order $N$ at $0$ (see Notation {\normalfont \ref{NotationPolynomialSpaces}}), if and only if for each compact neighborhood $K \subset U$ of $0$, there exists a constant $C>0$ such that
  \begin{equation} \label{EqVanishingIdealEstimate}
    |u(\Phi_t(y))| \leq Ce^{t N \nu}|y|^N
  \end{equation}
  for all $t\leq0$ and $y \in K$. Furthermore, $u$ is flat at $0$ if and only if for each $S \in \R$ and each compact neighborhood $K \subset U$, there exists a $C>0$ such that
  \begin{equation} \label{EqFlatsolutionsEstimate}
    |u(\Phi_t(y))| \leq C e^{St}
  \end{equation}
  for all $t\leq 0$ and $y \in K$.
\end{lemma}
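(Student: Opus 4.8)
The plan is to prove the two equivalences in turn. It is convenient to fix, for a compact neighbourhood $K$ of $0$, the enlarged set $\tilde K := \overline{\{\Phi_t(y)\mid t\le 0,\ y\in K\}}$, which by the star-shapedness of $U$ (Def.~\ref{DefStarshaped}) is a compact subset of $U$ and contains all the points at which estimates are needed. Corollary~\ref{FlowEstimatesII} (the case $A=\DD X$ of Thm.~\ref{ThmODESolutionEstimate}) provides, for every rate $\nu' < \min\Re\mathrm{spec}\,\DD X|_0$, the exponential contraction $|\Phi_t(y)| = |\Phi_t(y)-\Phi_t(0)| \le C e^{\nu' t}|y|$ for all $t\le 0$ and $y\in K$ (via the mean value theorem, using $\Phi_t(0)=0$).

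For the first equivalence, suppose $u$ vanishes to the stated order at $0$. Taylor's theorem gives $|u(x)|\le c_0|x|^{N+1}$ near $0$, hence $|u(x)|\le c_{\tilde K}|x|^{N+1}$ on all of $\tilde K$ by compactness; substituting $x=\Phi_t(y)$ and inserting the contraction bound yields $|u(\Phi_t(y))|\le C e^{(N+1)\nu' t}|y|^{N+1}$, and absorbing the surplus $|y|\le R_K$ together with $e^{(N+1)\nu' t}\le e^{tN\nu}$ (valid for $t\le 0$ once $\nu'$ is chosen with $(N+1)\nu'\ge N\nu$, which is possible since $\nu<\min\Re\mathrm{spec}\,\DD X|_0$) gives the asserted estimate. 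Conversely, evaluating the hypothesised estimate at $t=0$ gives $|u(y)|\le C|y|^N$ in a neighbourhood of $0$, which for a smooth function is precisely the required vanishing, i.e.\ $u\in\m^{N+1}$.

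The second equivalence is the main content. Its forward direction is the same computation: if $u$ is flat then $|u(x)|\le c_N|x|^N$ near $0$ for every $N$, so $|u(\Phi_t(y))|\le c_N' e^{tN\nu'}$ on $K$; given $S$, pick $N$ with $N\nu'\ge S$, so that $e^{tN\nu'}\le e^{tS}$ for $t\le 0$ (the case $S\le 0$ being trivial). For the converse — the heart of the lemma — one uses that small points leave $0$ no faster than exponentially. Fix an inner product $\|\cdot\|$ adapted to $L:=\DD X|_0$, so that $\langle Lz,z\rangle\ge c\|z\|^2$; then for $r_*$ small the sphere $\Sigma=\{\|z\|=r_*\}\subset U$ is transverse to $X$, $\tfrac{\dd}{\dd t}\|\Phi_t(y)\|^2>0$ along orbits inside it, and every $y$ with $\|y\|<r_*$ meets $\Sigma$ at a unique forward time $\tau(y)>0$. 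A Gr\"onwall estimate for $\tfrac{\dd}{\dd t}\|\Phi_t(y)\|^2$ built from a Lipschitz bound $L_X$ for $X$ near $0$ gives $r_*=\|\Phi_{\tau(y)}(y)\|\le \|y\|e^{L_X\tau(y)}$, hence $\tau(y)\ge L_X^{-1}\log(r_*/\|y\|)$. Writing $y=\Phi_{-\tau(y)}(y')$ with $y'=\Phi_{\tau(y)}(y)\in\Sigma$ and applying the hypothesis with the compact set $\Sigma$ at time $t=-\tau(y)\le 0$ yields, for every $S$,
\begin{equation*}
  |u(y)| \;=\; |u(\Phi_{-\tau(y)}(y'))| \;\le\; C_S\,e^{-S\tau(y)} \;\le\; C_S\,\bigl(\|y\|/r_*\bigr)^{S/L_X} .
\end{equation*}
Taking $S=L_X N$ gives $u(y)=O(|y|^N)$ near $0$ for every $N$; since $u$ is smooth, all its Taylor coefficients at $0$ vanish, i.e.\ $u$ is flat there.

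I expect the obstacle to be bookkeeping rather than any single idea: one must check that $\tilde K$ is genuinely precompact in $U$ (which rests on the uniform-on-compacta convergence $\Phi_t(y)\to 0$, itself a consequence of star-shapedness), set up the adapted inner product and the transverse sphere, and verify that forward orbits of small points really cross $\Sigma$ in finite time with the stated lower bound on the crossing time — all while keeping the dependence of the constants on $K$, $N$, $S$ (and the order of the quantifiers) under control.
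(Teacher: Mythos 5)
Your proof is correct in substance and, for the two forward implications, follows the same route as the paper: combine the Taylor bound $|u(x)|\le C|x|^{N}$ on a compact set with the exponential contraction $|\Phi_t(y)|\le Ce^{\nu t}|y|$. Where you genuinely diverge is in the converse of the flatness characterization, which is indeed the heart of the lemma. The paper argues by contraposition: it quotes from Perko the two-sided estimate $\delta e^{\mu t}|y|\le|\Phi_t(y)|\le Ce^{\nu t}|y|$ and claims that a non-flat $u$ satisfies $|u(y)|\ge C'|y|^{N}$ for all small $y$ --- a step that is delicate in several variables, since a non-flat function can vanish along entire backward orbits. You instead argue directly: the adapted inner product, the transverse sphere $\Sigma$, the Gr\"onwall bound $\tau(y)\ge L_X^{-1}\log(r_*/\|y\|)$ on the exit time, and the substitution $y=\Phi_{-\tau(y)}(y')$ convert the hypothesis $|u(\Phi_t(y'))|\le C_S e^{St}$ into $|u(y)|=O(|y|^{S/L_X})$ for every $S$. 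This amounts to a self-contained proof of the lower flow estimate that the paper only cites, and it sidesteps the problematic pointwise lower bound on $u$ entirely; the price is the exit-time analysis, which you correctly identify as the main bookkeeping burden. (Minor point: the hypothesis quantifies over compact \emph{neighbourhoods} of $0$, so apply it to a closed ball containing $\Sigma$ rather than to $\Sigma$ itself.)

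One step should be repaired. In the converse of the first equivalence you assert that $|u(y)|\le C|y|^{N}$ near $0$ "is precisely the required vanishing, i.e.\ $u\in\m^{N+1}$". With Notation \ref{NotationPolynomialSpaces}, the bound $|u(y)|\le C|y|^{N}$ only forces the Taylor coefficients of order $<N$ to vanish (consider $u=y_1^{N}$, which satisfies the bound but has a non-zero derivative of order $N$), i.e.\ it gives $u\in\m^{N}$; membership in $\m^{N+1}$ would require the exponent $N+1$. This off-by-one discrepancy is already present in the statement of the lemma (which describes $\m^{N+1}$ as "vanishing of order $N$") and in the paper's own converse argument, so your reasoning matches the intended reading, but as literally written the implication "$t=0$ evaluation $\Rightarrow u\in\m^{N+1}$" does not hold and should be stated for $\m^{N}$ (or the estimate sharpened to exponent $N+1$, which your forward direction in fact already delivers).
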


\begin{proof}
Because $X$ has a strictly positive source at $0$, for each compact neighborhood $K \subset U$ of $0$, there exist constants $C, \delta, \mu >0$ such that
\begin{equation*}
  \delta e^{\mu t}|y| \leq |\Phi_t(y)| \leq C e^{\nu t}|y|
\end{equation*}
for all $y \in K$ and all $t\leq 0$ (see e.g.\ \citep[p.\ 105ff]{perko91}).
Because $u \in \m^{N+1}$, there exists a constant $C^\prime>0$ such that $|u(y)| \leq C^\prime |y|^N$. Now for $t\leq 0$,
  \begin{equation*}
    |u(\Phi_t(y))| \leq C^\prime | \Phi_t(y)|^N \leq C^\prime C^N e^{\nu N t} |y|^{ N}.
  \end{equation*}
Conversely, if $u \notin \m^{N+1}$, then $|u(y)| \geq C^\prime |y|^{N-1}$ for some $C^\prime>0$ and $y$ near zero. This gives a contradiction with \eqref{EqVanishingIdealEstimate} by fixing $t$ and letting $y$ tend to zero.

By invoking this result for all $N$, one obtains one direction of the result on flat functions. Now let \eqref{EqFlatsolutionsEstimate} hold and assume that $u$ is not flat at zero. Then there is some $N \in \N$ such that for any $C^\prime>0$, we have $|u(y)| \geq C^\prime |y|^N$ if only $y$ is small enough (depending on $C^\prime$). In particular, for any $C^\prime$ and $y \in K$, we have $|u(\Phi_t(y))| \geq C^\prime |\Phi_t(y)|^N$ whenever $t$ is small enough, depending on $y$ and $C^\prime$. Hence
\begin{equation*}
  |u(\Phi_t(y))| \geq C^\prime |\Phi_t(y)|^N \geq C^\prime \delta^N e^{\mu N t} |y|^N
\end{equation*}
for such $t$. This shows that if we set $S := \mu N$, \eqref{EqFlatsolutionsEstimate} does not hold for all $t\leq0$, regardless of the choice of $C$.
\end{proof}

Let us now finish the proof of Theorem \ref{ThmFlat}. 

\medskip

{\em 1. Smoothness}. For $N \in N$, define
\begin{equation}
  u_N(y) := \int_{-N}^0 E(t, y)^{-1} v(\Phi_t(y)) \dd t.
\end{equation}
$E$, $\Phi_t$ and $v$ are all smooth in $y$, so we have clearly $u_N \in C^\infty(U, V)$. We want to show that for each compact subset $K$ of $U$ and each $m \in \N_0$, $u_N$ is a Cauchy sequence in $C^m(K, V)$. To this end, we compute
\begin{equation}
  \DD^\alpha u_N(y) = \sum_{\beta \leq \alpha} \binom{\alpha}{\beta}\int_{-N}^0 \DD^{\alpha-\beta}[E(t, y)^{-1}] \DD^\beta [ v(\Phi_t(y))] \dd t.
\end{equation}
With Thm.\ \ref{ThmODESolutionEstimate} and Corollary \ref{FlowEstimatesII}, this can be estimated by
\begin{equation*}
  |\DD^\alpha u_N(y) - \DD^\alpha u_M(y)| \leq C \sum_{|\alpha| \leq m}\left|\int_{-N}^{-M} e^{\nu t}|\DD^\alpha v(\Phi_t(y))| \dd t\right|
\end{equation*}
for some $C>0$ and $\nu \in \R$. Now by assumption, $v$ (and therefore also its derivatives) are flat at zero, so that we can apply Lemma \ref{FlatSolutionsEstimate}. Doing so, we choose $S$ such that $S + \nu > 0$ and constants $C_\alpha>0$,
\begin{equation*}
  |\DD^\alpha v(\Phi_t(y))| \leq C_\alpha e^{St}
\end{equation*}
for all $t \leq 0$ and $y \in K$ to obtain
\begin{equation*}
  |\DD^\alpha u_N(y) - \DD^\alpha u_M(y)| \leq C^\prime | e^{-N(S+\nu)} - e^{-M(S+\nu)}|,
\end{equation*}
for some new constant $C^\prime$ and all $y \in K$, which shows that $u_N$ is indeed a Cauchy sequence with respect to every $C^m(K)$ norm. 

\medskip

{\em Step 2: $u$ is flat.} By the same considerations as above, we have
\begin{equation*}
  |u(\Phi_s(y))| \leq C \int_{-\infty}^s e^{\nu t} |v(\Phi_{t}(y))| \dd t
\end{equation*}
for some $C>0$ and $\nu \in \R$. Given $S>0$, we can choose $S^\prime$ such that $S^\prime + \nu > S$ and $C^\prime$ such that $|v(\Phi_t(y))| \leq C^\prime e^{S^\prime t}$ to obtain
\begin{equation*}
  |u(\Phi_t(y))| \leq \frac{CC^\prime}{S^\prime+\nu} e^{(S^\prime + \nu)t} \leq C^{\prime\prime} e^{St}
\end{equation*}
for all $t \leq 0$ and $y \in K$. Again by \ref{FlatSolutionsEstimate}, $u$ is flat at $0$.

\medskip
  
{\em Step 3: Uniqueness.} Let $u_1, u_2$ be two flat functions satisfying $(\DD_X + A)u_i = v$. Then $w:= u_1 - u_2$ is flat and  a solution to the homogeneous equation. This means that for every $y \in U$, the function $t \mapsto w(\Phi_t(y))$ solves the ordinary differential equation
\begin{equation*}
  \frac{\partial}{\partial t} w(\Phi_t(y)) = - A(\Phi_t(y))\cdot w(\Phi_t(y)), ~~~~~~~ w(\Phi_0(y)) = w(y),
\end{equation*}
hence $w(\Phi_t(y)) = E(t, y)w(y)$. As remarked above, we can apply Thm.\ \ref{ThmODESolutionEstimate} to the inverse $E(t, y)^{-1}$ and obtain that there exists $C, \nu \in \R$ such that 
\begin{equation*}
|E(t, y)^{-1}u| \leq C e^{\nu t}|u|
\end{equation*}
for all $u \in V$, $t \leq 0$ and $y$ in a compact neighborhood $K$ of $p$. Setting $u=E(t, y)w(y)$, we get
\begin{equation*}
   | w(\Phi_t(y)) | = \left| E(t, y) w(y) \right|  \geq \frac{1}{C}  e^{-\nu t} |w(y)|
\end{equation*}
for all $t<0$ and by Lemma \ref{FlatSolutionsEstimate}, $w$ is not flat unless $w(y)=0$ for all $y$.

\medskip

The proof of Thm.\ \ref{ThmFlat} is now complete. $\Box$


\section{Power Series and Correspondence} \label{section3}

Because of the discussion at the beginning of section \ref{section2}, it suffices to consider the equation 
\begin{equation} \label{TheEquation4}
  (\DD_X + A)u = \lambda u + v, ~~~~~~~ u \in \Cinf(U, V)
\end{equation}
i.e.\ we assume $U \subseteq \R^n$ and $\V = U \times V$ with some fixed vector space $V$. Again fix a vector field $X \in \Cinf(U, \R^n)$ with a strictly positive source at $0$ and assume that $U$ is star-shaped around $0$ with respect to $X$. 

\begin{notation} \label{NotationPolynomialSpaces}
For an open neighborhood $U$ of $0$, define
\begin{equation*}
  \m^{N} := \left\{ u \in \Cinf(U, V) \mid \DD^\alpha u(0) = 0 ~\text{for all}~ |\alpha| < N\right\},
\end{equation*}
the space of functions that vanish to order $N$ at $0$ (this is a closed subspace). Set furthermore
\begin{equation*}
  \mathcal{P}_N := \Cinf(U, V) / \m^{N+1} ~~~~ \text{and} ~~~~ \mathcal{H}_N := \m^N / \m^{N+1}
\end{equation*}
for the factor spaces isomorphic to spaces of polynomials and homogeneous polynomials, respectively. Generic elements of $\mathcal{P}_N$ and $\mathcal{H}_N$ will be denoted with fat letters (e.g.\ $\boldsymbol{u}$) and for a function $u \in \Cinf(U, V)$, we write $[u]_N$ for the corresponding elements in the factor spaces (no confusion should arise here). Furthermore, by $[u]_\infty$, we denote the Taylor series of $u$ at $0$.
\end{notation}

Equation \eqref{TheEquation4} descends to an equation
\begin{equation} \label{TheEquation5}
  ([\DD_X + A]_\infty - \lambda)\sum_{\alpha} u_\alpha y^\alpha = [v]_\infty,
\end{equation}
on the space $V[[y]]$ of Taylor series with values in $V$, where $[\DD_X + A]_\infty$ is defined by the formula
\begin{equation*}
  [\DD_X + A]_\infty[u]_\infty = [(\DD_X + A)u]_\infty.
\end{equation*}
This is well-defined as $\DD_X + A$ maps flat functions to flat functions. Notice furthermore that the operator $[\DD_X + A]_N$ is also well-defined on both $\mathcal{P}_N$ and $\mathcal{H}_N$ by a similar formula because $\DD_X + A$ maps $\m^N$ to $\m^N$ for all $N \in N$.

The purpose of this section is to prove the following theorem, which implies Thm.\ \ref{ThmEigenvalues} and is an important step for the proof of Thm.\ \ref{ThmFredholm}.

\begin{theorem}[Correspondence] \label{ThmCorrespondence}
Let $\lambda \in \C$ and $N \in \N$. In the case that $\lambda$ is of the form
\begin{equation*}
  \lambda = \alpha_1 \mu_1 + \dots + \alpha_n \mu_n + \rho,
\end{equation*}
$\mu_1, \dots, \mu_n$ being the (generalized) eigenvalues of $\DD X|_0$ and $\rho$ being an eigenvalue of $A(0)$, suppose that $N$ is big enough such that for all representations of $\lambda$ in the above form, we have $|\alpha| \leq N$. Then there is a one-to-one correspondence between solutions of \eqref{TheEquation4} and solutions of the projected equation
\begin{equation} \label{TheEquationPr}
  [\DD_X + A]_N \boldsymbol{u} = \lambda \boldsymbol{u} + [v]_N
\end{equation}
with $\boldsymbol{u} \in \mathcal{P}_N$. More precisely, the map $u \mapsto [u]_N$ is an isomorphism of affine spaces between the solution spaces of \eqref{TheEquation4} and \eqref{TheEquationPr}.
\end{theorem}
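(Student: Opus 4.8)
The plan is to exhibit the map $\Psi\colon u \mapsto [u]_N$ as a well-defined affine isomorphism between the two solution spaces by constructing an explicit inverse using the flat-solution theorem (Thm.\ \ref{ThmFlat}). First, I would check that $\Psi$ maps solutions to solutions: if $u$ solves \eqref{TheEquation4}, then applying $[\,\cdot\,]_N$ to both sides and using that $[\DD_X + A]_N$ is well-defined on $\mathcal{P}_N$ (because $\DD_X + A$ preserves each $\m^N$) shows $[u]_N$ solves \eqref{TheEquationPr}. The content is in the other direction: given $\boldsymbol{u} \in \mathcal{P}_N$ solving \eqref{TheEquationPr}, I would choose any representative $w \in \Cinf(U,V)$ with $[w]_N = \boldsymbol{u}$, and then correct it by a flat section. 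Set $v_0 := \lambda w + v - (\DD_X + A)w$; by the choice of $\boldsymbol{u}$ we have $[v_0]_N = 0$, i.e.\ $v_0 \in \m^{N+1}$, but this is not yet enough — I need $v_0$ flat at $0$ to invoke Thm.\ \ref{ThmFlat}.

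To upgrade "vanishing to order $N$" to "flat", I would iterate: the key observation is that for $N' \geq N$, the projected equation on $\mathcal{P}_{N'}$ also has a solution extending $\boldsymbol{u}$. Concretely, because $\lambda$ admits no representation $\alpha_1\mu_1 + \dots + \alpha_n\mu_n + \rho$ with $|\alpha| > N$ (this is exactly the hypothesis on $N$), the operator $[\DD_X + A]_{N'} - \lambda$ restricted to the graded piece $\mathcal{H}_k$ for any $k > N$ is \emph{invertible}: its eigenvalues are precisely the numbers $\alpha\cdot\mu + \rho$ with $|\alpha| = k$, none of which equal $\lambda$. Hence one can solve the equation order by order in all degrees $k > N$, producing a formal power series $\hat u \in V[[y]]$ with $[\hat u]_N = \boldsymbol{u}$ solving the Taylor-series equation \eqref{TheEquation5}. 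By Borel's lemma choose $w \in \Cinf(U,V)$ with Taylor series $\hat u$; then $v_0 := \lambda w + v - (\DD_X + A)w$ has vanishing Taylor series at $0$, i.e.\ is flat. Apply Thm.\ \ref{ThmFlat} to get a unique flat $u_{\mathrm{fl}}$ with $(\DD_X + A)u_{\mathrm{fl}} = v_0 + \lambda u_{\mathrm{fl}}$ — here one should absorb the $\lambda u_{\mathrm{fl}}$ term, i.e.\ apply Thm.\ \ref{ThmFlat} to the operator $\DD_X + (A - \lambda)$, whose source/linearization data is unchanged (still $\DD X|_0$), so the theorem applies verbatim. Then $u := w + u_{\mathrm{fl}}$ solves \eqref{TheEquation4} and $[u]_N = [w]_N = \boldsymbol{u}$, giving a one-sided inverse to $\Psi$.

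Injectivity of $\Psi$ on the solution space is then immediate from the uniqueness clause of Thm.\ \ref{ThmFlat}: if $u_1, u_2$ both solve \eqref{TheEquation4} with $[u_1]_N = [u_2]_N$, then $u_1 - u_2$ is a flat solution of the homogeneous equation $(\DD_X + A - \lambda)(u_1 - u_2) = 0$, hence zero. This simultaneously shows $\Psi$ is a bijection and that the chosen $u$ above is the unique preimage, so the inverse is well-defined. Affineness is clear since $\Psi$ is the restriction of a linear map, and the solution spaces of both equations are affine subspaces (cosets of the corresponding homogeneous solution spaces); linearity of $\Psi$ and of its inverse on the difference spaces follows from the same argument applied with $v = 0$.

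The main obstacle I anticipate is the order-by-order solvability step — verifying cleanly that $[\DD_X + A]_k - \lambda$ acts on $\mathcal{H}_k$ with spectrum $\{\alpha\cdot\mu + \rho : |\alpha|=k,\ \rho \in \mathrm{spec}\,A(0)\}$, and hence is invertible for $k > N$ under the hypothesis. This requires identifying the induced operator on the associated graded: modulo higher order terms, $\DD_X$ acts on degree-$k$ polynomials as the linear vector field $\DD X|_0$ (the nonlinear corrections to $X$ raise the polynomial degree and so vanish in $\mathcal{H}_k$), and $A$ acts as the constant endomorphism $A(0)$; the eigenvalues of the resulting operator on $\mathrm{Sym}^k(\R^n)^* \otimes V$ are then the stated sums by a standard computation (diagonalize, or pass to Jordan form where the operator is block-triangular with the claimed diagonal). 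Care is needed with the non-diagonalizable case, but upper-triangularity suffices to read off the spectrum. Once this is in hand, everything else is bookkeeping with Thm.\ \ref{ThmFlat} and Borel's lemma.
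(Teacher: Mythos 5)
Your proposal is correct and follows essentially the same route as the paper: the trivial forward direction, then lifting a solution of the projected equation order by order through the graded pieces $\mathcal{H}_k$ (using that $[\DD_X+A]_k-\lambda$ is invertible there for $k>N$ by the Jordan-form/triangularity computation, which is the paper's Lemma \ref{LemmaEigenvalues}), and finally Borel's theorem plus Thm.\ \ref{ThmFlat} to convert the formal power series into the unique actual solution (the paper's Lemma \ref{LemmaOnPowerSeries}). The only slightly compressed step is injectivity, where "$[u_1]_N=[u_2]_N$ implies $u_1-u_2$ is flat" itself needs the same order-by-order vanishing argument before Thm.\ \ref{ThmFlat} applies, but that follows immediately from the machinery you already set up.
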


\begin{remark} \label{RemarkSolvability}
Notice that \eqref{TheEquationPr} is solvable if and only if $[v]_N \in \mathrm{im} ([\DD_X + A]_N - \lambda)$.
\end{remark}

The strategy of the proof of Thm.\ \ref{ThmCorrespondence} is the following: First we construct formal power series solutions of \eqref{TheEquation4}. Then we use Borel's theorem to obtain a function that solves \eqref{TheEquation4} up to a flat function. Finally we use Thm.\ \eqref{ThmFlat} to correct this function, giving an actual solution. Let us first recall Borel's theorem.

\begin{theorem}[Borel] \label{BorelsTheorem}
Let $\sum_\alpha u_\alpha y^\alpha \in V[[y]]$ be a formal power series with coefficients $u_\alpha \in V$, where $V$ is some Banach space. Then for each open neighborhood $U$ of $0$, there exists a function $u \in \Cinf(U, V)$ whose Taylor series at $0$ coincides with the given formal power series.
\end{theorem}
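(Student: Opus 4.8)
The plan is to use the classical cutoff construction. Fix $r>0$ with the open ball $B_r\subseteq U$ and pick a compactly supported $\chi\in\Cinf(\R^n)$ with $\chi\equiv1$ near $0$ and $\operatorname{supp}\chi\subseteq\{|y|<r\}$. For a sequence of numbers $t_\alpha\ge1$ still to be determined, set
\[
  u(y) := \sum_{\alpha\in\N_0^n} u_\alpha\,y^\alpha\,\chi(t_\alpha y),
\]
a series of smooth functions, the $\alpha$-th of which is supported in $\{|y|\le r/t_\alpha\}\subseteq\{|y|<r\}\subseteq U$. First I would record the Leibniz expansion
\[
  \DD^\beta\bigl(u_\alpha y^\alpha\chi(t_\alpha y)\bigr)=\sum_{\gamma\le\beta}\binom{\beta}{\gamma}\,u_\alpha\,\DD^\gamma(y^\alpha)\,t_\alpha^{|\beta|-|\gamma|}\,(\DD^{\beta-\gamma}\chi)(t_\alpha y),
\]
and observe that on the support one has $|y|\le r/t_\alpha$ and (for the non-vanishing terms) $|\gamma|\le|\alpha|$, so that in the regime $|\beta|<|\alpha|$ the factor $|y|^{|\alpha|-|\gamma|}\,t_\alpha^{|\beta|-|\gamma|}$ is bounded by $R^{|\alpha|}\,t_\alpha^{-1}$ with $R:=\max(r,1)$. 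Collecting the $\alpha$-dependent combinatorial constants, this yields an estimate $\bigl\|\DD^\beta\bigl(u_\alpha y^\alpha\chi(t_\alpha y)\bigr)\bigr\|_{\infty}\le M_{\alpha,\beta}/t_\alpha$ valid whenever $|\beta|<|\alpha|$.

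Next I would choose the $t_\alpha$: for each $\alpha$ there are only finitely many multi-indices $\beta$ with $|\beta|<|\alpha|$, so one may pick $t_\alpha\ge1$ large enough that $M_{\alpha,\beta}/t_\alpha\le 2^{-|\alpha|}$ for all of them simultaneously. Then, for every fixed $\beta$, the series $\sum_\alpha\DD^\beta\bigl(u_\alpha y^\alpha\chi(t_\alpha y)\bigr)$ converges uniformly on $\R^n$: all but the finitely many terms with $|\alpha|\le|\beta|$ are bounded in sup-norm by $2^{-|\alpha|}$, and $\sum_\alpha 2^{-|\alpha|}=\sum_{k\ge0}\#\{\alpha:|\alpha|=k\}\,2^{-k}<\infty$ because the number of multi-indices of a given degree grows only polynomially. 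The standard theorem on term-by-term differentiation of uniformly convergent series of $C^1$-maps — which holds verbatim for maps into the Banach space $V$ — applied inductively on $|\beta|$, then shows that $u\in\Cinf(U,V)$ and $\DD^\beta u=\sum_\alpha\DD^\beta\bigl(u_\alpha y^\alpha\chi(t_\alpha y)\bigr)$ for all $\beta$.

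Finally I would identify the Taylor series: near $0$ each summand coincides with $u_\alpha y^\alpha$, since $\chi(t_\alpha y)\equiv1$ there, so $\DD^\beta\bigl(u_\alpha y^\alpha\chi(t_\alpha y)\bigr)(0)=u_\alpha\,\DD^\beta(y^\alpha)|_{y=0}$, which equals $\beta!\,u_\beta$ when $\alpha=\beta$ and vanishes otherwise. Hence $\DD^\beta u(0)=\beta!\,u_\beta$ for every $\beta$, i.e.\ the Taylor series of $u$ at $0$ is $\sum_\beta u_\beta y^\beta$, as required. There is no deep obstacle here — the construction is routine — the only points needing genuine care are that the single choice of $t_\alpha$ must dampen \emph{all} derivatives of order $<|\alpha|$ of the $\alpha$-th term at once (which is possible precisely because there are only finitely many such derivatives), and that the interchange of summation and differentiation must be justified in the $V$-valued setting. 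I would flag the bookkeeping of the constants $M_{\alpha,\beta}$ as the main, and rather mild, technical chore.
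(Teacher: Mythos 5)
Your construction is the same one the paper sketches (a cutoff $\chi$ rescaled by a rapidly growing sequence applied to each monomial), except that you carry out in full the convergence estimates that the paper delegates to Narasimhan, and your bookkeeping is correct: the bound $r^{|\alpha|-|\gamma|}t_\alpha^{|\beta|-|\alpha|}\le R^{|\alpha|}t_\alpha^{-1}$ for $|\beta|<|\alpha|$, the choice of $t_\alpha$ handling the finitely many relevant $\beta$, and the inductive term-by-term differentiation all go through, including in the Banach-valued setting. No issues.
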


\begin{proof}[sketch]
Take a smooth cutoff function $\chi:[0,\infty) \longrightarrow [0,1]$ with $\chi \equiv 1$ on $[0,1]$ and $\chi \equiv 0$ on $[2, \infty)$. Now define
\begin{equation*}
  u(y) := \sum_{j=0}^\infty \chi(\gamma_{|\alpha|} \cdot |y|) u_\alpha y^\alpha
\end{equation*}
where $\gamma_j \longrightarrow \infty$. $u$ is well-defined, because the sum is in fact only a finite sum, and if one chooses the sequence $(\gamma_j)$ to increase fast enough, $u$ can be made smooth. For a proof that this is possible, see e.g.\ \citep{narashimhan1985}, Thm.\ 1.5.4 and the lemma before.
\end{proof}

\begin{lemma} \label{LemmaOnPowerSeries}
Suppose that the formal power series $\sum_\alpha u_\alpha y^\alpha$ solves \eqref{TheEquation5}. Then for each neighborhood $U$ of $0$ that is star-shaped with respect to $X$, there exists a unique function $u \in \Cinf(U, V)$ with $[u]_\infty = \sum_\alpha u_\alpha y^\alpha$ solving \eqref{TheEquation4}.
\end{lemma}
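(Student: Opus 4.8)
The statement combines the formal power-series solvability with Borel's theorem and the flat-solutions theorem from the previous section. The plan is to take the given formal solution $\sum_\alpha u_\alpha y^\alpha$ of \eqref{TheEquation5}, realize it by a genuine smooth function via Borel, and then correct the error term (which is automatically flat) using Thm.\ \ref{ThmFlat}.

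\emph{Existence.} First I would invoke Thm.\ \ref{BorelsTheorem} to choose some $\tilde u \in \Cinf(U, V)$ with $[\tilde u]_\infty = \sum_\alpha u_\alpha y^\alpha$; here $U$ may be taken as the given star-shaped neighborhood. Set $w := \lambda \tilde u + v - (\DD_X + A)\tilde u \in \Cinf(U, V)$. Since $\DD_X + A$ acts on Taylor series via $[\DD_X + A]_\infty$ and the formal series satisfies \eqref{TheEquation5}, we get $[w]_\infty = [v]_\infty - ([\DD_X + A]_\infty - \lambda)\sum_\alpha u_\alpha y^\alpha = 0$, i.e.\ $w$ is flat at $0$. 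Now apply Thm.\ \ref{ThmFlat} (in the version with $(\DD_X + A)$ in place of $(\nabla_X+A)$, using the trivialization already arranged at the start of Section \ref{section2}) to the operator $\DD_X + A - \lambda$ — note that $A - \lambda$ is again a smooth endomorphism field, so the theorem applies verbatim with $A$ replaced by $A - \lambda$ — to obtain a unique flat $u_0 \in \Cinf(U, V)$ with $(\DD_X + A - \lambda)u_0 = w$. Then $u := \tilde u + u_0$ satisfies $(\DD_X + A)u = (\DD_X+A)\tilde u + w = \lambda\tilde u + v + \lambda u_0 = \lambda u + v$, and $[u]_\infty = [\tilde u]_\infty + [u_0]_\infty = \sum_\alpha u_\alpha y^\alpha$ because $u_0$ is flat. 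This gives existence.

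\emph{Uniqueness.} Suppose $u_1, u_2 \in \Cinf(U,V)$ both solve \eqref{TheEquation4} and both have Taylor series $\sum_\alpha u_\alpha y^\alpha$ at $0$. Then $w := u_1 - u_2$ is flat at $0$ and solves the homogeneous equation $(\DD_X + A - \lambda)w = 0$. By the last assertion of Thm.\ \ref{ThmFlat} (again applied with $A - \lambda$ in place of $A$), the homogeneous equation has no non-trivial flat solution, so $w \equiv 0$, i.e.\ $u_1 = u_2$.

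\emph{Main obstacle.} There is no real analytical obstacle here — the heavy lifting was done in Thm.\ \ref{ThmFlat}. The only point requiring a small amount of care is the bookkeeping: checking that $w$ is genuinely flat, which amounts to verifying that the formal operator $[\DD_X + A]_\infty$ is compatible with the action of $\DD_X + A$ on Taylor series (already noted to be well-defined since $\DD_X + A$ preserves flat functions), and observing that replacing $A$ by $A - \lambda \cdot \id$ leaves all hypotheses of Thm.\ \ref{ThmFlat} intact.
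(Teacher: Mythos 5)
Your proof is correct and follows essentially the same route as the paper's: realize the formal series by Borel's theorem, observe that the remainder is flat, correct it by the unique flat solution from Thm.\ \ref{ThmFlat} (applied with $A-\lambda$ in place of $A$), and deduce uniqueness from the non-existence of non-trivial flat solutions of the homogeneous equation. The only blemish is a typo in your verification chain --- the middle term should read $(\DD_X+A)\tilde u + w + \lambda u_0$, since $(\DD_X+A)u_0 = w + \lambda u_0$ --- but the endpoints $\lambda u + v$ are right and the argument stands.
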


\begin{proof}
By Thm.\ \ref{BorelsTheorem}, there exists a function $\tilde{u} \in \Cinf(U, V)$ with Taylor series $\sum_\alpha u_\alpha y^\alpha$. Then
\begin{equation*}
  r := (\DD_X + A)\tilde{u} - \lambda \tilde{u} - v
\end{equation*}
is flat at $0$. By Thm.\ \ref{ThmFlat}, there exists a a unique flat function $w \in \Cinf(U, V)$ such that $(\DD_X + A- \lambda)w = r$. Set $u = \tilde{u} - w$, then
\begin{equation*}
  (\DD_X + A)u = (\DD_X + A)\tilde{u} - \lambda w - r = r + \lambda \tilde{u} + v - \lambda w - r = \lambda u +v.
\end{equation*}
To see that this solution is unique, assume we had two solutions $u_1, u_2$ with the same Taylor series $\sum_\alpha u_\alpha y^\alpha$ of \eqref{TheEquation4}. Then $w := u_1 - u_2$ is flat and solves the homogeneous equation
\begin{equation*}
  (\DD_X + A - \lambda)w = 0.
\end{equation*}
From uniqueness of solutions in Thm.\ \ref{ThmFlat}, we conclude $w \equiv 0$.
\end{proof}

\begin{lemma} \label{LemmaEigenvalues}
The generalized eigenvalues (i.e.\ zeros of the characteristic polynomial) of the operator $[\DD_X + A]_N$ on the space $\mathcal{P}_N$ are exactly the numbers
\begin{equation*}
  \lambda = \alpha_1 \mu_1 + \dots + \alpha_n \mu_n + \rho,
\end{equation*}
where $\mu_j$ are the eigenvalues of the linearization $\DD X|_0$, $\rho$ is an eigenvalue of $A(0)$ and $\alpha$ is a multi-index with $|\alpha| \leq N$.
\end{lemma}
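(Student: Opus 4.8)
The plan is to compute the action of the operator $[\DD_X + A]_N$ on the finite-dimensional space $\mathcal{P}_N$ by exhibiting a filtration on which it acts triangularly, so that its eigenvalues can be read off from the diagonal blocks. First I would write $\DD_X + A = \DD_{X_0} + A(0) + R$, where $X_0$ is the linear vector field $y \mapsto \DD X|_0 \cdot y$ and $R$ collects the higher-order terms: $R = \DD_{X - X_0} + (A - A(0))$. The point is that $X - X_0$ vanishes to order $2$ at $0$, so $\DD_{X-X_0}$ raises the degree of a homogeneous polynomial (it maps $\mathcal{H}_k$ into $\m^{k+1}$), and likewise $A - A(0)$ vanishes to order $1$, so multiplication by it sends $\mathcal{H}_k$ into $\m^{k+1}$. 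Hence, with respect to the filtration $\mathcal{P}_N \supset \m^1/\m^{N+1} \supset \m^2/\m^{N+1} \supset \dots \supset \m^N/\m^{N+1}$, the operator $R$ is strictly upper-triangular (it strictly increases filtration degree), so it is nilpotent and contributes nothing to the characteristic polynomial. Therefore the eigenvalues of $[\DD_X+A]_N$ on $\mathcal{P}_N$ coincide with the eigenvalues of $[\DD_{X_0} + A(0)]_N$, and since $\DD_{X_0}$ and the constant endomorphism $A(0)$ preserve each graded piece $\mathcal{H}_k$, it suffices to compute the spectrum of $\DD_{X_0} + A(0)$ acting on each $\mathcal{H}_k \cong \mathrm{Sym}^k(\R^n)^* \otimes V$ for $0 \le k \le N$, and take the union.

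Next I would identify the spectrum on a fixed $\mathcal{H}_k$. Since $A(0)$ acts only on the $V$-factor and $\DD_{X_0}$ acts only on the $\mathrm{Sym}^k(\R^n)^*$-factor, the operator on the tensor product is $L_k \otimes \id + \id \otimes A(0)$, where $L_k$ denotes the induced action of $\DD X|_0$ on homogeneous polynomials of degree $k$. The eigenvalues of a sum of commuting operators of this tensor form are the pairwise sums of eigenvalues, so I need the eigenvalues of $L_k$ together with those of $A(0)$, the latter being $\rho_1, \dots, \rho_m$ by definition. For $L_k$: choosing a basis of $\R^n$ that puts $\DD X|_0$ in Jordan form with eigenvalues $\mu_1, \dots, \mu_n$, the dual basis $y^1, \dots, y^n$ is (up to nilpotent corrections) an eigenbasis with $y^i$ having eigenvalue $\mu_i$, and then the monomial $y^\alpha$ with $|\alpha| = k$ is an eigenvector of the diagonal part with eigenvalue $\alpha_1 \mu_1 + \dots + \alpha_n \mu_n$; the off-diagonal Jordan entries again only add a nilpotent part that does not affect the characteristic polynomial. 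Hence the eigenvalues of $L_k$ are exactly $\{\alpha_1\mu_1 + \dots + \alpha_n\mu_n : |\alpha| = k\}$, with multiplicity. Combining, the eigenvalues of $[\DD_X + A]_N$ on $\mathcal{P}_N$ are precisely $\alpha_1\mu_1 + \dots + \alpha_n\mu_n + \rho_j$ with $|\alpha| \le N$ and $1 \le j \le m$, as claimed.

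The main obstacle is bookkeeping the nilpotent corrections carefully enough to be rigorous: one has to verify that (i) $R$ genuinely strictly increases the filtration degree on $\mathcal{P}_N$ — this uses that $\DD_{X-X_0}$ applied to a degree-$k$ polynomial has no term of degree $\le k$, because $X - X_0$ vanishes to second order and the derivative $\partial/\partial y^i$ only lowers degree by one; and (ii) that passing to Jordan form for $\DD X|_0$ and the induced convention of "generalized eigenvalue" (root of the characteristic polynomial with algebraic multiplicity) is compatible with the convention announced in Section \ref{section1_2}. Both are straightforward linear algebra, but they are exactly the places where an imprecise argument would be unconvincing, so I would state the filtration/triangularity claim as a short sub-lemma and verify the degree-shift property explicitly before invoking it. Once that is in place, the eigenvalue count is immediate from the structure of the characteristic polynomial of a block-triangular operator together with the tensor-sum formula for $L_k \otimes \id + \id \otimes A(0)$.
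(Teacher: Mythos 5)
Your proposal is correct and follows essentially the same route as the paper: split $[\DD_X+A]_N$ into the linear part $\DD_{X_0}+A(0)$ plus a remainder that strictly raises polynomial degree (hence is nilpotent and triangular with respect to the filtration by the $\m^k$), then read off the spectrum of the linear part from Jordan bases of $\DD X|_0$ and $A(0)$ acting on monomials $y^\alpha\cdot b_j$. The only cosmetic difference is that you phrase the linear part as a tensor sum $L_k\otimes\id+\id\otimes A(0)$ on each $\mathcal{H}_k$, whereas the paper exhibits the full Jordan basis $[z^\alpha b_j]_N$ explicitly (using complexified ``virtual'' coordinates to handle non-real $\mu_j$, a point worth making explicit in your write-up as well).
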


Before we give the proof, we calculate an example to which we will come back later.

\begin{example} \label{JordanNormalFormOnP2}
Let us assume we are in $\R^2$ and deal with scalar functions. Let $\phi(y) = \frac{1}{2} y_1^2+ y_1^2y_2 + y_2^2$ and let $X = \grad \phi$. A basis of the six-dimensional space $\mathcal{P}_2$ is 
\begin{equation} \label{BasisOfP2}
  [1], [y_1], [y_2], [y_1^2], [y_1y_2], [y_2^2].
\end{equation}
A little calculation shows that the matrix representation of $[\DD_X]_2$  with respect to this basis is
\begin{equation} \label{MatrixRepresentationOnP2}
[\DD_X]_2 \hat{=}
\begin{pmatrix}
0 &   &   &   &   & \\
0 & 1 &   &   &   & \\
0 & 0 & 2 &   &   & \\
0 & 0 & 1 & 2 &   & \\
0 & \circled{2} & 0 & 0 & 3 & \\
0 & 0 & 0 & 0 & 0 & 4
\end{pmatrix}.
\end{equation}
The general observation will be that the matrix representing $[\DD_X + A]_N$ with respect to a similar basis as \eqref{BasisOfP2} is a lower triangular matrix.

If one replaces the basis vector $[y_1]$ by $[y_1 - y_1y_2]$ in the basis \eqref{BasisOfP2}, one obtains the same matrix, except that the  number $2$ encircled in \eqref{MatrixRepresentationOnP2} disappears, so that $[\DD_X]_2$ is in Jordan normal form with respect to this new basis. In particular, this shows that $[\DD_X]_2$ is not diagonalizable, as the algebraic multiplicity of the eigenvalue $2$ is two, but its geometric multiplicity is just one. In fact, $[\DD_X]_N$ will not be diagonalizable on $\mathcal{P}_N$ for any $N$, which illustrates where the failure of $\DD_X$ to have the "expected" multiplicity comes from.
\end{example}

\begin{proof}[of Lemma {\normalfont \ref{LemmaEigenvalues}}]
Let $\DD X|_0 = (a_i^j)$ and set
\begin{equation*}
D_0 = a^j_i y^i \DD_j.
\end{equation*}
We then have
\begin{equation*}
  [\DD_X]_N = D_0 + \boldsymbol{D} ~~~~~~ \text{and} ~~~~~~ [A]_N = A(0) + \boldsymbol{A},
\end{equation*}
where both $\boldsymbol{D}$ and $\boldsymbol{A}$ increase the order of polynomials at least by $1$, hence are nilpotent on the space $\mathcal{P}_N$. 

In the following, we assume the Jordan normal form of an endomorphism to be a {\em lower} triangular matrix.

Choose (Euclidean) coordinates $y$ such that $\DD X|_0$ is in real Jordan normal form with respect to the basis $\partial_{y^j}$ of $T_0 U$ and enumerate the eigenvalues of $\DD X|_0$ such that $\mu_1, \dots, \mu_{2r}$ have non-vanishing imaginary part with $\mu_{2j-1} = \overline{\mu_{2j}}$ for $j = 1, \dots, r$ and $\mu_{2r+1}, \dots, \mu_n$ are real. Define the "virtual" coordinate functions
\begin{align*}
  z_{2j-1} &:= y_{2j-1} + i y_{2j}, ~~~ \text{and} ~~~ z_{2j} := y_{2j-1} - i y_{2j}  ~~~~~~~& &\text{for}~~~~1 \leq j \leq r \\
  z_j &:= y_j& &\text{for}~\,~2r < j \leq n.
\end{align*}
Then $\DD X|_0$ is in complex Jordan normal form with respect to the basis $\partial_{z_j}$, $j=1, \dots, n$. 

Let $b_1, \dots, b_m$ be a generalized eigenbasis of $A(0)$ to the eigenvalues $\rho_1, \dots \rho_m$, i.e.\ a basis such that $A(0)$ is in Jordan normal form with respect to this basis. Then the elements
\begin{equation} \label{BasisOfPN}
  [z^\alpha \cdot b_j]_N, ~~~~~ j = 1, \dots, m, ~~~~~ |\alpha| \leq N 
\end{equation}
form a basis of $\mathcal{P}_N$ that can be ordered in such a way (non-decreasingly in $|\alpha|$) that the the corresponding matrix representing the operator $D_0$ is in Jordan normal form, with eigenvalues 
\begin{equation*}
  \alpha_1 \mu_1 + \dots + \alpha_n \mu_n, ~~~~~|\alpha| \leq N.
\end{equation*}
The operator $\DD_0 + A(0)$ is in Jordan normal form as well with respect to this basis, and it has the eigenvalues
\begin{equation*}
  \alpha_1 \mu_1 + \dots + \alpha_n \mu_n + \rho_j, ~~~~~|\alpha| \leq N, ~~j = 1, \dots m.
\end{equation*}
Finally, 
\begin{equation*}
  [\DD_X + A]_N = D_0 + A(0) + \boldsymbol{D} + \boldsymbol{A}
\end{equation*}
has the same eigenvalues because $\boldsymbol{D}$ and $\boldsymbol{A}$ have only entries below the diagonal with respect to the basis above.
\end{proof}

\begin{proof}[of Thm.\ \ref{ThmCorrespondence}]It is clear that whenever $u \in \Cinf(U, V)$ solves \eqref{TheEquation4}, then $[u]_N \in \mathcal{P}_N$ solves \eqref{TheEquationPr}. We are left to show the converse. 

Suppose we are given a solution $\boldsymbol{u} \in \mathcal{P}_N$ of \eqref{TheEquationPr}. Choose a representative $u_N \in \Cinf(U, V)$ of $\boldsymbol{u}$, such that $[u_N]_N = \boldsymbol{u}$. Then
\begin{equation*}
  (\DD_X + A - \lambda)u_N - v =: r_{N+1} \in \m^{N+1}.
\end{equation*}
Now we construct a solution recursively. Suppose we have found $u_N, u_{N+1}, \dots u_{N+k-1} \in \Cinf(U, V)$ such that
\begin{equation*}
  (\DD_X + A - \lambda)(u_N + \dots + u_{N+k-1}) - v =: r_{N+k} \in \m^{N+k}.
\end{equation*}
Then we want to solve the equation
\begin{equation}
  ([\DD_X + A]_{N+k}- \lambda)\boldsymbol{u}_{N+k} = - [r_{N+k}]_{N+k},
\end{equation}
on the space $\mathcal{H}_{N+k}$. On this space, $[\DD_X + A]_{N+k}$ is in Jordan normal form with respect to the basis
\begin{equation*}
  [z^\alpha \cdot b_j] ~~~~~~ |\alpha|=N+k, ~~ j = 1, \dots, m,
\end{equation*}
as discussed the proof of \ref{LemmaEigenvalues}. The eigenvalues are the numbers
\begin{equation*}
  \alpha_1 \mu_1 + \dots + \alpha_n \mu_n + \rho_j, ~~~~~~ |\alpha|=N+k, ~~ j = 1, \dots, m.
\end{equation*}
This shows that that by the assumption on $N$, $\lambda$ is not an eigenvalue and $[\DD_X + A]_{N+k}- \lambda$ is invertible on $\mathcal{H}_{N+k}$, allowing us to set
\begin{equation*}
  \boldsymbol{u}_{N+k} := - ([\DD_X + A]_{N+k}- \lambda)^{-1}[r_{N+k}]_{N+k} \in \mathcal{H}_{N+k}.
\end{equation*}
Now choose a representative $u_{N+k}$ of $\boldsymbol{u}_{N+k}$ in $\Cinf(U, V)$; by construction,
\begin{equation*}
  (\DD_X + A - \lambda)(u_N + \dots + u_{N+k}) - v =: r_{N+k+1} \in \m^{N+k+1}.
\end{equation*}
Denote by $[u_{N+k}]_\infty$ the Taylor series of the respective terms. Since for each $k$, $[u_{N+k}]_\infty$ does not contain terms of order lower than $N+k$, the sum $\sum_{k=0}^\infty [u_{N+k}]_\infty =: \sum_{\alpha} u_\alpha y^\alpha$ defines an element of $V[[x]]$ solving \eqref{TheEquation5}. The Taylor series does not depend on the choices of the individual $u_{N+k}$, because for two such choices $u_N, u_{N+1}, \dots$, and $\tilde{u}_{N}, \tilde{u}_{N+1}, \dots$, we have by construction
\begin{equation*}
  \sum\nolimits_{k=0}^M u_{N+k} - \sum\nolimits_{k=0}^M \tilde{u}_{N+k} \in \m^{N+M+1}.
\end{equation*}
By Lemma \ref{LemmaOnPowerSeries}, the power series constructed this way gives rise to a unique solution $u \in \Cinf(U, V)$ of \ref{TheEquation4}.
\end{proof}

We can now prove Thm.\ \ref{ThmEigenvalues}.

\begin{proof}[of Thm.\ \ref{ThmEigenvalues}]
Suppose that $\lambda$ is not of the form \eqref{TheEigenvalue}. Then by Thm.\ \ref{ThmCorrespondence}, every solution of $(\DD_X + A)u = \lambda u$ solves also
\begin{equation*}
  [\DD_X + A]_1[u]_1 = \lambda[u]_1
\end{equation*}
on $\mathcal{P}_1$. By Lemma \ref{LemmaEigenvalues}, $[u]_1=0$ and then by Thm.\ \ref{ThmCorrespondence}, $u=0$, so $\lambda$ is not an eigenvalue of $\DD_X + A$.

If $\lambda$ is of the form \eqref{TheEigenvalue} with "combinatorial multiplicity" $m(\lambda)$, then for $N$ big enough as in Thm.\ \ref{ThmCorrespondence}, it is an eigenvalue on all spaces $\mathcal{P}_N$ of algebraic multiplicity $m(\lambda)$, so there exists a geometrical eigenspace to $\lambda$ of dimension at least $1$ and at most $m(\lambda)$ and by Thm.\ \ref{ThmCorrespondence}, the same is true on $\Gamma^\infty(U, \V)$.

Conversely, if $\lambda$ is an eigenvalue of $\DD_X + A$, then it must be of the form \eqref{TheEigenvalue}. Again by Thm.\ \ref{ThmCorrespondence}, the dimension of $\ker (\DD_X + A - \lambda)$ cannot be greater than $m(\lambda)$.
\end{proof}


\section{The dual Operator} \label{section3_5}

We now consider the dual equation \eqref{DualEquation}. 

\begin{remark} \label{RemarkDualOperator}
The dual operator is given by
\begin{equation*}
 (\nabla_X + A)^\prime = -\nabla_X^* + A^\prime - \div(X),
\end{equation*}
where $\nabla^*$ is the connection on $\V^*$ induced by $\nabla$. This formula can be taken literally when acting on {\em smooth} distributions $\Gamma_c^\infty(U, \V^*) \subset \Eh^\prime(U, \V^*)$.
\end{remark}

\begin{lemma}
Under the assumptions of Thm.\ \ref{ThmFlat}, let $T \in \Eh^\prime(U, \V^*)$ be a solution of the dual equation
\begin{equation} \label{DualEquation2}
  (\nabla_X + A)^\prime T = \lambda T.
\end{equation}
Then $\mathrm{supp}\, T \subseteq \{p\}$.
\end{lemma}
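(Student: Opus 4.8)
The plan is to show that any solution $T$ of the dual equation is supported at the single point $p$, using the fact (established in Theorem \ref{ThmFlat}) that the primal operator $\nabla_X + A - \lambda$ is surjective onto the space of sections flat at $p$. First I would recall the duality: a distribution $T$ is supported in $\{p\}$ if and only if it annihilates every section that is flat at $p$, i.e.\ $T(v) = 0$ for all $v$ in the closed subspace $\m^\infty := \bigcap_N \m^N$ of sections all of whose derivatives vanish at $p$. (Strictly, one should work in a star-shaped coordinate chart; by Lemma \ref{LemmaSufficient} this loses nothing. One then uses the standard fact that a compactly supported distribution vanishing to infinite order along with all test functions flat at a point is concentrated there — more precisely, the orthogonal complement of $\m^\infty$ is exactly the span of $\delta_p$ and its derivatives.)

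Next I would exploit Theorem \ref{ThmFlat}: given any section $v \in \Gamma^\infty(U,\V)$ that is flat at $p$, there exists a \emph{flat} section $u \in \Gamma^\infty(U,\V)$ with $(\nabla_X + A - \lambda)u = v$. Then, using that $T$ solves the dual equation \eqref{DualEquation2},
\begin{equation*}
  T(v) = T\bigl((\nabla_X + A - \lambda)u\bigr) = \bigl((\nabla_X + A)^\prime T - \lambda T\bigr)(u) = 0 .
\end{equation*}
Since this holds for every $v$ flat at $p$, the distribution $T$ annihilates all of $\m^\infty$, hence $\mathrm{supp}\, T \subseteq \{p\}$.

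The one point requiring care — and the main obstacle — is the pairing identity $T\bigl((\nabla_X+A-\lambda)u\bigr) = \bigl((\nabla_X+A)^\prime T - \lambda T\bigr)(u)$: the dual operator in Remark \ref{RemarkDualOperator} is \emph{defined} so that this holds for all smooth sections $u$, but one must make sure $u$ being merely smooth (not compactly supported) is admissible in the pairing with the compactly supported distribution $T$. This is fine because $T \in \Eh^\prime(U,\V^*)$ pairs with all of $\Gamma^\infty(U,\V)$, and the definition of $(\nabla_X+A)^\prime$ extends from $\Gamma^\infty_c$ to this pairing by continuity/density. The remaining ingredient — that the annihilator of $\m^\infty$ consists precisely of distributions supported at $p$ — is the classical structure theorem for distributions supported at a point (see e.g.\ any standard reference on distributions), and needs only that $\m^\infty$ is the set of smooth sections flat at $p$ in local coordinates.
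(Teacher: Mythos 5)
Your argument is correct and follows essentially the same route as the paper: apply Thm.\ \ref{ThmFlat} (with $A$ replaced by $A-\lambda$) to produce a flat preimage $u$ of any flat $v$, then use the defining property of the transpose to get $T(v)=T\bigl((\nabla_X+A-\lambda)u\bigr)=0$. The paper reaches the conclusion slightly more directly by testing only against $v$ supported in $U\setminus\{p\}$ (which is automatically flat at $p$), so that $\mathrm{supp}\,T\subseteq\{p\}$ follows from the definition of support without invoking the structure theorem for the annihilator of the flat sections; that characterization is reserved for the subsequent Corollary \ref{CorollaryDelta}.
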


\begin{proof}
$T$ solves \eqref{DualEquation2} if and only if for all $u \in \Gamma^\infty(U, \V)$, we have
\begin{equation*}
  T\bigl( (\nabla_X + A - \lambda) u\bigr) = 0.
\end{equation*}
Let $v$ have support in $U \setminus \{p\}$. Then $v$ is flat at $0$, hence by Thm. \ref{ThmFlat}, there exists a unique flat function $u$ such that $(\nabla_X + A - \lambda) u = v$. Hence $T(v) = 0$ for all functions with compact support in $U \setminus \{p\}$, which is the statement.
\end{proof}

The standard characterization result of distributions with discrete supports gives the following corollary.

\begin{corollary} \label{CorollaryDelta}
Each solution $T$ of \eqref{DualEquation2} is a finite linear combination of derivatives of delta distributions $\delta_\xi$, where
\begin{equation*}
  \delta_\xi(u) = \xi(u(p)), ~~~~~~~ u \in \Gamma^\infty(U, \V).
\end{equation*}
for $\xi \in \V_p^*$.
\end{corollary}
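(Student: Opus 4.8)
The plan is to invoke the classical structure theorem for distributions whose support is a single point, together with the previous lemma. By the preceding lemma, any solution $T$ of \eqref{DualEquation2} has $\mathrm{supp}\, T \subseteq \{p\}$. Since $\V^*$ is locally trivial, near $p$ we may identify $\Eh^\prime(U, \V^*)$ with the space $\Eh^\prime(U_0)\otimes \V_p^*$ for a small chart neighborhood $U_0$ of $p$; concretely, after choosing a local frame $\xi^1, \dots, \xi^m$ of $\V^*$ near $p$, a compactly supported $\V^*$-valued distribution is an $m$-tuple of scalar distributions. So it suffices to recall that a scalar distribution on (a neighborhood of $0$ in) $\R^n$ supported at $\{0\}$ is a finite linear combination of the delta distribution and its derivatives; this is the standard result, see e.g.\ \citep[Thm.\ 2.3.4]{hoermander1} or any reference on distribution theory. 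Evaluating the frame components at $p$ and re-packaging, one obtains that $T$ is a finite sum of terms of the form $u \mapsto \partial^\alpha\bigl(\xi(u(\cdot))\bigr)|_{p}$, i.e.\ a finite linear combination of derivatives of the $\delta_\xi$'s with $\xi \in \V_p^*$.

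The only point requiring a word of care is that the notion ``derivative of $\delta_\xi$'' is a priori chart-dependent for a vector-bundle-valued distribution, whereas the statement should not depend on choices; but since we only claim that $T$ lies in the (finite-dimensional, for fixed bound on the order) span of such derivatives, and a change of frame or of local coordinates transforms derivatives of delta distributions into linear combinations of derivatives of delta distributions of the same or lower order, the span is well-defined and the assertion is unambiguous. In particular no genuine analytic work is needed here beyond the cited structure theorem.

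I do not expect any real obstacle: the lemma does all the substantive work by pinning the support down to $\{p\}$, and the corollary is then a direct quotation of a textbook fact. The only thing to be slightly careful about is bookkeeping in the passage from the scalar statement to the $\V^*$-valued one, which is handled by a local frame as indicated above.
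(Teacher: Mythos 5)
Your proposal is correct and follows exactly the route the paper takes: the preceding lemma pins $\mathrm{supp}\, T$ down to $\{p\}$, and the corollary is then the standard structure theorem for distributions supported at a point, transferred to the bundle-valued setting via a local frame. The remark about chart-independence of the span of derivatives of delta distributions is a reasonable extra precaution but not needed beyond what the paper's one-line citation already covers.
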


Again we may assume that $U \subseteq \R^n$ open, that the vector field $X \in \Cinf(U, \R^n)$ has a strictly positive source at $0$ and that we consider functions with values in some fixed vector space $V$. Given a solution $T$ of $(\DD_X + A)^\prime T = \lambda T$, it has finite order $\leq N$ as a distribution. By the above corollary, we have $T(\m^{N+1})=0$ for this $N$, so $T$ descends to an element $[T]_N$ of $\mathcal{P}_N^\prime$. 

\begin{theorem}[Dual Correspondence] \label{ThmCorrespondence2}
Let $\lambda \in \C$ and $N \in \N$. In the case that $\lambda$ is of the form
\begin{equation*}
  \lambda = \alpha_1 \mu_1 + \dots + \alpha_n \mu_n + \rho,
\end{equation*}
$\mu_1, \dots, \mu_n$ being the eigenvalues of $\DD X|_0$ and $\rho$ being an eigenvalue of $A(0)$, suppose that $N$ is big enough such that for all representations of $\lambda$ in the above form with a multi-index $\alpha$ and an eigenvalue $\rho$ of $A(0)$, we have $|\alpha| \leq N$. 

Then there is a one-to-one correspondence between solutions of the dual equation \eqref{DualEquation2} and the projected equation
\begin{equation} \label{DualEquationPr}
  [\DD_X + A]_N^\prime \boldsymbol{T} = \boldsymbol{T} \circ [\DD_X + A]_N = \lambda \boldsymbol{T}
\end{equation}
on $\mathcal{P}_N^\prime$. More precisely, the map $T \mapsto [T]_N$ is an isomorphism between the corresponding eigenspaces.
\end{theorem}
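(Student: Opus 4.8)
The plan is to mirror the structure of the proof of Theorem \ref{ThmCorrespondence}, but in the dual (distributional) category. The key observation is that by Corollary \ref{CorollaryDelta}, any solution $T$ of the dual equation \eqref{DualEquation2} is supported at $p = 0$ and hence is a finite linear combination of derivatives of delta distributions; in particular it has some finite order $\leq N$, so it factors through the quotient map $\Cinf(U,V) \to \mathcal{P}_N$ and descends to $[T]_N \in \mathcal{P}_N^\prime$. Conversely, any $\boldsymbol{T} \in \mathcal{P}_N^\prime$ pulls back along this quotient map to a distribution supported at $0$. So the map $T \mapsto [T]_N$ is already a bijection between distributions supported at $0$ of order $\leq N$ and elements of $\mathcal{P}_N^\prime$; what must be checked is that it intertwines the two eigenvalue equations.

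First I would verify that the correspondence is well-defined and respects the equations. If $T$ solves $(\DD_X+A)^\prime T = \lambda T$, then for every $u \in \Cinf(U,V)$ we have $T((\DD_X + A - \lambda)u) = 0$. Since $\DD_X + A$ maps $\m^{N+1}$ into itself (as noted after \eqref{TheEquation5}), the operator $[\DD_X+A]_N$ on $\mathcal{P}_N$ is well-defined, and the relation $T((\DD_X+A-\lambda)u) = 0$ descends to $[T]_N(([\DD_X+A]_N - \lambda)\boldsymbol{u}) = 0$ for all $\boldsymbol{u} \in \mathcal{P}_N$, which is exactly \eqref{DualEquationPr}. This direction is essentially formal once one has the finite-order statement of Corollary \ref{CorollaryDelta} in hand.

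The substantive direction is the converse: given $\boldsymbol{T} \in \mathcal{P}_N^\prime$ solving \eqref{DualEquationPr}, I must produce a genuine distributional solution $T$ of \eqref{DualEquation2} with $[T]_N = \boldsymbol{T}$, and show it is unique. Define $T := \boldsymbol{T} \circ \pi_N$ where $\pi_N : \Cinf(U,V) \to \mathcal{P}_N$ is the quotient projection; this is a compactly supported distribution of order $\leq N$ with $[T]_N = \boldsymbol{T}$. To see that $T$ solves \eqref{DualEquation2}, I need $T((\DD_X + A - \lambda)u) = 0$ for all $u \in \Gamma^\infty(U,\V)$, i.e.\ $\boldsymbol{T}(\pi_N((\DD_X+A-\lambda)u)) = 0$. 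Now $\pi_N((\DD_X+A-\lambda)u)$ depends only on $[u]_N = \boldsymbol{u}$ (again because $\DD_X + A$ preserves $\m^{N+1}$), and equals $([\DD_X+A]_N - \lambda)\boldsymbol{u}$; so the required identity is precisely that $\boldsymbol{T}$ annihilates the image of $[\DD_X+A]_N - \lambda$ on $\mathcal{P}_N$, which is the hypothesis \eqref{DualEquationPr}. Uniqueness is immediate: if $T_1, T_2$ both solve \eqref{DualEquation2} with $[T_1]_N = [T_2]_N$, then since both are supported at $0$ with order $\leq N$ they are determined by their action on $\mathcal{P}_N$, hence $T_1 = T_2$; and one must note that the choice of $N$ (any $N$ satisfying the combinatorial bound, which dominates the order of every dual eigendistribution) can be taken uniformly, so the eigenspaces on both sides are literally identified.

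The main obstacle is making the order bound uniform and clean: a priori each solution $T$ has its own order $N_T$, and one must argue that for $N$ chosen as in the hypothesis (large enough that $|\alpha| \leq N$ in every representation $\lambda = \alpha_1\mu_1 + \dots + \alpha_n\mu_n + \rho$), every dual eigendistribution has order $\leq N$ — otherwise the correspondence would not be onto a single fixed space $\mathcal{P}_N^\prime$. This follows from the analogue of Lemma \ref{LemmaEigenvalues} for the transposed operator $[\DD_X+A]_N^\prime$ on $\mathcal{P}_N^\prime$: transposition preserves the characteristic polynomial, so $[\DD_X+A]_N^\prime$ and $[\DD_X+A]_N$ have the same (generalized) eigenvalues, and the "stabilization" argument — that a $\lambda$-eigendistribution on $\mathcal{P}_{N'}^\prime$ for $N' > N$ must already be pulled back from $\mathcal{P}_N^\prime$ — is the dual of the recursive vanishing argument used at the end of the proof of Theorem \ref{ThmCorrespondence}, now exploiting that $[\DD_X+A]_{N+k} - \lambda$ is invertible on each $\mathcal{H}_{N+k}$ for $k \geq 1$, hence its transpose is too, forcing the homogeneous-degree components of $\boldsymbol{T}$ beyond degree $N$ to vanish. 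Once this uniform order bound is established, everything else is the formal bookkeeping sketched above.
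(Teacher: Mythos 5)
Your proposal is correct and follows essentially the same route as the paper: both directions rest on Corollary \ref{CorollaryDelta} plus the fact that $\DD_X+A$ preserves $\m^{N+1}$, and your degree-by-degree vanishing argument for the order bound (invertibility of $[\DD_X+A]-\lambda$ on each $\mathcal{H}_{N+k}$, $k\geq 1$, killing the high-order components of a left eigenvector) is exactly the paper's block lower-triangular argument $w^t(M-\lambda)=0\Rightarrow w=0$, phrased recursively. No gaps.
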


\begin{proof}
Suppose that $T$ solves \eqref{DualEquation2}. By Cor.\ \ref{CorollaryDelta}, it is a finite linear combination of derivatives of Delta distributions, so it factors over the space $\mathcal{P}_N$ for $N$ big enough, i.e.\ for all $u \in C^\infty(U, V)$, we have $T(u) = \boldsymbol{T}([u]_N)$ for some $\boldsymbol{T} \in \mathcal{P}_N^\prime$.

Let $N^\prime$ be the lowest number that fulfills the requirement of the theorem. We first show that the distribution order of $T$ cannot be greater than $N^\prime$. As argued before, $T$ descends to an element $[T]_N \in \mathcal{P}^\prime_N$ for some $N \geq N^\prime$. With respect to the basis \eqref{BasisOfPN} of $\mathcal{P}_N$, $[T]_N$ is represented by a row vector and is a left eigenvector to $[\DD_X + A]_N$. As argued in the proof of lemma\ \ref{LemmaEigenvalues}, $[\DD_X + A]_N$ has the form
\begin{equation*}
  [\DD_X + A]_N ~\widehat{=}~ \begin{pmatrix} L & 0 \\ * & M \end{pmatrix},
\end{equation*}
with respect to this basis, where $L$ is an $N^\prime \times N^\prime$ matrix and $M$ does not have the eigenvalue $\lambda$. If now
\begin{equation*}
  [T]_N ~\widehat{=}~ ( v^t, w^t ), ~~~~~~ v \in \C^{N^\prime}, w \in \C^{N-N^\prime},
\end{equation*}
with respect to the same basis, then the fact that $T$ solves \eqref{DualEquation2} implies in particular $w^t (M-\lambda) = 0$ as $[T]_N$ is a left eigenvector. But this means $w=0$, as $M$ does not have the eigenvalue $\lambda$. Therefore, $T$ is actually of order less or equal to $N^\prime$ as a distribution.

This shows that every solution $T$ of \eqref{DualEquation2} descends to a unique solution $[T]_N$ of \eqref{DualEquationPr} in $\mathcal{P}_N$ for every $N \geq N^\prime$. 

Conversely, let $\boldsymbol{T}$ be a solution of \ref{DualEquationPr}. Then $\boldsymbol{T}$ defines a solution $T \in \Eh^\prime(U, V^*)$ of \eqref{DualEquation2} by setting $T(u) = \boldsymbol{T}([u]_N)$ for $N$ big enough. By the discussion before, $T$ is uniquely determined by this property.
\end{proof}

\begin{remark}
The solutions of the dual equation \eqref{DualEquation2} can be completely calculated, as one only needs to calculate finitely many terms, while the Taylor expansion of solutions $u$ of $(\DD_X + A)u = \lambda u$ has infinitely many terms in general.
\end{remark}

The proof of the theorem about the Fredholm alternative \ref{ThmFredholm} is now a consequence of the two correspondence theorems \ref{ThmCorrespondence} and \ref{ThmCorrespondence2}.

\begin{proof}[of Thm.\ \ref{ThmFredholm}]
Suppose first that $\lambda$ is not an eigenvalue (i.e.\ one of the numbers from Thm.\ \ref{ThmEigenvalues}). Then by Thm. \ref{ThmCorrespondence}, there is a one-to-one correspondence between solutions of \eqref{TheEquation4} and solutions of
\begin{equation*}
  ([\DD_X + A]_1 - \lambda) \boldsymbol{u} = [v]_1.
\end{equation*}
But by Lemma \ref{LemmaEigenvalues}, $\lambda$ is not an eigenvalue of $[\DD_X + A]_1$, so this equation has a unique solution.

Let now $\lambda$ be an eigenvalue. Then by Thm.\ \ref{ThmCorrespondence}, the dimension $k$ of the kernel of $\DD_X + A - \lambda$ coincides with the dimension of the kernel of $[\DD_X + A - \lambda]_N$ on $\mathcal{P}_N$, whenever $N$ is large enough. On the other hand, for every endomorphism of a finite-dimensional vector space, the number $k$ of right eigenvectors and left eigenvectors to a given eigenvalue coincide. Therefore, there are also $k$ solutions $\boldsymbol{T} \in \mathcal{P}_N^\prime$ of \eqref{DualEquationPr}, which by Thm.\ \ref{ThmCorrespondence2} give exactly the solutions of the dual equation \eqref{DualEquation2}.

The Fredholm alternative of linear algebra states that \eqref{TheEquationPr} is solvable if and only if $\boldsymbol{T}([v]_N) = 0$ for each $\boldsymbol{T} \in \ker ([\DD_X + A]_N^\prime- \lambda)$. But this means exactly that
\begin{equation*}
  v \in \ker ((\DD_X + A)^\prime - \lambda)_\perp.
\end{equation*}
\end{proof}

\begin{corollary} \label{CorollaryClosed}
For any open neighborhood $U$ that is starshaped around the strictly positive source $p$ of $X$, the operator $\nabla_X + A  - \lambda$ has closed range for every $\lambda \in \C$ and is a Fredholm operator of index $0$.
\end{corollary}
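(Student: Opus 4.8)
The plan is to derive the statement directly from the Fredholm alternative, Thm.~\ref{ThmFredholm}, treating separately the two cases it provides. Fix $\lambda \in \C$ and write $P := \nabla_X + A - \lambda$, viewed as a continuous operator on the Fr\'echet space $\Gamma^\infty(U, \V)$.

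\emph{Case 1: $\lambda$ is not an eigenvalue of $\nabla_X + A$.} Then part (a) of Thm.~\ref{ThmFredholm} says that the homogeneous equation \eqref{HomogeneousEquation} has only the trivial solution and the inhomogeneous equation \eqref{InhomogeneousEquation} is solvable for every right-hand side; that is, $P$ is a bijection of $\Gamma^\infty(U, \V)$. Hence $\ker P = \{0\}$, $\im P = \Gamma^\infty(U, \V)$ is closed, and the cokernel vanishes, so $P$ is Fredholm of index $0$.

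\emph{Case 2: $\lambda$ is an eigenvalue.} By part (b) of Thm.~\ref{ThmFredholm}, $\dim \ker P = k < \infty$, the dual kernel $\ker((\nabla_X + A)^\prime - \lambda) \subset \Eh^\prime(U, \V^*)$ is likewise $k$-dimensional, and
\[
  \im P \;=\; \ker((\nabla_X + A)^\prime - \lambda)_\perp \;=\; \bigcap_{i=1}^k \ker T_i ,
\]
where $T_1, \dots, T_k$ is a basis of $\ker((\nabla_X + A)^\prime - \lambda)$. Since $\Eh^\prime(U, \V^*)$ is by definition the topological dual of $\Gamma^\infty(U, \V)$, each $T_i$ is a continuous functional, so each $\ker T_i$ is closed and therefore so is the finite intersection $\im P$. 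To identify the codimension, consider the map $\Phi : \Gamma^\infty(U, \V) \to \C^k$, $\Phi(v) := (T_1(v), \dots, T_k(v))$, whose kernel is exactly $\im P$; if $\Phi$ were not surjective, some nonzero tuple $(c_1, \dots, c_k)$ would annihilate its image, giving $\sum_i c_i T_i = 0$ in $\Eh^\prime(U, \V^*)$ and contradicting the linear independence of the $T_i$. Hence $\Phi$ is onto, $\operatorname{codim} \im P = k = \dim \ker P$, and $P$ is Fredholm of index $0$.

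I do not expect a genuine obstacle here. The only subtle point is that, in a Fr\'echet space, a subspace of finite codimension need not be closed, so the closedness of $\im P$ must be obtained from the explicit description of the range as an intersection of kernels of \emph{continuous} functionals rather than from the codimension count alone — which is precisely what the dual part of Thm.~\ref{ThmFredholm} supplies.
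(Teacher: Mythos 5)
Your proof is correct and takes essentially the same route as the paper: the range is identified with $\ker((\nabla_X + A)^\prime - \lambda)_\perp$, hence is a finite intersection of kernels of continuous functionals (compactly supported distributions) and therefore closed. You merely spell out in addition the surjectivity argument giving $\operatorname{codim}\operatorname{im} = \dim\ker = k$, which the paper leaves implicit in Thm.~\ref{ThmFredholm}.
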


\begin{proof}
We have
\begin{equation*}
  \mathrm{im}(\nabla_X +A -\lambda) = \ker ( \nabla_X + A - \lambda)^\prime_\perp.
\end{equation*}
Because $\ker ( \nabla_X + A - \lambda)^\prime$ is finite-dimensional, the image of $\nabla_X + A -\lambda$ is the intersection of the kernels of finitely many distributions, hence it is closed.
\end{proof}

\begin{corollary} \label{ThmFredholmDual}
Under the assumptions of Thm.\ \ref{ThmFredholm}, if $\lambda \in \C$ admits a $k$-dimensional space of eigenfunctions, then the inhomogeneous equation
\begin{equation*}
  (\nabla_X + A)^\prime T = \lambda T + S
\end{equation*}
has a solution if and only if $S \in \ker(\nabla_X + A - \lambda)^\perp$, and in this case, the space of solutions is an affine space of dimension $k$.
\end{corollary}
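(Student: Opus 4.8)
The plan is to recognise the operator $(\nabla_X + A)^\prime - \lambda$ on $\Eh^\prime(U, \V^*)$ as the transpose of the bounded operator $B := \nabla_X + A - \lambda$ acting on $\Gamma^\infty(U, \V)$, and then to extract both the solvability criterion and the dimension statement from the closed-range theorem, fed by the structural facts already established (Cor.\ \ref{CorollaryClosed} and the identifications behind Thm.\ \ref{ThmFredholm}). Under the duality $\Gamma^\infty(U, \V)^\prime = \Eh^\prime(U, \V^*)$ fixed in Section \ref{section1_2}, the operator $(\nabla_X + A)^\prime$ is by definition the transpose of $\nabla_X + A$ (its explicit shape being recorded in Remark \ref{RemarkDualOperator}), so $(\nabla_X + A)^\prime - \lambda = B^\prime$, and the inhomogeneous dual equation $(\nabla_X + A)^\prime T = \lambda T + S$ is precisely $B^\prime T = S$. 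Thus solvability for a given $S$ means $S \in \im B^\prime$, and whenever a solution $T_0$ exists the full solution set is the coset $T_0 + \ker B^\prime = T_0 + \ker((\nabla_X + A)^\prime - \lambda)$.

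For the solvability criterion, one inclusion I would dispatch by hand: if $B^\prime T = S$ then for every $u \in \ker B = \ker(\nabla_X + A - \lambda)$ one has $S(u) = (B^\prime T)(u) = T(Bu) = 0$, so $S \in \ker(\nabla_X + A - \lambda)^\perp$. For the converse I would invoke the closed-range theorem for Fr\'echet spaces: $\Gamma^\infty(U,\V)$ is Fr\'echet and, by Cor.\ \ref{CorollaryClosed}, $\im B$ is closed, so the theorem yields the equality of linear subspaces $\im B^\prime = (\ker B)^\perp$, which is exactly $\ker(\nabla_X + A - \lambda)^\perp$. This gives the asserted ``solvable if and only if $S \in \ker(\nabla_X + A - \lambda)^\perp$''.

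It then remains to compute $\dim \ker B^\prime$. Since $\im B$ is closed (Cor.\ \ref{CorollaryClosed}), $\ker B^\prime = (\im B)^\perp$ is canonically the dual of the quotient $\Gamma^\infty(U,\V)/\im B$; and because $B$ is Fredholm of index $0$, this quotient has dimension $\dim \ker B = k$, so $\dim \ker B^\prime = k$ as well. (Alternatively, $\dim \ker B^\prime = k$ is already contained in the proofs of Thm.\ \ref{ThmFredholm} and Thm.\ \ref{ThmCorrespondence2}, where $\ker((\nabla_X + A)^\prime - \lambda)$ was identified with the left $\lambda$-eigenspace of the finite matrix $[\DD_X + A]_N$, whose dimension equals that of its right eigenspace.) Combined with the first paragraph, the solution set of $(\nabla_X + A)^\prime T = \lambda T + S$, when non-empty, is an affine subspace of $\Eh^\prime(U,\V^*)$ of dimension $k$ with direction $\ker((\nabla_X + A)^\prime - \lambda)$, which is exactly the claim.

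The one point needing care is the appeal to the closed-range theorem outside the Banach setting: I would make explicit that its Dieudonn\'e--Schwartz form applies to the Fr\'echet pair at hand and directly delivers the set equality $\im B^\prime = (\ker B)^\perp$ in $\Eh^\prime(U,\V^*)$ — no reflexivity or weak-$*$ subtleties enter, since all that is used is this equality of linear subspaces. Everything else is formal bookkeeping with transposes, and in the end the corollary is just the $\Eh^\prime$-side mirror of Thm.\ \ref{ThmFredholm} together with Cor.\ \ref{CorollaryClosed}.
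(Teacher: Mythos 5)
Your proof is correct and rests on the same foundation as the paper's: Cor.\ \ref{CorollaryClosed} (closedness of $\im(L_\lambda)$ for $L_\lambda = \nabla_X + A - \lambda$) together with the duality between $\Gamma^\infty(U,\V)$ and $\Eh^\prime(U,\V^*)$. The only real difference is one of packaging: where you cite the Dieudonn\'e--Schwartz closed-range theorem to obtain $\im(L_\lambda^\prime) = \ker(L_\lambda)^\perp$, the paper proves exactly that implication by hand --- it factors $S$ through $W = \Gamma^\infty(U,\V)/\ker(L_\lambda)$, inverts $\overline{L}_\lambda\colon W \to \im(L_\lambda)$ continuously via the bounded inverse theorem, and then extends the resulting functional $\overline{T}$ from the closed subspace $\im(L_\lambda)$ of codimension $k$, the ambiguity in the extension giving the $k$-dimensional affine solution space. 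Both routes are valid; yours is shorter but leans on a stronger citation, while the paper's is self-contained modulo the open mapping theorem for Fr\'echet spaces, and your fallback identification of $\dim\ker(L_\lambda^\prime)=k$ via Thm.\ \ref{ThmCorrespondence2} is precisely what the proof of Thm.\ \ref{ThmFredholm} already records.
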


\begin{proof}
  Set $L_\lambda := \nabla_X + A - \lambda$. By Cor.\ \ref{CorollaryClosed} before, $\im(L_\lambda)$ is closed for every $\lambda \in \C$, hence $\im(L_\lambda)$ is a Fréchet subspace. By Thm.\ \ref{ThmFredholm}, $\ker(L_\lambda)$ is finite-dimensional, hence closed and $W := \Gamma^\infty(U, \V) / \ker(L_\lambda)$ is Fréchet as well. For $S \in \ker(L_\lambda)^\perp$, the functional $\overline{S} \in W^\prime$ is well-defined by
  \begin{equation*}
    \overline{S}(u + \ker(L_\lambda)) := S(u)
  \end{equation*}
  because when $v \in u + \ker(L_\lambda)$, then $u-v \in \ker(L_\lambda)$, hence $S(u) = S(v)$.
  Now $\overline{L}_\lambda: W \longrightarrow \im(L_\lambda)$ defined by
  \begin{equation*}
    \overline{L}_\lambda(u + \ker(L_\lambda)) = L_\lambda(u)
  \end{equation*}
  is bijective. On the other hand, $L_\lambda$ is a differential operator and therefore continuous on $\Gamma^\infty(U, \V)$ and so is $\overline{L}_\lambda$. By the bounded inverse theorem, $\overline{L}_\lambda^{-1}$ is continuous as well. Therefore $T$ defined by
  \begin{equation*}
    \overline{T}(u) := \overline{S}\bigl(\overline{L}_\lambda^{-1}(u)\bigr),
  \end{equation*}
  is a well-defined continuous functional on $\im(L_\lambda)$. Because $\im(L_\lambda)$ has codimension $k$, the affine space of functionals $T \in \Eh^\prime(U, \V^*)$ such that $T|_{\im(L_\lambda)} \equiv \overline{T}$ has dimension $k$ as well.
\end{proof}


\section{Smoothness of the Solution Map} \label{sectionsmooth}

In this section is to show that the solution $u \in \Gamma^\infty(U, \V)$ to the equation
\begin{equation*}
 (\nabla_X + A)u = v
\end{equation*}
depends smoothly on the coefficients, i.e.\ the vector field $X$, the endomorphism field $A$, the connection $\nabla$ and the right side $v$, meaning that the map sending the data $(\nabla, X, A, v)$ to the solution $u$ is a smooth map between the involved Fréchet spaces.

Again, it it suffices to consider the problem for $U \subseteq \R^n$, with a trivial bundle and trivial connection; the Christoffel symbols of the connection can be absorbed into the endomorphism $A$ as explained in section \ref{section2}. We consider the operator
\begin{align*}
\mathbf{T}: \X_U \times \Cinf&(U, \End(V)) \times \Cinf(U, V) \longrightarrow \Cinf(U, V)\\
&(X, A, u) \longmapsto (\DD_X + A)u,
\end{align*}
where $\X_U$ was defined in \ref{NotationXU}.
It is clear that in the interior of its domain, $\mathbf{T}$ is smooth with respect to the Fréchet topology. Denote by $\mathscr{U} \subseteq \mathrm{int}(\X_U) \times \Gamma^\infty(U, \End(\V))$ the set of data $(X, A)$ such that the operator $\nabla_X + A$ has a trivial kernel. By Thm.\ \ref{ThmEigenvalues}, this is equivalent to a condition on the eigenvalues of $A(p)$ and $\DD X|_p$ (where $p$ is the strictly positive source of $X$), namely the they cannot be combined in the form \eqref{TheEigenvalue} to give zero. This shows that the set $\mathscr{U}$ is open, because eigenvalues depend continuously on the operator (see \cite[Thm.\ II.5.14]{kato95}).

For $(X, A) \in \mathscr{U}$, we know by Thm.\ \ref{ThmFredholm} that for each $v \in \Cinf(U, V)$, there is a unique $u \in \Cinf(U, V)$ such that
\begin{equation} \label{TheEquation17}
  \mathbf{T}(X, A, u) = v.
\end{equation} 
Therefore we have a well-defined solution operator 
\begin{equation*}
  \mathbf{S}: \mathscr{U} \times \Cinf(U, V) \longrightarrow \Cinf(U, V)
\end{equation*}
that maps the data $(X, A, v)$ to the solution $u$ of \eqref{TheEquation17}.

\begin{theorem} \label{SmoothDependence}
The solution operator $\mathbf{S}$ is smooth with respect to the Fréchet topology.
\end{theorem}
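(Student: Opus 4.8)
The plan is to obtain smoothness of $\mathbf{S}$ from an implicit-function-theorem-style argument combined with the explicit integral formula \eqref{TheSolution} for flat solutions and the finite-dimensional reduction of Thm.\ \ref{ThmCorrespondence}. First I would reduce to the trivial-bundle setting as in Section \ref{section2} (so that the Christoffel symbols are absorbed into $A$), and fix a point $(X_0, A_0, v_0) \in \mathscr{U} \times \Cinf(U, V)$. Since $(X_0, A_0) \in \mathscr{U}$, the operator $\DD_{X_0} + A_0$ has trivial kernel, so by Thm.\ \ref{ThmEigenvalues} the number $0$ cannot be written in the form \eqref{TheEigenvalue} using the eigenvalues of $A_0(p_0)$ and $\DD X_0|_{p_0}$; as eigenvalues depend continuously on the operator, the same holds in a whole neighborhood, and one may even pick a single $N$ large enough that works uniformly there. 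The strategy is then: split $u = \tilde u + w$, where $\tilde u$ is a polynomial of degree $\le N$ solving the projected equation \eqref{TheEquationPr}, and $w$ is the flat correction given by \eqref{TheSolution}; show each piece depends smoothly on the data.

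The polynomial part is the easy half: by Lemma \ref{LemmaEigenvalues} the matrix $[\DD_X + A]_N$ on $\mathcal{P}_N$ has entries that are polynomial (hence smooth) functions of the finitely many Taylor coefficients of $X$ and $A$ at $p$, and $p = p(X)$ itself depends smoothly on $X$ in $\mathrm{int}(\X_U)$ (implicit function theorem applied to $X(p) = 0$, whose linearization $\DD X|_p$ is invertible). Since $0$ is not an eigenvalue, $[\DD_X+A]_N - \lambda$ (with $\lambda = 0$ here, or more generally $\lambda$ fixed) is invertible with inverse depending smoothly (rationally) on those coefficients, so $\boldsymbol{u} = ([\DD_X + A]_N)^{-1}[v]_N$ and any chosen smooth representative $\tilde u$ depend smoothly on $(X, A, v)$. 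The residual $r := (\DD_X + A)\tilde u - v$ is then flat at $p$ and depends smoothly on the data, and crucially, by the uniformity in the estimates of Lemma \ref{FlatSolutionsEstimate} together with Thm.\ \ref{ThmODESolutionEstimate} and Cor.\ \ref{FlowEstimatesII}, the flatness is "uniform": for each $S$ and compact $K$, $|D^\alpha r(\Phi_t^X(y))| \le C e^{St}$ with $C$ bounded on a neighborhood of $(X_0, A_0, v_0)$.

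The flat part is where the work lies. I would write $w(y) = \int_{-\infty}^0 E(s,y)^{-1} r(\Phi_s^X(y))\,\dd s$ and show this defines a smooth map into $\Cinf(U,V)$ as a function of $(X, A, r)$ jointly. The point is that differentiating the integrand in $y$ and in the parameters $(X, A)$ produces integrands of the same shape — products of derivatives of $E^{-1}$, of the flow $\Phi^X$, and of $r$ — each of which is controlled: the $E^{-1}$ and $\DD\Phi$ factors grow at most like $e^{\nu s}$ (uniformly in a parameter ball) by Thm.\ \ref{ThmODESolutionEstimate} and Cor.\ \ref{FlowEstimatesII}, while the $r$ factor decays like $e^{Ss}$ with $S + \nu > 0$ by the uniform flatness above, so all the parametrized integrals converge absolutely and uniformly on compact sets, together with all their derivatives. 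This lets one differentiate under the integral sign arbitrarily often and conclude, via the standard criterion for smoothness of parameter integrals into Fr\'echet spaces of sections, that $(X, A, r) \mapsto w$ is smooth; composing with the smooth map $(X, A, v) \mapsto (X, A, r)$ and adding $\tilde u$ gives smoothness of $\mathbf{S}$. The main obstacle I anticipate is purely bookkeeping: organizing the chain- and Leibniz-rule expansion of $D^\alpha_{y}\,\partial^\beta_{(X,A)}$ applied to $E_{X,A}(s,y)^{-1} r(\Phi_s^X(y))$ into finitely many terms each of which is a product admitting the exponential estimate — once this combinatorial structure is set up, convergence is immediate and the smoothness criterion applies mechanically. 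A secondary point to handle carefully is the dependence of the source $p$ and of a chosen "sphere" $S$ on $X$, so that Lemma \ref{LemmaSufficient} can be invoked to transfer the result to an arbitrary star-shaped $U$, but this too is a routine application of the implicit function theorem for ODE flows.
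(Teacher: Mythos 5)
Your overall architecture matches the paper's: split off a finite-dimensional polynomial piece (handled by linear algebra on $\mathcal{P}_N$, with smooth dependence of $p(X)$ and of the matrix entries on the data) and treat the remaining high-order piece by the integral formula \eqref{TheSolution2}, using Thm.\ \ref{ThmODESolutionEstimate} and Cor.\ \ref{FlowEstimatesII} for convergence. Two points, however, deserve attention. First, your residual $r = (\DD_X + A)\tilde u - v$ is \emph{not} flat at $p$: since $\tilde u$ only solves the projected equation \eqref{TheEquationPr}, you only get $r \in \m^{N+1}$, i.e.\ vanishing to order $N$, so the estimate "$|D^\alpha r(\Phi^X_t(y))| \le C e^{St}$ for every $S$" is false. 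What you actually have, by Lemma \ref{FlatSolutionsEstimate}, is $|D^\alpha r(\Phi^X_t(y))| \le C e^{(N-|\alpha|)\nu t}|y|^{N-|\alpha|}$, and since $\mathrm{Re}\,\mathrm{spec}\,A(p)$ may only be bounded below by a possibly negative $\mu$, the integrals converge only if $\mu + (N-m)\nu > 0$; so $N$ must be chosen large depending on $m$ and $\mu$, exactly as in the paper's argument for $\hat{\mathbf{S}}$ on $\m^{N+1}$. This is repairable but must be said.

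Second, and more substantively, your route to \emph{smoothness} (as opposed to continuity) of the flat part diverges from the paper's and is underestimated as "bookkeeping." Differentiating the integrand with respect to the parameters $(X,A)$ produces the variational derivatives $\delta E_{X,A}$ and $\delta \Phi^X_t$, which satisfy inhomogeneous linear ODEs; their solutions are controlled only by Duhamel-type bounds of the form $C|t|^k e^{\nu t}$, and these uniform-in-$t$ estimates are \emph{not} contained in Thm.\ \ref{ThmODESolutionEstimate} (which bounds $E_{X',A'}$ for nearby data, not its derivatives in the data). You would have to prove them separately for derivatives of all orders. The paper avoids this entirely by exploiting linearity in $v$: it only verifies that $\mathbf{S}_N \to \mathbf{S}$ locally uniformly, hence that $\mathbf{S}$ is a \emph{continuous} family of continuous linear maps inverting the smooth family $\mathbf{T}$, and then invokes \cite[Thm.\ I.5.3.1]{hamilton}, which upgrades continuity of such an inverse family to smoothness for free. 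Your approach can be made to work, but without either this device or the missing estimates on the parameter-derivatives of $E_{X,A}$ and $\Phi^X$, the smoothness step is a genuine gap rather than a mechanical application of a criterion.
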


We first prove a slightly weaker theorem and show subsequently how to adapt the proof in the more general situation. Let $\mathscr{U}^\prime \subset \mathscr{U}$ be the subset of data such that if $p$ is the positive definite source of $X$, we have
\begin{equation} \label{Apositive}
  0 < \min \Re\, \mathrm{spec} \,A(p).
\end{equation}

\begin{lemma} \label{LemmaSmoothDependence}
The solution operator $\mathbf{S}$ is smooth when restricted to the subset $\mathscr{U}^\prime$.
\end{lemma}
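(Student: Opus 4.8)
The plan is to exhibit an explicit formula for the solution operator $\mathbf{S}$ on $\mathscr{U}^\prime$ as a convergent integral and then verify that this formula defines a smooth map between the relevant Fréchet spaces. Recall from Section \ref{section2} that, under the positivity assumption \eqref{Apositive}, the formula
\begin{equation*}
  u(y) = \mathbf{S}(X, A, v)(y) = \int_{-\infty}^0 E_{X,A}(s, y)^{-1} \, v(\Phi_s^X(y)) \, \dd s
\end{equation*}
gives a solution of $(\DD_X + A)u = v$, where $E_{X,A}$ solves the initial value problem with $-A(\Phi_t^X(y))$ on the right-hand side and $\Phi^X$ is the flow of $X$. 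When $0 < \min\Re\,\mathrm{spec}\,A(p)$, Thm.\ \ref{ThmEigenvalues} guarantees that $0$ is not an eigenvalue (a sum $\alpha_1\mu_1 + \dots + \alpha_n\mu_n + \rho$ of numbers with positive real part cannot vanish), so $(X,A) \in \mathscr{U}^\prime$ and this $u$ is \emph{the} solution; hence $\mathbf{S}$ genuinely equals this integral on $\mathscr{U}^\prime$.

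The core of the argument is to show this integral depends smoothly on $(X, A, v)$. First I would record that $(X, A, v) \mapsto E_{X,A}$, $(X,y,t)\mapsto \Phi_t^X(y)$ and the composition/multiplication operations are all smooth maps between the appropriate Fréchet spaces on the interior of their domains — this is the elementary smooth-dependence statement already quoted after \eqref{InitialValueProblem23} and in Notation \ref{NotationXU}. The only genuine issue is the improper integral over $(-\infty, 0]$ and whether smoothness survives passage to the limit $N \to \infty$ in the partial integrals $u_N(y) = \int_{-N}^0 E_{X,A}(s,y)^{-1} v(\Phi_s^X(y))\,\dd s$. Each $u_N$ is a smooth function of $(X, A, v)$, so it suffices to show that on a neighborhood of any $(X_0, A_0, v_0) \in \mathscr{U}^\prime$, the sequence $u_N$ — together with all of its derivatives with respect to the parameters $(X, A, v)$ — converges uniformly in the Fréchet topology. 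For this I would combine Thm.\ \ref{ThmODESolutionEstimate} (applied to $E^{-1}$, which solves a problem with $A$ on the right, so its $C^m(K)$-norm on a $C^m(K)$-ball around $(X_0,A_0)$ is bounded by $Ce^{t\nu}$ for some $\nu < 0$ since $\min\Re\,\mathrm{spec}\,A_0(p) > 0$) with Cor.\ \ref{FlowEstimatesII} (controlling $\|\DD\Phi^X_t\|_{C^m(K)}$ by $Ce^{\nu' t}$, again with $\nu' < 0$), so that the integrand and all its derivatives in $y$ and in the parameters decay like $e^{\varepsilon t}$ for some $\varepsilon > 0$, uniformly over a small ball of data. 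Differentiating the integrand with respect to $X$, $A$, $v$ produces more factors of $E^{-1}$, $\DD\Phi^X_t$ and derivatives thereof — all still exponentially decaying by the same two theorems — and differentiating $v(\Phi_s^X(y))$ brings in powers of $\DD\Phi_s^X$; the exponential decay dominates any polynomial growth in $s$, so the tail integrals $\int_{-N}^{-M}$ of every such derivative are bounded by $C(e^{\varepsilon N} - e^{\varepsilon M})^{-1}$-type expressions, uniformly on the ball. Hence $u_N \to u$ in $C^\infty$ together with all parameter-derivatives, and a standard argument (uniform limit of smooth maps whose derivatives also converge uniformly is smooth — valid in Fréchet spaces) shows $\mathbf{S}$ is smooth on $\mathscr{U}^\prime$.

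The main obstacle I anticipate is purely bookkeeping rather than conceptual: organizing the higher mixed derivatives of the integrand $E_{X,A}(s,y)^{-1} v(\Phi_s^X(y))$ with respect to all of $y$, $X$, $A$, $v$ simultaneously, and checking that \emph{each} resulting term is a product of factors each controlled by Thm.\ \ref{ThmODESolutionEstimate} or Cor.\ \ref{FlowEstimatesII} (times at most polynomial-in-$s$ growth coming from Faà di Bruno combinatorics when differentiating through the flow), so that the whole term still decays exponentially as $s \to -\infty$, uniformly on a small ball of parameters. Once one accepts that this pattern persists — which it does, since differentiating any of the building blocks only ever reproduces building blocks of the same exponentially-decaying type — the uniform convergence of $u_N$ and all its derivatives is immediate, and the lemma follows. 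I would state the derivative estimates in the form ``for every $m$, every compact $K$, every multi-index in the parameters, there is a ball of data and constants $C, \varepsilon > 0$ with $\varepsilon > 0$ such that the corresponding derivative of the integrand is $\leq C e^{\varepsilon t}(1+|t|)^k$ on $K$'' and leave the explicit constants, which follow mechanically from the two cited results, to the reader.
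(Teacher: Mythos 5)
Your formula for $\mathbf{S}$ and your treatment of the tail of the improper integral (partial integrals $u_N$, exponential decay of $E_{X,A}^{-1}$ and $\DD\Phi_t^X$ from Thm.\ \ref{ThmODESolutionEstimate} and Cor.\ \ref{FlowEstimatesII}, uniform over a $C^m(K)$-ball of data) coincide with the paper's argument up to and including the proof that $\mathbf{S}_N \to \mathbf{S}$ locally uniformly, i.e.\ that $\mathbf{S}$ is \emph{continuous}. Where you diverge is the passage from continuity to smoothness. The paper stops estimating at that point and invokes a general functional-analytic fact (Hamilton, Thm.\ I.5.3.1): a smooth family of continuous linear isomorphisms between Fr\'echet spaces whose inverse family is continuous automatically has a \emph{smooth} inverse family. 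Since $\mathbf{T}(X,A,\cdot)$ is manifestly smooth in $(X,A)$ and linear in $u$, and $\mathbf{S}$ is its continuous inverse, smoothness is free. You instead propose to differentiate the integral formula directly with respect to all of $X$, $A$, $v$ and show that every parameter-derivative of $u_N$ converges uniformly. That route can be made to work, but it buys nothing over the paper's and costs considerably more: the two estimates you cite control only $y$-derivatives of $E_{X,A}^{-1}$ and $\Phi_t^X$, \emph{uniformly over} a ball of parameters — they say nothing about derivatives \emph{with respect to} $X$ and $A$. Those derivatives satisfy inhomogeneous (variational/Duhamel) versions of the defining ODEs and must be estimated separately; one finds bounds of the form $C(1+|t|)^k e^{\nu t}$, so the polynomial factors you attribute to Fa\`a di Bruno combinatorics in fact already arise at the level of first parameter-derivatives and require new lemmas not contained in \ref{ThmODESolutionEstimate} or \ref{FlowEstimatesII}. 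So your sentence claiming the parameter-derivatives are ``all still exponentially decaying by the same two theorems'' is the one soft spot: it is true, but not by those theorems. If you want to avoid proving these additional estimates, adopt the paper's shortcut — prove continuity only (which you have done) and then cite the general result on inverses of smooth families of linear maps.
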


\begin{proof}
Let again $E_{X, A}$ be the unique matrix solution of the problem
  \begin{equation} \label{InitProblem77}
    \frac{\partial}{\partial t}E_{X, A}(t, y) = - A(\Phi_t(y))E_{X, A}(t, y), ~~~~~~~ E(0, y) = \id
  \end{equation}
We claim that $\mathbf{S}$ is given by
  \begin{equation} \label{TheSolution2}
    \mathbf{S}(X, A)v = \int_{-\infty}^0 E_{X, A}(t, y)^{-1} v(\Phi_t^X(y)) \dd t,
  \end{equation}
which can be verified formally as in Section \ref{section2}. However, we need to check that this actually defines a function in $\Cinf(U, V)$ when $v$ is not flat. Similar as in Section \ref{section2}, we define for $N \in \N$
  \begin{equation*}
    \mathbf{S}_N(X, A)v = \int_{-N}^0 E_{X, A}(t, y)^{-1} v(\Phi_t^X(y)) \dd t,
  \end{equation*}
which is smooth as both $E$ and $\Phi$ depend smoothly on all data. Now we use that for $(X, A) \in \mathscr{U}^\prime$ with strictly positive source $p \in U$, the spectra of both $A(p)$ and $\DD X|_{p}$ are bounded from below by a constant $\nu >0$ (and the same is true for $(X^\prime, A^\prime)$ in some neighborhood of $(X, A)$ in $\mathscr{U}^\prime$). By \ref{ThmODESolutionEstimate} and \ref{FlowEstimatesII}, we get that for each compact subset $K$ of $U$, there exists a constant $C>0$ such that
\begin{equation} \label{EstimatesOnEandPhi}
  \|E_{X^\prime, A^\prime}(t)\|_{C^m(K)} \leq C e^{\nu t} ~~~~ \text{and}~~~~ \|\DD \Phi^{X^\prime}_t\|_{C^m(K)} \leq C e^{\nu t}
\end{equation}
for all $(X^\prime, A^\prime)$ in some $C^m(K)$-ball around $(X, A)$. 

The derivatives of the integrand are given by
\begin{equation*}
  \sum_{\beta \leq \alpha} \binom{\alpha}{\beta} \DD^{\alpha-\beta}[E_{X, A}(t, y)^{-1}] \DD^\beta [ v(\Phi_t^X(y))].
\end{equation*}
Using the estimates \eqref{EstimatesOnEandPhi} above, we get that
\begin{equation*}
  \| \mathbf{S}_N(X^\prime, A^\prime)v - \mathbf{S}_M(X^\prime, A^\prime)v\|_{C^m(K)} \leq C^\prime \|v\|_{C^m(K)} |e^{-\nu N}- e^{-\nu M}|.
\end{equation*}
for all $X^\prime$, $A^\prime$ in some $C^m$-neighborhood of $(X, A)$ and all $v \in \Cinf(U, V)$ (where $C^\prime$ is some new constant independent of $N$ and $M$). This shows that $\mathbf{S}_N$ converges locally uniformly to $\mathbf{S}$, hence $\mathbf{S}$ is continuous.

The smoothness follows now from the following general result: Let $\mathbf{T}: (\mathscr{U} \subseteq \mathcal{F}) \times \mathcal{G} \longrightarrow \mathcal{H}$ be a smooth family of continuous linear maps between Fréchet spaces (i.e.\ $T(f): \mathcal{G} \longrightarrow \mathcal{H}$ is continuous and linear for each $f \in \mathscr{U}$) with continuous inverse $\mathbf{S}: \mathscr{U} \times \mathcal{H} \longrightarrow \mathcal{G}$, then $\mathbf{S}$ is smooth (see \cite[Thm.\ I.5.3.1]{hamilton}).
\end{proof}

\begin{corollary}
For $(X, A) \in \mathscr{U}^\prime$ and $v \in \Gamma^\infty(U, \V)$, the unique solution $u$ to the equation $(\DD_X + A)u = v$ is given by the integral formula
\begin{equation} \label{SolutionFormula}
  u(y) =  \int_{-\infty}^0 E_{X, A}(t, y)^{-1} v(\Phi_t^X(y)) \dd t,
\end{equation}
where $E_{X, A}$ is the solution to \eqref{InitProblem77} and $\Phi_t^X$ is the flow of $X$.
\end{corollary}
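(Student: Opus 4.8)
The plan is to deduce this corollary directly from Lemma \ref{LemmaSmoothDependence} and the definition of the solution operator $\mathbf{S}$. First I would note that for $(X, A) \in \mathscr{U}^\prime$, Theorem \ref{ThmFredholm} guarantees that the equation $(\DD_X + A)u = v$ has a \emph{unique} solution $u \in \Cinf(U, V)$ for each $v$; this is exactly the object $\mathbf{S}(X, A)v$. So it suffices to show that the integral on the right-hand side of \eqref{SolutionFormula} converges, defines a smooth function of $y$, and satisfies the equation — all of which were established in the proof of Lemma \ref{LemmaSmoothDependence}.

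Concretely I would proceed in three short steps. First, convergence and smoothness: in the proof of Lemma \ref{LemmaSmoothDependence} it was shown, using the estimates \eqref{EstimatesOnEandPhi} coming from Thm.\ \ref{ThmODESolutionEstimate} and Cor.\ \ref{FlowEstimatesII}, that the truncated integrals $\mathbf{S}_N(X,A)v$ form a Cauchy sequence in every $C^m(K)$ norm, hence converge in $\Cinf(U,V)$ to the function defined by the improper integral \eqref{TheSolution2}. Second, the equation: the formal computation carried out in Section \ref{section2} — differentiating $u(\Phi_t(y))$ in $t$ and using the cocycle identity \eqref{SolutionIdentities} for $E_{X,A}$ — shows that this function satisfies $(\DD_X + A)u = v$; the computation is now legitimate because we have just verified absolute convergence. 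Third, uniqueness: since $(X,A)\in\mathscr U' \subseteq \mathscr U$, the operator has trivial kernel, so by Thm.\ \ref{ThmFredholm} the solution produced by \eqref{SolutionFormula} is the only one, and therefore equals $\mathbf{S}(X,A)v$.

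There is essentially no obstacle here: the corollary is a restatement of what the proof of Lemma \ref{LemmaSmoothDependence} already produces, namely the explicit integral formula for $\mathbf{S}$, specialized to a single pair $(X, A)$ and a single right side $v$ rather than viewed as a map between Fréchet spaces. The only point requiring a word of care is that the hypothesis \eqref{Apositive} (together with the source condition on $X$) is what makes the exponential weight $e^{\nu t}$ in \eqref{EstimatesOnEandPhi} have $\nu > 0$, so that the integral over $t \in (-\infty, 0]$ actually converges; this is why the statement is phrased for $(X,A) \in \mathscr{U}^\prime$ rather than all of $\mathscr{U}$.

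\begin{proof}
This is contained in the proof of Lemma \ref{LemmaSmoothDependence}. Indeed, for $(X,A) \in \mathscr{U}^\prime$, the spectra of $A(p)$ and $\DD X|_p$ are bounded below by some $\nu > 0$, so the estimates \eqref{EstimatesOnEandPhi} hold on every compact $K \subseteq U$; as shown there, the truncated integrals $\mathbf{S}_N(X,A)v$ converge in every $C^m(K)$ norm, so the improper integral on the right-hand side of \eqref{SolutionFormula} defines a function in $\Cinf(U, V)$. The formal computation of Section \ref{section2} — differentiating $u(\Phi_t(y))$ in $t$ and using \eqref{SolutionIdentities} — is then justified and shows that this function solves $(\DD_X + A)u = v$. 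Since $(X,A) \in \mathscr{U}^\prime \subseteq \mathscr{U}$, the operator $\DD_X + A$ has trivial kernel, so by Thm.\ \ref{ThmFredholm} this is the unique solution, i.e.\ it equals $\mathbf{S}(X,A)v$.
\end{proof}
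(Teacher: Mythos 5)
Your proof is correct and follows exactly the route the paper intends: the corollary is stated without separate proof precisely because it is the content of the proof of Lemma \ref{LemmaSmoothDependence} (convergence of the truncated integrals via the estimates \eqref{EstimatesOnEandPhi}, the formal verification from Section \ref{section2}, and uniqueness from the trivial kernel via Thm.\ \ref{ThmFredholm}). You have assembled the same ingredients in the same way.
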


\begin{remark}
From the formula \eqref{TheSolution2} and the estimates from \ref{ThmODESolutionEstimate} and \ref{FlowEstimatesII}, it follows that for each compact subset $K$ of $U^\prime$ and each $(X, A) \in \mathscr{U}^\prime$, there exists a neighborhood $\mathscr{V} \subseteq \mathscr{U}^\prime \times C^\infty(U, V)$ such that
\begin{equation*}
 \| \mathbf{S}(X^\prime, A^\prime)v^\prime \|_{C^m(K)} < C \bigl(1 + \|v^\prime\|_{C^m(K)} + \|X^\prime\|_{C^m(K)} + \|A^\prime\|_{C^m(K)}\bigr)
\end{equation*}
for all $(X^\prime, A^\prime, v^\prime) \in \mathscr{V}$,
showing that $\mathbf{S}$ is in fact a {\em tame} map when interpreted as acting between the tame Fréchet spaces $C^\infty(K, \R^n)$, $C^\infty(K, \End(V))$, $\Cinf(K, V)$. This is a result analogous result to the case of the solution map to a linear elliptic operator (compare \citep[Thm.\ II.3.3.1]{hamilton}). 
\end{remark}

\begin{proof}[of Thm.\ \ref{SmoothDependence}]
Now assume that for $(X, A) \in \mathscr{U}$, we only have the weaker estimate
\begin{equation*}
  \mu < \min \Re\, \mathrm{spec} \,A(p),
\end{equation*}
where $\mu$ is possibly negative. However, we still know that
\begin{equation*}
  0 < \nu < \min \Re\, \mathrm{spec} \,\DD X|_p,
\end{equation*}
as $X$ has a strictly positive source at $p$. As argued before, both these estimates hold in a whole neighborhood of $(X, A) \in \mathscr{U}$.

The choice of coordinates induces a splitting
\begin{equation*}
  C^\infty(U, V) = \mathcal{P}_N \oplus \m^{N+1}.
\end{equation*}
With respect to this splitting, the operators $\mathbf{T}$ and $\mathbf{S}$ can be represented in matrix form by
\begin{equation*}
\mathbf{T} = 
\begin{pmatrix}
T_0 & 0\\
T_1 & \hat{\mathbf{T}}
\end{pmatrix}
 ~~~~~~~~\text{and}~~~~~~~~
 \mathbf{S} = 
\begin{pmatrix}
S_0 & 0\\
S_1 & \hat{\mathbf{S}}
\end{pmatrix}.
\end{equation*}
Clearly $S_0 = T_0^{-1}$, so $S_0$ depends smoothly on all data as $\mathcal{P}_N$ is finite-dimensional. We now claim that $\hat{\mathbf{S}}=\hat{\mathbf{S}}(X^\prime, A^\prime)$ is given by the same formula \eqref{SolutionFormula} as before. Formally, this is true for the same reason as before, but we need to show that $\hat{\mathbf{S}}(X^\prime, A^\prime)$ is actually well-defined on $\m^{N+1}$ and that it preserves this space. From Lemma \ref{FlatSolutionsEstimate}, we know that a function $v$ is in $\m^{N+1}$ if and only if it fulfills
\begin{equation*}
  |v(\Phi_t^{X^\prime}(y))| \leq C e^{\nu N t} |y|^N
\end{equation*}
for each $t \leq 0$ and all $y$ in a compact set $K \subseteq U$. Hence
\begin{align*}
 |\hat{\mathbf{S}}(X^\prime, A^\prime)v(y)| 
 \leq \int_{-\infty}^0 |E_{X^\prime, A^\prime}(t, y)^{-1}| |v(\Phi_t^{X^\prime}(y))| \dd t 
 \leq \int_{-\infty}^0 e^{(\mu + N\nu)t}|y|^N \dd t.
\end{align*}
This clearly converges if we choose $N$ big enough to have $\mu + N\nu > 0$. Furthermore, by the remark before, this estimate shows that $\hat{\mathbf{S}}(X^\prime, A^\prime)v \in \m^{N+1}$, at least if we show that it is smooth as well. However, for $v \in \m^{N+1}$, the same estimates as in the proof of \ref{LemmaSmoothDependence} can be carried out to obtain this result. 

It remains to show that $S_1$ is bounded, but clearly
\begin{equation*}
  S_1 = - \hat{\mathbf{S}}T_2 S_0,
\end{equation*}
so the result follows.
\end{proof}

\begin{remark}
For the case of non-trivial kernels, one can obtain a similar but more complicated result by introducing finite-dimensional auxiliary spaces, which take care of the kernel and cokernel (compare \cite[Thm.\ II.3.3.3]{hamilton} for elliptic operators).
\end{remark}


\section{Applications and further Discussion} \label{section4}

Choose coordinates as in the proof of Lemma \ref{LemmaEigenvalues}. We observe from the proof that in such coordinates, natural candidates for eigenfunctions are the polynomials $z^\alpha \cdot b_j$, where $b_j$ is an eigenvector of $A(0)$ with eigenvalue $\rho_j$. In the case that we can furthermore choose (probably non-Euclidean) coordinates such that we have
\begin{equation*}
  \DD_X + A = \sum_{j=1}^n \mu_j z_j \frac{\partial}{\partial z_j} + A(0),
\end{equation*}
these functions are exactly the eigenfunctions. However, in general it is not possible to linearize the vector field $X$ without imposing additional conditions on the eigenvalues $\mu_1, \dots, \mu_n$. For example, a linearization is always possible if for all $j = 1, \dots n$ and all $\alpha \in \N_0^n$ with $|\alpha|>1$, we have
\begin{equation}
 \mu_j \neq \alpha_1 \mu_1 + \dots + \alpha_n \mu_n.
\end{equation}
This is the Poincaré-Sternberg Linearization Theorem \cite{sternberg57}. Clearly, this is an open condition on the initial data in the Fréchet topology. In this sense, the "generic" vector field $X$ will possess a linearization and the "generic" operator $\nabla_X + A$ will admit $m(\lambda)$ eigenvectors to the eigenvalue $\lambda$ (compare Remark \ref{RemarkOnMlambda}).

We now give an example where the vector field fails to possess a smooth linearization.

\begin{example} \label{ExampleLowRegularity}
Remember Example \ref{JordanNormalFormOnP2}, where we looked at $X = \grad \phi$ with $\phi(y) = \frac{1}{2} y_1^2+ y_1^2y_2 + y_2^2$. By calculating the matrix representation of $[\DD_X]_2$ on $\mathcal{P}_2$, we saw that the equation
\begin{equation} \label{ExampleEquation}
  \DD_X u = \lambda u
\end{equation}
has the eigenvalue $2$ with "arithmetic multiplicity" $m(2)=2$, but that there is only one function $u$ solving \eqref{ExampleEquation} with $\lambda = 2$. We conclude that there are no smooth coordinates $\tilde{y}$ such that $X$ is linear, i.e.\ $X = \tilde{y}_1\partial_{\tilde{y}_1} + 2 \tilde{y}_2 \partial_{\tilde{y}_2}$, because otherwise, $\tilde{y}_1^2$ and $\tilde{y}_2$ would be smooth eigenfunctions for the eigenvalue $2$. 

By the Hartmann-Grobmann Theorem however (see e.g.\ \citep[p.\ 127]{perko91}), there exist such diagonalizing coordinates $\tilde{y}$ of regularity $C^1$. Hence, there exists another eigenfunction which is only $C^1$. This shows that the solution theory of the equation \eqref{TheEquation0} becomes quite different when considering solutions of lower regularity. This is a huge difference to the theory of elliptic PDE, as all eigenfunctions are automatically smooth in the elliptic case.
\end{example}

Let us now turn to the examples mentioned in the introduction.

\begin{example}[Heat Kernel Expansion] \label{ApplicationHeat}
 Let $L$ be a generalized Laplace type operator, acting on sections of a vector bundle $\V$ (equipped with a scalar product) over a compact Riemannian manifold $M$. There exists a unique connection $\nabla$ on $\V$ such that $L = -\mathrm{tr} \nabla^2 + K$ for some endomorphism field $K$.
 It is well-known that the solution semigroup $e^{-tL}$ to the heat equation $(\partial_t + L)u = 0$ has a smooth integral kernel $k_t$, which has an asymptotic expansion for $t \searrow 0$ of the form
\begin{equation} \label{HeatKernelApproximation}
  k_t(p, q) \sim (4 \pi t)^{-n/2} \exp\Bigl({ -\frac{1}{4t} d(p, q)^2}\Bigr)\sum_{j=0}^\infty t^j \Phi_j(p, q).
\end{equation}
Here, $d(p, q)$ is the Riemannian distance between $p$ and $q$ (for details of all this, see \cite[Ch.\ 2]{berlinegetzlervergne}.

An important role is played here by the Jacobian determinant of the exponential map,
\begin{equation*}
  \mu(q) := \mu(p, q) := | \det ( \dd_X \exp_p ) | ~~~~~ \text{where}~~~~~ q = \exp_p(X).
\end{equation*}
Set $\Psi_j(p, q) = \mu(p, q)^{1/2}\, \Phi_j(p, q)$. Then for each $p \in M$, the $\Psi_j$ fulfill the transport equations \citep[Prop.\ 2.24]{berlinegetzlervergne}
\begin{align}
 \nabla_X \Psi_0(p, \cdot) &= 0 \label{TransportZero}\\
 (\nabla_X + j)\Psi_j(p, \cdot) &= - B \Psi_{j-1}(p, \cdot), ~~~~ j=1, 2, \dots \label{TransportHeat}
\end{align}
in the bundle $\V^*_p \otimes \V$, in a geodesic neighborhood of $p$. Here, $B := \mu^{1/2} \circ L \circ \mu^{-1/2}$ while
\begin{equation*}
  X = x^1 \frac{\partial}{\partial x^1} + \cdots +  x^n \frac{\partial}{\partial x^n}
\end{equation*}
is the radial vector field around $p$ ($x$ being a geodesic normal chart around $p$). Of course, all entities in the equation depend on $p$.

By Thm.\ \ref{ThmEigenvalues}, the eigenvalues to the $j$-th transport equation are
\begin{equation*}
  \lambda = \alpha_1 + \dots + \alpha_n + j, ~~~~~~\alpha \in \N_0^n.
\end{equation*}
In particular, $\lambda = 0$ is an eigenvalue with multiplicity $m(\lambda) =1$ iff $j=0$ and not an eigenvalue if $j>0$. Hence by Thm.\ \ref{ThmFredholm}, for each initial condition for $\Psi_0(p, \cdot)$ at $p$ (in particular, $\Psi_0(p, p) = \id$), there is exactly one solution $\Psi_0$ of the equation with $j=0$, and all $\Psi_j$ for $j>0$ are already uniquely determined and well-defined (usually, one chooses $\Psi_0(p, p) = \mathrm{id}_{\V}|_p$.
 
The map that sends $p$ to the radial vector field $X$ centered at $p$ is smooth as a map $U \longrightarrow \X_U$ for any geodesically convex subset $U$ of $M$, so by Thm.\ \ref{SmoothDependence} the $\Psi_j$ are smooth sections of the bundle $\V^* \boxtimes \V$ over the base
\begin{equation*}
  M \bowtie M := \{(p, q) \mid \text{$p$ and $q$ are not conjugate}\}.
\end{equation*}
Indeed, for every $(p, q) \in M \bowtie M$, one can find a geodesically convex open subset $U$ of $M$ containing both points and such that $U \times U \subset\subset M \bowtie M$. Then the radial vector field arount ${p^\prime}$ is in $\mathrm{int}(\X_U)$ for each $p^\prime \in U$. Furthermore, by \ref{SmoothDependence}, the functions $\Psi_j$ depend smoothly on the metric $g$ and the operator $L$, because both the radial vector fields and the function $\mu$ depends smoothly on $g$.

From \eqref{TransportZero}, it follows that $\Psi_0(p, q) = \Pi(p, q)$, the parallel transport along $\gamma_{p, q}$, the unique shortest geodesic connecting $p$ to $q$ (this is well-defined because $(p, q) \in M \bowtie M$). The higher terms are given by the formula
\begin{equation}
\Psi_j(p, q) = - \Pi(p, q)\int_0^1 s^{j-1} \,\Pi(p, \gamma_{p, q}(s))^{-1} B\Psi_{j-1}(p, \gamma_{p, q}(s))\, \dd s.
\end{equation}
In terms of $\Phi_j = \mu^{-1/2}\Psi_j$, this becomes
\begin{equation}
\Phi_j(p, q) = - \Phi_0(p, q)\int_0^1 s^{j-1} \,\Phi_0(p, \gamma_{p, q}(s))^{-1} L\Phi_{j-1}(p, \gamma_{p, q}(s))\, \dd s.
\end{equation}
This can easily be verified by calculating the components of the formula \eqref{SolutionFormula} in this particular case and making the substitution $t \mapsto s := \ln(t)$ (compare \cite[Thm.\ 2.26]{berlinegetzlervergne}).

\medskip

The Hadamard coefficients to the fundamental solution to the wave equation $\Box u = 0$ on a Lorentzian manifold satisfy an equation formally similar to same equation \eqref{TransportHeat}, with only $L$ replaced by some generalized wave operator (see \citep[p.\ 39]{baerginouxpfaeffle}).

\medskip

Also semiclassical (i.e.\ $\hbar$-dependent) versions of the heat kernel expansion can be dealt with in this approach (compare \cite{baerpfaeffle}, \cite{ludewig}).
\end{example}

\begin{example}[WKB Expansion]
Let $M$ be a Riemannian manifold with Laplace-Beltrami operator $\Delta = \dd^* \dd = -\div \circ \grad$  and let $V$ be a function with a non-degenerate minimum at $p \in M$ and $V(p)=0$. Then the WKB construction produces formal eigenvectors and eigenvalues of the operator $\hbar^2 \Delta + V$ in the limit $\hbar \searrow 0$ (for more details, see for example \cite{simon83}, \cite{dimassisjoestrand99}). These are formal power series in $\hbar$ of the form
\begin{equation} \label{AsymptoticEigenvalues}
  \boldsymbol{u} = e^{-\phi/\hbar}\sum\nolimits_{j=0}^\infty \hbar^j a_j, ~~~~~~ \boldsymbol{\lambda} = \sum\nolimits_{j=0}^\infty \hbar^j \lambda_j
\end{equation}
where $\phi$ is a positive solution of the {\em eiconal equation} $V = |\dd \phi|^2$ with $\phi(p)=0$ and the $a_j$ are required to solve the recursive transport equations
\begin{equation} \label{WKBEquation}
(\partial_X - \Delta \phi)a_j =  \lambda_0a_j - \Delta a_{j-1} + \lambda_1 a_{j-1} + \dots + \lambda_j a_0, ~~~~ j=0, 1, 2, \dots
\end{equation}
where $X = {2 \grad \phi}$. Because $V$ had a non-degenerate local minimum at $p$, we have in suitable normal coordinates $x$ that
\begin{equation*}
  \phi = \frac{1}{2}\sum\nolimits_{j=1}^n \mu_j (x^j)^2 + \dots ~~~~~~ \text{hence} ~~~~~~ X = 2\sum\nolimits_{j=1}^n \mu_j x^j \frac{\partial}{\partial x^j} + \dots,
\end{equation*}
where the numbers $\mu_j^2$ are the eigenvalues of the Hessian of $V$ and the dots indicate terms of higher order in $x$. This shows that the vector field $X$ indeed has a strictly positive source at $p$, so we can apply the Thm.\ \ref{ThmFredholm}. Furthermore, we obtain
\begin{equation*}
  (\Delta \phi) (p) = -(\mu_1 + \dots + \mu_n),
\end{equation*}
so that our equation has the eigenvalues
\begin{equation*}
  \lambda_0 = (2 \alpha_1 + 1)\mu_1 + \dots + (2\alpha_n + 1) \mu_n.
\end{equation*}
If we choose $\lambda_0$ with $m(\lambda_0)=1$ (e.g.\ $\alpha_1 = \dots = \alpha_n = 0$), then the solution space to the equation
\begin{equation*}
  (\partial_X - \Delta \phi)a_0 = \lambda_0 a_0
\end{equation*}
is $1$-dimensional by Thm.\ \ref{ThmFredholm}. In this case, there is a unique choice for all $\lambda_j$, $j>0$ such that the equation \eqref{WKBEquation} is solvable. This is because for $N$ big enough, the spaces $\mathcal{P}_N$ (defined in \ref{NotationPolynomialSpaces}) split into
\begin{equation*}
  \mathcal{P}_N = \mathrm{im}([\partial_X - \Delta \phi]_N - \lambda_0) \oplus \R [a_0]_N,
\end{equation*}
so there is a unique $\lambda_j$ such that the right hand side of \eqref{WKBEquation} lies in $\ker ((\nabla_X + A)^\prime- \lambda_0)_\perp$. Then by Thm.\ \ref{ThmFredholm}, the solution space is a one-dimensional affine space with direction $\R[a_0]$. Hence the coefficients $a_j$ are uniquely determined up to multiples of $a_0$. In total, this means that the space of formal power series \eqref{AsymptoticEigenvalues} fulfilling \eqref{WKBEquation} form a one-dimensional vector space over the field $\R(\!(\hbar)\!)$ of formal Laurent series in $\hbar$.

For the "degenerate case", i.e.\ when $m(\lambda_0)>1$, it is not clear that one can solve the transport equations recursively as in the non-degenerate case, because
\begin{equation*}
\dim \mathrm{Coker} ([\partial_X - \Delta\Phi]_N - \lambda_0) > 1,
\end{equation*}
but as above, one only has one variable $\lambda_j$ to make adjustments. In fact it turns out that the problem may not be solvable as stated above; instead, half-integer powers of $\hbar$ may occur in the expansions \eqref{AsymptoticEigenvalues}. In this case, one should refer to other methods, for example spectral theory over the field $\C(\!(\hbar)\!)$ (see \cite{kleinschwarz}) or FBI transform (compare \citep[2.3.8]{helffer88}).
\end{example}


\appendix

\section{Estimates on Initial Value Problems}

In this appendix, we establish some estimates on the growth of the solutions to initial value problems.

Let $V$ be a finite-dimensional real or complex  vector space with a norm and let $A_0 \in \End(V)$ be a linear map on $V$. The unique matrix solution to the initial value problem
\begin{equation} \label{InitValueConst}
  \dot{E_0}(t) = A_0 E_0(t), ~~~~~~~~ E_0(0) = \id
\end{equation}
is given by the exponential $E_0(t) = e^{t A_0}$. We denote by 
\begin{equation*}
\ell := \ell(A_0) := \min \Re \,\mathrm{spec} (A_0)
\end{equation*}
the smallest real part of the eigenvalues of $A_0$. This is a continuous function of $A_0$ (see \cite[Thm.\ II.5.14]{kato95}). For each $\varepsilon>0$, we have
\begin{equation} \label{EstimateOnE0}
  \|E_0(t)E_0(s)^{-1}\| \leq M e^{(t-s)(\ell-\varepsilon)} ~~~~ \text{for} ~~ t \leq s \leq 0,
\end{equation}
where
\begin{equation} \label{DefinitionOfM}
  M = M(A_0, \varepsilon) := \sup_{t \leq 0} \| \exp\bigl\{t(A_0 - \ell(A_0)+\varepsilon)\bigr\}\|
\end{equation}
is a continuous function of $A_0$ and $\varepsilon$. Note that for each $\nu \in \R$, we have $M(A_0 + \nu, \varepsilon) = M(A_0, \varepsilon)$. 

For the more general case of non-constant $A$, we need the following lemma, which we adapt from \citep[Lemma 1.1]{OrdinaryDiffOps}. For convenience, we repeat the proof here, adapted to our situation, as it is only stated for positive time in the literature.

\begin{lemma} \label{LemmaPerturbation}
Let $E_0$ as above and let $E$ be the solution of the initial value problem
\begin{equation} \label{InitValueProb}
  \dot{E}(t) = A(t)E(t), ~~~~~~~~ E(0) = \id,
\end{equation}
where $A \in C((-\infty, 0], \End(V))$ is a continuous path of matrices. Then for each $\varepsilon >0$ and each $t \leq 0$, we have
\begin{align*}
  \|E(t)\| &\leq M \exp\left(t\bigl(\ell - \varepsilon - M \sup_{t \leq 0}\|A(t) - A_0\|\bigr)\right)
\end{align*}
where  $\ell = \ell(A_0)$ and $M=M(A_0, \varepsilon)$ were defined above.
\end{lemma}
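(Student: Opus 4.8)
The plan is to reduce the estimate to the classical Gronwall inequality, applied ``backwards in time'' by means of a time reversal, using that $E_0(t)E_0(s)^{-1}$ decays as $t \to -\infty$ for $t \leq s \leq 0$ (this is exactly \eqref{EstimateOnE0}).

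First I would write $B(t) := A(t) - A_0$ and view \eqref{InitValueProb} as a perturbation of the constant-coefficient problem \eqref{InitValueConst}. The ansatz $E(t) = E_0(t)C(t)$ leads to $\dot C(t) = E_0(s)^{-1}B(t)E_0(t)C(t)$ with $C(0) = \id$, and integrating (and multiplying back by $E_0(t)$) gives the Volterra integral equation
\[
  E(t) = E_0(t) + \int_0^t E_0(t)E_0(s)^{-1} B(s) E(s)\, \dd s, \qquad t \leq 0,
\]
which can also be checked directly by differentiation together with uniqueness of solutions of linear ODEs. (If $\sup_{t \leq 0}\|A(t) - A_0\| = \infty$ the asserted bound is vacuous, so we may assume $\beta := \sup_{s \leq 0}\|A(s) - A_0\| < \infty$.)

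Next I would take norms. For $t \leq 0$ the variable $s$ in the integral ranges over $t \leq s \leq 0$, which is precisely the regime where \eqref{EstimateOnE0} is available: $\|E_0(t)E_0(s)^{-1}\| \leq M e^{(t-s)(\ell-\varepsilon)}$, and in particular $\|E_0(t)\| \leq M e^{t(\ell-\varepsilon)}$. Pulling the norm inside the integral and rewriting $\int_0^t = -\int_t^0$ yields
\[
  \|E(t)\| \leq M e^{t(\ell-\varepsilon)} + \int_t^0 M e^{(t-s)(\ell-\varepsilon)} \beta\, \|E(s)\|\, \dd s .
\]
Setting $\phi(t) := e^{-t(\ell-\varepsilon)}\|E(t)\|$ turns this into $\phi(t) \leq M + \int_t^0 M\beta\, \phi(s)\, \dd s$ for $t \leq 0$. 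Substituting $\tau = -t \geq 0$ and $\tilde\phi(\tau) := \phi(-\tau)$ converts this backward inequality into the standard forward one, $\tilde\phi(\tau) \leq M + \int_0^\tau M\beta\, \tilde\phi(\sigma)\, \dd\sigma$, to which the usual Gronwall lemma applies and gives $\tilde\phi(\tau) \leq M e^{M\beta\tau}$, i.e.\ $\phi(t) \leq M e^{-M\beta t}$. Therefore
\[
  \|E(t)\| = e^{t(\ell-\varepsilon)}\phi(t) \leq M e^{t(\ell - \varepsilon - M\beta)},
\]
which is the claimed estimate.

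I do not expect a genuine obstacle here; the content of the argument is the time reversal that makes the decay of $E_0(t)E_0(s)^{-1}$ usable in a Gronwall scheme. The only things requiring care are bookkeeping: the orientation of all integrals (everything takes place on $(-\infty,0]$), checking that \eqref{EstimateOnE0} is invoked only in its stated range $t \leq s \leq 0$, and noting that since $A$ is continuous and the equation is linear, $E$ exists and is bounded on each compact subinterval of $(-\infty,0]$, so the Gronwall estimate is legitimate there and hence holds for all $t \leq 0$.
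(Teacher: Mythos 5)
Your proof is correct. You start from the same Volterra integral equation as the paper (the operator $T$ in the paper's proof is exactly your Duhamel formula $E(t) = E_0(t) + \int_0^t E_0(t)E_0(s)^{-1}(A(s)-A_0)E(s)\,\dd s$), but you extract the bound differently: the paper first normalizes by a shift $A \rightsquigarrow A + \nu$, $A_0 \rightsquigarrow A_0 + \nu$ so that $M\sup_{s\le 0}\|A(s)-A_0\| = \ell - \varepsilon$ (using that $M$ and $\|A-A_0\|$ are shift-invariant while $\ell$ shifts), which reduces the claim to the statement $\|E(t)\|\le M$, and then proves that by showing $T$ is a contraction preserving the ball of radius $M$ in $C([t_0,0],\End(V))$ and invoking Banach's fixed point theorem. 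You instead divide out the decaying factor $e^{t(\ell-\varepsilon)}$, reverse time, and apply Gr\"onwall. The two mechanisms are equivalent in strength here; yours is arguably more direct since it avoids the normalization trick (whose justification requires checking how both sides of the inequality transform under the shift, including the implicit rescaling $E \rightsquigarrow e^{\nu t}E$), while the paper's fixed-point formulation has the minor advantage of simultaneously furnishing existence of $E$ on $[t_0,0]$, which you instead take from the standard theory of linear ODEs. Your bookkeeping (orientation of integrals, using \eqref{EstimateOnE0} only for $t\le s\le 0$, disposing of the case $\sup_{s\le 0}\|A(s)-A_0\|=\infty$) is all in order.
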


\begin{proof}
First note that we may assume w.l.o.g.\ that 
\begin{equation*}
  M(A_0, \varepsilon)  \sup_{t \leq 0}\|A(t) - A_0\| = \ell(A_0) - \varepsilon,
\end{equation*}
because the left side is invariant under the change $A \rightsquigarrow A + \nu$, $A_0 \rightsquigarrow A_0 + \nu$, while the right side increases by $\nu$. Now fix $t_0<0$ and consider the operator on $C([t_0, 0], \End(V))$ defined by
\begin{equation*}
  TF(t) = E_0(t) -  E_0(t) \int_t^0 E_0(s)^{-1} (A(s) - A_0) F(s) \dd s, ~~~~~ t \in [t_0, 0]
\end{equation*}
It is easy to verify that $T$ is a contraction and maps the closed ball around zero of radius $M$ in $C([t_0, 0], \End(V))$ (with respect to the $C^0$-norm) to itself, while on the other hand, $E$ is a fixed point of $T$ (as is easily verified by direct calculation, the solution $E$ fulfills the integral equation $TE(t) = E(t)$). Now by Banach's Fixed Point Theorem, there is exactly one fixed point and the fixed point must lie in the Ball of radius $M$. Hence $\|E(t)\| \leq M$ for all $t\leq 0$, which is the proposition under our initial assumption.
\end{proof}

The above lemma shows how to estimate the solution of \eqref{InitValueProb} by comparing it with $A_0$. However, we are interested in an estimate with the {\em same} exponent as in \eqref{EstimateOnE0}. This is the content of the following theorem.

\begin{theorem} \label{ThmEstimateLemma}
Let $A \in C((-\infty, 0], \End(V))$, choose $\varepsilon > 0$ and assume that there exists $A_0 \in \End(V)$ and $t_0 \leq 0$ such that
\begin{equation*}
  \|A(t) - A_0\| < \frac{\varepsilon/2}{M(A_0, \varepsilon/2)} ~~~~~~ \text{for all}~~ t \leq t_0.
\end{equation*}
Then we have the estimate
\begin{equation*}
  \|E(t)\| < C e^{t(\ell(A_0)-\varepsilon)}.
\end{equation*}
for the solution of the initial value problem \eqref{InitValueProb}, where
\begin{equation*}
  C = M(A_0, \varepsilon/2) M(A_0, \varepsilon) \exp\left(-t_0 M(A_0, \varepsilon)\sup_{t \leq 0}\|A(t) - A_0\|\right)
\end{equation*}
\end{theorem}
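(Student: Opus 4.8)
The plan is to bootstrap from Lemma~\ref{LemmaPerturbation}, which controls $\|E(t)\|$ with the exponent $\ell(A_0)-\varepsilon - M(A_0,\varepsilon)\sup_{t\le 0}\|A(t)-A_0\|$, an exponent that is spoiled by the (possibly large) sup over \emph{all} negative times. The idea is to split the time axis at $t_0$: on $(-\infty, t_0]$ the hypothesis gives us a genuinely small perturbation $\|A(t)-A_0\| < \tfrac{\varepsilon/2}{M(A_0,\varepsilon/2)}$, so Lemma~\ref{LemmaPerturbation} applied on that half-line (with $\varepsilon/2$ in place of $\varepsilon$) yields a clean exponential bound with exponent exactly $\ell(A_0)-\varepsilon/2 - M(A_0,\varepsilon/2)\cdot\tfrac{\varepsilon/2}{M(A_0,\varepsilon/2)} = \ell(A_0)-\varepsilon$; on the compact interval $[t_0,0]$ we only need a crude uniform bound, which will be absorbed into the constant $C$.

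Concretely, first I would apply Lemma~\ref{LemmaPerturbation} directly on the full line to get the bound $\|E(t)\| \le M(A_0,\varepsilon)\exp\!\bigl(-t_0 M(A_0,\varepsilon)\sup_{t\le0}\|A(t)-A_0\|\bigr)$ valid for $t\in[t_0,0]$, i.e.\ a constant controlling $E$ up to time $t_0$; call its value $C_1$, and note $C_1 \ge \|E(t_0)\|$. Then for $t \le t_0$ I would write $E(t) = \tilde E(t)\,E(t_0)$, where $\tilde E$ solves $\dot{\tilde E}(s) = A(s)\tilde E(s)$ with the ``initial'' condition $\tilde E(t_0) = \id$; shifting time by $t_0$ this is exactly the setup of Lemma~\ref{LemmaPerturbation} for the path $s\mapsto A(s+t_0)$ restricted to $s\le 0$, whose sup-norm distance to $A_0$ is $<\tfrac{\varepsilon/2}{M(A_0,\varepsilon/2)}$ by hypothesis. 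Applying that lemma with $\varepsilon/2$ gives $\|\tilde E(t)\| \le M(A_0,\varepsilon/2)\,e^{(t-t_0)(\ell(A_0)-\varepsilon)}$ for $t\le t_0$. Combining, $\|E(t)\| \le M(A_0,\varepsilon/2)\,C_1\,e^{-t_0(\ell(A_0)-\varepsilon)}\,e^{t(\ell(A_0)-\varepsilon)}$ for $t\le t_0$, and one checks the prefactor here equals the claimed $C$ (up to the harmless factor $e^{-t_0(\ell-\varepsilon)}$, which should be folded in — or one argues $\ell(A_0)\ge\varepsilon$ may be assumed after the shift $A\rightsquigarrow A+\nu$ that leaves everything invariant, exactly as in the proof of Lemma~\ref{LemmaPerturbation}). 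For $t\in[t_0,0]$ the estimate $\|E(t)\| \le C_1 \le C e^{t(\ell(A_0)-\varepsilon)}$ holds because $e^{t(\ell(A_0)-\varepsilon)}$ is bounded below on the compact interval and $C$ carries the factor $M(A_0,\varepsilon/2)\ge 1$.

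The main obstacle is bookkeeping rather than conceptual: making sure the two regimes glue with a single constant $C$ of the stated form, and in particular tracking how the normalizing shift $A\rightsquigarrow A+\nu$ (used to reduce to $\ell(A_0)=\varepsilon$, or to make the exponent nonpositive on $[t_0,0]$) interacts with the claim, using the invariance $M(A_0+\nu,\varepsilon)=M(A_0,\varepsilon)$ noted after \eqref{DefinitionOfM}. One must also be slightly careful that Lemma~\ref{LemmaPerturbation} as stated starts the flow at time $0$, so the decomposition $E(t)=\tilde E(t)E(t_0)$ and the time-shift $s\mapsto s+t_0$ are what reduce the tail $(-\infty,t_0]$ to the hypotheses of that lemma; this is the only genuinely new observation, everything else is substitution into the exponential bound already proved.
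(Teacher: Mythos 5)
Your proposal is correct and follows essentially the same route as the paper: decompose $E(t)=\tilde E(t)E(t_0)$ at the time $t_0$ where the perturbation becomes small, apply Lemma~\ref{LemmaPerturbation} with $\varepsilon/2$ to the time-shifted flow on the tail $(-\infty,t_0]$ so that the loss $M(A_0,\varepsilon/2)\sup\|A-A_0\|<\varepsilon/2$ is absorbed into the exponent, and bound $\|E(t_0)\|$ crudely by the same lemma with $\varepsilon$, yielding exactly the stated constant $C$. Your extra care about the interval $[t_0,0]$ and the normalizing shift $A\rightsquigarrow A+\nu$ is sound and if anything slightly more complete than the paper's own bookkeeping.
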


\begin{proof}
For $t_0 \leq 0$ fixed, let $\tilde{E}$ be the solution of the initial value problem
\begin{equation*}
  \dot{\tilde{E}}(t) = A(t + t_0) \tilde{E}(t), ~~~~~~~~ \tilde{E}(0) = \id.
\end{equation*}
Then $E(t) = \tilde{E}(t - t_0){E}(t_0)$. From Lemma \ref{LemmaPerturbation}, we get
\begin{equation*}
  \| \tilde{E}(t)\| \leq M(A_0, \varepsilon/2) \exp\Bigl\{t\bigl(\ell(A_0) - \varepsilon/2 - M(A_0, \varepsilon/2) \|A(t+t_0) - A_0\|\bigr)\Bigr\} < M e^{t(\ell-\varepsilon)}.
\end{equation*}
Therefore,
\begin{align*}
  \|E(t)\| \leq \|\tilde{E}(t - t_0)\| \| {E}(t_0)\| < \| {E}(t_0) \| M e^{-t_0(\ell - \varepsilon)} e^{ t(\ell - \varepsilon )}.
\end{align*}
Finally, $\|E(t_0)\|$ can be estimated as in \ref{LemmaPerturbation}, giving
\begin{equation*}
  C = M(A_0, \varepsilon/2) M(A_0, \varepsilon) \exp\Bigl(-t_0 M(A_0, \varepsilon) \|A - A_0\|_{C_0}\Bigr).
\end{equation*}
\end{proof}

\begin{remark} \label{RemarkInverse}
One easily checks that the inverse $F(t):= E(t)^{-1}$ fulfills the problem
\begin{equation*}
  \dot{F}(t) = - F(t) A(t), ~~~~~~~~ F(0) = \id.
\end{equation*}
Under similar assumptions as in the above theorem, we obtain that
\begin{equation*}
  | F(t) w| \leq C e^{t ( \ell(-A_0) - \varepsilon )}|w|
\end{equation*}
for all $w \in V$ and hence, be replacing $w \rightsquigarrow E(t) w$, we get that
\begin{equation*}
  |E(t)w| \geq \frac{1}{C} e^{t ( - \ell(-A_0) + \varepsilon )}|w|
\end{equation*}
for all $w \in V$. In fact, $-\ell(-A_0) = \max \Re \, \mathrm{spec} \, A_0$.
\end{remark}

\bibliography{Literatur}

\begin{thebibliography}{BGV04}


\providecommand{\url}[1]{\texttt{#1}}
\expandafter\ifx\csname urlstyle\endcsname\relax
  \providecommand{\doi}[1]{doi: #1}\else
  \providecommand{\doi}{doi: \begingroup \urlstyle{rm}\Url}\fi

\bibitem[AM03]{OrdinaryDiffOps}
\textsc{Abbondandolo}, Alberto ; \textsc{Majer}, Pietro:
\newblock Ordinary differential operators in Hilbert spaces and Fredholm pairs.
\newblock {In: }\emph{Mathematische Zeitschrift}  (2003), Nr. 243, S. 525--562

\bibitem[BGP07]{baerginouxpfaeffle}
\textsc{B{\"a}r}, Christian ; \textsc{Ginoux}, Nicolas  ; \textsc{Pf{\"a}ffle},
  Frank:
\newblock \emph{Wave Equations on Lorentzian Manifolds and Quantization}.
\newblock Z{\"u}rich : European Mathematical Society Publishing House, 2007

\bibitem[BGV04]{berlinegetzlervergne}
\textsc{Berline}, Nicole ; \textsc{Getzler}, Ezra  ; \textsc{Vergne}, Michele:
\newblock \emph{Heat Kernels and Dirac Operators}.
\newblock Berlin, Heidelberg, New York : Springer, 2004

\bibitem[BP10]{baerpfaeffle}
\textsc{B{\"a}r}, Christian ; \textsc{Pf{\"a}ffle}, Frank:
\newblock Asymptotic Heat Kernel Expansion In the Semi-Classical Limit.
\newblock {In: }\emph{Communications in Mathematical Physics} 294 (2010), S.
  731--744

\bibitem[DS99]{dimassisjoestrand99}
\textsc{Dimassi}, Mouez ; \textsc{Sj{\"o}strand}, Johannes:
\newblock \emph{Spectral Asymptotics in the Semi-Classical Limit}.
\newblock Cambridge : Cambridge University Press, 1999

\bibitem[Ham82]{hamilton}
\textsc{Hamilton}, Richard:
\newblock The inverse function theorem of Nash and Moser.
\newblock {In: }\emph{Bulletin of the American Mathematical Society} 7 (1982),
  Nr. 1, S. 65 -- 222

\bibitem[Hel88]{helffer88}
\textsc{Helffer}, Bernard:
\newblock \emph{Semi-Classical Analysis for the Schr{\"o}dinger Operator and
  Applications}.
\newblock Berlin, Heidelberg, New York : Springer, 1988

\bibitem[HS84]{helffersjostrand84}
\textsc{Helffer}, Bernard ; \textsc{Sj{\"o}strand}, Johannes:
\newblock Multiple wells in the semiclassical limit I.
\newblock {In: }\emph{Communications in Partial Differential Equations} 9
  (1984), Nr. 4, S. 337--408

\bibitem[Kat95]{kato95}
\textsc{Kato}, Tosio:
\newblock \emph{Perturbation Theory of Linear Operators}.
\newblock Berlin, Heidelberg, New York : Springer, 1995

\bibitem[KS90]{kleinschwarz}
\textsc{Klein}, Markus ; \textsc{Schwarz}, Erika:
\newblock An elementary approach to formal WKB expansions in $\mathbb{R}^n$.
\newblock {In: }\emph{Rev. Math. Phys.} 02 (1990), S. 441--456

\bibitem[Lud13]{ludewig}
\textsc{Ludewig}, Matthias:
\newblock \emph{A semiclassical Proof of the Poincare-Hopf Theorem}.
\newblock http://arxiv.org/abs/1302.6895, 2013

\bibitem[Nar85]{narashimhan1985}
\textsc{Narasimhan}, R.:
\newblock \emph{Analysis on real and complex Manifolds}.
\newblock Amsterdam, New York, Oxford : North-Holland, 1985

\bibitem[Per91]{perko91}
\textsc{Perko}, Lawrence:
\newblock \emph{Differential Equations and Dynamical Systems}.
\newblock 3rd Edition.
\newblock Berlin, Heidelberg, New York : Springer, 1991

\bibitem[Sim83]{simon83}
\textsc{Simon}, Barry:
\newblock Semiclassical analysis of low lying eigenvalues. I. Non-degenerate
  minima: asymptotic expansions.
\newblock {In: }\emph{Ann. Inst. Henri Poincar{\'e}} XXXVIII (1983), Nr. 3, S.
  295--307

\bibitem[Ste57]{sternberg57}
\textsc{Sternberg}, Shlomo:
\newblock Local Contractions and a Theorem of Poincare.
\newblock {In: }\emph{American Journal of Mathematics} 79 (1957), Nr. 4, S.
  809--824

\end{thebibliography}

\end{document}